\def\theequation{\thesection.\@arabic \c@equation}
\def\@citecolor{cyan}
\def\@linkcolor{cyan}
\def\@urlcolor{cyan}
\theoremstyle{plain}
\newtheorem{theorem}[equation]{Theorem}
\newtheorem{lemma}[equation]{Lemma}
\newtheorem{corollary}[equation]{Corollary}
\newtheorem{proposition}[equation]{Proposition}
\theoremstyle{definition}
\newtheorem{question}[equation]{Question}
\newtheorem{remark}[equation]{Remark}
\newenvironment{remarkbox}[1][]{%
    \begin{remark}[#1] \pushQED{\qed}}{\popQED \end{remark}}
\newtheorem{example}[equation]{Example}
\newtheorem{definition}[equation]{Definition}
\newtheorem{notation}[equation]{Notation}
\newtheorem{discussion}[equation]{Discussion}
\newtheorem{observation}[equation]{Observation}
\newtheorem{construction}[equation]{Construction}
\newcommand{\calA}{\mathcal A}
\newcommand{\bfa}{\mathbf a}
\newcommand{\fraka}{{\mathfrak a}}
\newcommand{\bfb}{\mathbf b}
\newcommand{\calD}{\mathcal D}
\newcommand{\calF}{\mathcal F}
\newcommand{\calG}{\mathcal G}
\newcommand{\bfh}{\mathbf h}
\newcommand{\bfi}{\mathbf i}
\newcommand{\calL}{\mathcal L}
\newcommand{\frakm}{{\mathfrak m}}
\newcommand{\frakn}{{\mathfrak n}}
 \let\strSh\scrO
\newcommand{\frakp}{{\mathfrak p}}
\newcommand{\naturals}{\mathbb{N}}
\newcommand{\ints}{\mathbb{Z}}
\def\to{\longrightarrow}
\DeclareMathOperator{\affine}{\mathbb{A}}
\DeclareMathOperator{\projective}{\mathbb{P}}
\DeclareMathOperator{\height}{ht}
\DeclareMathOperator{\codim}{codim}
\DeclareMathOperator{\Hom}{Hom}
\DeclareMathOperator{\Tor}{Tor}
\DeclareMathOperator{\Frac}{Frac}
\DeclareMathOperator{\Spec}{Spec}
\DeclareMathOperator{\Proj}{Proj}
\DeclareMathOperator{\homology}{H}
\newcommand{\minus}{\ensuremath{\smallsetminus}}
\DeclareMathOperator{\image}{Im}
\DeclareMathOperator{\Ext}{Ext}
\DeclareMathOperator{\Tr}{Tr}
\newcommand{\sheafHom}{\mathcal{H}om}
\newcommand{\sheafExt}{\mathcal{E}xt}
\DeclareMathOperator{\Tot}{Tot}
\def\RDerChar{\mathbf{R}\!}
\def\RDer{\@ifnextchar[{\R@Der}{\ensuremath{\RDerChar}}}
\def\R@Der[#1]{\ensuremath{\RDerChar^{#1}}}
\newcommand{\GL}{\mathrm{GL}}
\title{Generators of top cohomology}
\author{Manoj Kummini}
\address{Chennai Mathematical Institute, Siruseri, Tamilnadu 603103. India}
\email{mkummini@cmi.ac.in}
\author{Mohit Upmanyu}
\address{Chennai Mathematical Institute, Siruseri, Tamilnadu 603103. India}
\email{mohit@cmi.ac.in}
\begin{document}

\begin{abstract}
Let $R$ be a commutative noetherian ring and $f: X \to \Spec R$ a proper
smooth morphism, of relative dimension $n$.
From Hartshorne, \emph{Residues and Duality}, Springer, 1966, one knows
that the trace map
$\Tr_f : \homology^n(X, \omega_{X/R}) \to R$
is an isomorphism when $f$ has geometrically connected
fibres. 
We construct an exact sequence that generates 
$\Ext_X^n(\strSh_X, \omega_{X/R}) = \homology^n(X, \omega_{X/R})$ as an
$R$-module in the following cases:

\begin{enumerate}

\item
when $R$ is a DVR and $f$ has a section;

\item
when $R=\ints$ and $X$ is the Grassmannian $G_{2,m}$ for some $m \geq 4$.
\end{enumerate}
This partially answers a question raised by Lipman.
\end{abstract}
\maketitle

\section{Introduction}

J.~Lipman posed the following question (in private communication to the
first author).
Let $R$ be a (commutative) noetherian ring and $f: X \to \Spec R$ a proper
smooth morphism, of relative dimension $n$.
Write $\omega_{X/R} := \bigwedge^n \Omega_{X/R} = f^!R[-n]$, the relative
dualizing sheaf.
By the duality theorem for proper 
morphisms~\cite[Chapter VII, Theorem~3.3]{HartRD66}
we get an isomorphism
\[
\RDer f_* \RDer  \sheafHom_X(\omega_{X/R}, \omega_{X/R}[n])
= \RDer \Hom_R(\RDer f_*\omega_{X/R}, R).
\]
Since $f$ is smooth, we also have a trace 
map~\cite[Chapter VII, Theorem~4.1]{HartRD66}
\[
\Tr_f : \homology^n(X, \omega_{X/R}) \to R
\]
which is an isomorphism when $f$ has geometrically connected
fibres~\cite[Chapter III, Theorem~11.2(g)]{HartRD66}.

\begin{question}[Lipman]
\label{question:lipman}
Can $\Tr_f^{-1}(1_R)$ be represented by a canonical exact sequence 
\[
0 \to \omega_{X/R} \to F_{n} \to \cdots \to F_{1} \to \strSh_X \to 0
\in 
\Ext_X^n(\strSh_X, \omega_{X/R}) = 
\homology^n(X, \omega_{X/R})?
\]
\end{question}

For example, if $X = \projective^n_R$, then the (exact) Koszul complex
\[
0 \to \omega_{X/R} \to \strSh_X(-n)^{n+1} \to \cdots \to 
\strSh_X(-1)^{n+1} \to \strSh_X \to 0
\]
generates $\Ext_X^n(\strSh_X, \omega_{X/R})$.
One obtains this Koszul complex as follows.
We have the Euler exact sequence (with $X = \projective^n_R$)
\[
0 \to \Omega_{X/R} \to 
\strSh_X(-1)^{n+1} \to \strSh_X \to 0
\]
This corresponds to a global section $s$ of 
$\strSh_X(+1)^{n+1}$, which defines a chain complex on $\bigwedge^* 
\strSh_X(-1)^{n+1}$. The exact Koszul complex given above is obtained when
we take $s$ is given by $(x_0, \ldots, x_n)$ where the $x_i$ are 
the homogeneous coordinates of $\projective^n_R$.
See~\cite[Theorem~17.5]{eiscommalg} for the relation between the cycles in
the Koszul complex and the exterior powers of 
$\Omega_{X/R}$.

In general when $X$ is a smooth connected 
projective scheme over $R$ with a closed
embedding $i : X \to \projective^N_R$, then we could consider the pull-back
of the above Euler sequence on $\projective^N_R$.
\[
\xymatrix{
0 \ar[r] & i^* \Omega_{\projective^N_R/R} \ar[r] \ar@{>>}[d]& 
i^* \strSh_{\projective^N_R}(-1)^{N+1}  \ar[r]^-{i^* s} \ar@{>>}[d]& 
i^* \strSh_{\projective^N_R}   \ar[r] \ar@{=}[d]& 0
\\
0 \ar[r] & \Omega_{X/R} \ar[r] &  \calF \ar[r]^-s &  \strSh_X \ar[r] &  0
}
\]
Here $\calF$ is the push-out and we denote the resulting map $\calF \to
\strSh_X$ also by $s$.
It is natural to consider whether the Koszul complex on $X$ given by $s$ is
a generator of 
$\Ext_X^n(\strSh_X, \omega_{X/R})$.
In Section~\ref{section:EulerKoszul}, we show that this approach fails
when $R = \ints$ and $X =G_{2,4}$. 
We show, more generally, that no element of 
$\Ext_X^1(\strSh_X, \Omega_{X/\ints})$ will give a generator of 
$\Ext_X^n(\strSh_X, \omega_{X/\ints})$.

However, in this paper, we directly describe a generator of 
$\Ext_X^n(\strSh_X, \omega_{X/R})$, in the following cases.

\begin{theorem}
\label{theorem:dvr}
Question~\ref{question:lipman} has a positive answer if $R$ is a DVR 
and $f : X \to \Spec R$ is a smooth projective morphism, with a section
$\Spec R \to X$.
\end{theorem}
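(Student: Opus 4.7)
The strategy is to build the required $n$-extension by dualizing a locally free resolution of $\sigma_*\scrO_Z$, where $Z := \sigma(\Spec R) \subset X$. Since $X$ is smooth of relative dimension $n$ over $R$ and $Z \cong \Spec R$ is smooth of relative dimension $0$, the embedding $\sigma : Z \hookrightarrow X$ is a regular closed immersion of codimension $n$, with $\omega_{Z/R} = \scrO_Z$. The local-duality formula for regular embeddings gives $\sheafExt^j_X(\sigma_*\scrO_Z, \omega_{X/R}) = 0$ for $j \ne n$ and $\sheafExt^n_X(\sigma_*\scrO_Z, \omega_{X/R}) \cong \sigma_*\omega_{Z/R} \cong \sigma_*\scrO_Z$. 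Since $Z$ is affine, the local-to-global Ext spectral sequence collapses to yield $\Ext^n_X(\sigma_*\scrO_Z, \omega_{X/R}) \cong R$ with a canonical generator $1_R$. Multiplicativity of the Grothendieck trace applied to $f \circ \sigma = \mathrm{id}_{\Spec R}$ should then imply that the image of this generator under the map $\Ext^n_X(\sigma_*\scrO_Z, \omega_{X/R}) \to \Ext^n_X(\scrO_X, \omega_{X/R})$ induced by $\scrO_X \twoheadrightarrow \sigma_*\scrO_Z$ is $\Tr_f^{-1}(1_R)$.

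To exhibit this class as an explicit $n$-extension, I would observe that $\sigma_*\scrO_Z$ has local projective dimension $n$ at every point of its support: at $P \in Z$, the local ring $\scrO_{X,P}$ has depth $n+1$ while $(\sigma_*\scrO_Z)_P$ has depth $1$, so Auslander--Buchsbaum yields $\mathrm{pd} = n$. Combined with Serre's theorem (using projectivity of $X$ over $R$), starting with the natural surjection $\scrO_X \twoheadrightarrow \sigma_*\scrO_Z$, resolving the kernel $\scrI_Z$ by locally free sheaves, and truncating produces a resolution
$$0 \to \calE_n \to \calE_{n-1} \to \cdots \to \calE_1 \to \scrO_X \to \sigma_*\scrO_Z \to 0.$$
Applying $\sheafHom(-, \omega_{X/R})$ and using the vanishing of $\sheafExt^{<n}_X(\sigma_*\scrO_Z, \omega_{X/R})$ yields the exact sequence
$$0 \to \omega_{X/R} \to \calE_1^\vee \otimes \omega_{X/R} \to \cdots \to \calE_n^\vee \otimes \omega_{X/R} \to \sigma_*\scrO_Z \to 0,$$
an $n$-extension representing the canonical generator. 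Splicing at the right with $0 \to \scrI_Z \to \scrO_X \to \sigma_*\scrO_Z \to 0$ via pullback along the last surjection produces the desired $n$-extension of $\scrO_X$ by $\omega_{X/R}$.

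The main technical hurdle is the compatibility statement in the first paragraph: rigorously identifying the image in $\Ext^n_X(\scrO_X, \omega_{X/R})$ of the canonical generator with $\Tr_f^{-1}(1_R)$. In principle this is multiplicativity of the Grothendieck trace for $f \circ \sigma = \mathrm{id}_{\Spec R}$, but carrying it out cleanly requires matching Hartshorne's abstract construction of $\Tr_f$ with the local-duality isomorphism along $\sigma$. Everything else --- the existence of the length-$n$ resolution, its dualization, and the splicing --- is formal given the regular-embedding structure and the projectivity of $X$.
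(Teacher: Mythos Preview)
You have correctly identified both the key idea (pass through $\Ext^n_X(\strSh_Z,\omega_X)$ and pull back along $\strSh_X \twoheadrightarrow \strSh_Z$) and the genuine gap (why the pulled-back class is a generator). The paper does \emph{not} close this gap via trace compatibility; indeed it only claims to produce \emph{some} generator, not $\Tr_f^{-1}(1_R)$, and explicitly leaves the sharper statement open (Question~\ref{question:lipman2}). Instead the paper shows directly that $\Ext^n_X(\strSh_Z,\omega_X) \to \Ext^n_X(\strSh_X,\omega_X)$ is an isomorphism by proving $\Ext^n_X(I_Z,\omega_X)=0$ (Corollary~\ref{corollary:Extn}). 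The tool is a universal-coefficients short exact sequence (Lemma~\ref{lemma:UnivCoeff}) expressing $\Ext^{n}_X(F,\omega_X)$ in terms of $\Hom_R(\homology^0(X,F),R)$ and $\Ext^1_R(\homology^{1}(X,F),R)$; for $F=I_Z$ one has $\homology^0(X,I_Z)=0$ and $\homology^1(X,I_Z)$ free over $R$ (Lemma~\ref{lemma:IZfree}), so both terms vanish. Both hypotheses are used substantively here: the section forces $\homology^0(X,\strSh_X)\to\homology^0(Z,\strSh_Z)$ to be an isomorphism, and freeness of $\homology^1$ follows from torsion-freeness only because $R$ is a PID. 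This elementary route avoids the delicate matching of $\Tr_f$ with local duality along $\sigma$ that you flagged.

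On the construction side your approach is actually simpler than the paper's. You resolve $\strSh_Z$ by an abstract length-$n$ locally free complex and dualize (that the dual represents a generator of $\Ext^n_X(\strSh_Z,\omega_X)$ is immediate, since the dual complex is itself a locally free resolution of $\sheafExt^n(\strSh_Z,\omega_X)\cong\strSh_Z$, so its Yoneda class is computed by $\mathrm{id}$ on the last term). The paper instead invests real work in building an explicit Koszul resolution: it constructs hypersurface sections $\tilde f_1,\ldots,\tilde f_n$ cutting out a flat relative-dimension-zero complete intersection $X_n\supseteq Z$ with $Z$ a connected component (Propositions~\ref{proposition:div} and~\ref{proposition:Zcomp}), takes the dual Koszul complex $\widetilde K_\bullet$ on the associated rank-$n$ bundle, and pulls back along $\strSh_Z \hookrightarrow \omega_{X_n}$ (diagram~\eqref{equation:pullbackOfExt}). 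This buys an explicit answer whose terms are exterior powers of a named bundle, closer in spirit to the Euler--Koszul model on $\projective^n$, at the cost of constructing $X_n$. Either construction feeds equally well into the vanishing argument for $\Ext^n_X(I_Z,\omega_X)$, so if you are content with an abstract resolution, your shortcut combined with the paper's Lemmas~\ref{lemma:IZfree}--\ref{lemma:UnivCoeff} gives a complete proof.
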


\begin{theorem}
\label{theorem:grass}
Question~\ref{question:lipman} has a positive answer when $R = \ints$ and
$X$ is the Grassmannian $G_{2,m}$, $m \geq 4$.
\end{theorem}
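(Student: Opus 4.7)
The plan is to produce the required $n$-extension by a Koszul complex, in direct analogy with the Euler Koszul complex on $\projective^n_R$ recalled in the introduction. Write $G := G_{2,m}$, set $n := \dim G = 2(m-2)$, and let $\strSh_G(1)$ denote the Plücker line bundle, canonically identified with both $\det \calQ$ and $\det \calS^\vee$; with this convention $\omega_{G/\ints} = \strSh_G(-m)$. The strategy is to produce a locally free sheaf $E$ on $G$ of rank $n+1$ together with a nowhere-vanishing global section $s \in H^0(G, E)$, such that $\det E = \strSh_G(m) = \omega_{G/\ints}^{-1}$. The Koszul complex $\bigwedge^\bullet E^\vee$ with differential given by contraction against $s$ is then acyclic and yields the desired exact sequence
\[
0 \to \omega_{G/\ints} \to \bigwedge^{n} E^\vee \to \cdots \to E^\vee \to \strSh_G \to 0.
\]

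For the construction of $(E, s)$ I fix a rank-$2$ direct summand $W_0 \subset \ints^m$, say $W_0 = \ints e_1 \oplus \ints e_2$, with complement $W_0' = \ints e_3 \oplus \cdots \oplus \ints e_m$. Take
\[
E := (\calS^\vee)^{\oplus(m-2)} \oplus \strSh_G(2),
\]
so that $E$ has rank $2(m-2)+1 = n+1$ and $\det E = (\det \calS^\vee)^{m-2} \otimes \strSh_G(2) = \strSh_G(m-2) \otimes \strSh_G(2) = \strSh_G(m)$, as required. Set $s = (s_1, s_2)$, where $s_1 \in H^0(G, \calS^\vee \otimes \strSh_G^{\oplus(m-2)})$ is the composition $\calS \hookrightarrow \strSh_G^m = (W_0 \oplus W_0') \otimes \strSh_G \twoheadrightarrow W_0' \otimes \strSh_G$, and $s_2 = p_{W_0}^{\otimes 2} \in H^0(G, \strSh_G(2))$ is the square of the Plücker coordinate dual to $W_0$. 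One checks that $Z(s_1) = \{[W_0]\}$ as a reduced closed point (locally near $[W_0]$, the map $s_1$ restricts to the identity on $\Hom(W_0, W_0') = N_{[W_0]/G}$) and $s_2([W_0]) = 1$, so $Z(s) = Z(s_1) \cap V(s_2) = \emptyset$; hence $s$ is nowhere vanishing and the Koszul complex is exact.

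It remains to verify that the Ext class $\xi \in \Ext^n_G(\strSh_G, \omega_{G/\ints}) \cong \ints$ represented by this sequence satisfies $\Tr_f(\xi) = 1$. I decompose the Koszul complex as a tensor product $K(s) = K(s_1) \otimes K(s_2)$: here $K(s_1)$ is a free resolution of $\strSh_{[W_0]}$ since $s_1$ is a regular section with isolated zero, while $K(s_2) = [\strSh_G(-2) \xrightarrow{s_2} \strSh_G]$ is locally acyclic near $[W_0]$ because $s_2$ is a unit there. Combining this decomposition with Grothendieck local duality for the closed immersion $\iota\colon [W_0] \hookrightarrow G$, one recognizes $\xi$ as the Yoneda pullback along $\strSh_G \twoheadrightarrow \strSh_{[W_0]}$ of the canonical generator of $\Ext^n_G(\strSh_{[W_0]}, \omega_{G/\ints}) \cong H^0([W_0], \omega_{[W_0]/\ints}) = \ints$, rescaled by the unit $s_2([W_0]) = 1$. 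Functoriality of $\Tr_f$ under $\iota$ then gives $\Tr_f(\xi) = 1$.

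The main obstacle will be this trace identification, where one must rigorously control the interaction of the two Koszul factors and the normalizations entering Grothendieck duality. The cleanest route is by base change to each residue field $k$ of $\ints$: the same construction at $[W_0] \in G(k)$ yields a class whose local analysis, via classical Serre duality for Grassmannians over a field, recovers the trace generator of $H^n(G_k, \omega_{G_k/k}) \cong k$ up to the unit $s_2([W_0]) \in k^\times$. Since $G$ is smooth and proper over $\ints$ with geometrically connected fibres, $\Tr_f$ is an isomorphism $H^n(G, \omega_{G/\ints}) \cong \ints$, and flatness forces the integer $\Tr_f(\xi)$ to be a unit modulo every prime, hence $\pm 1$; a sign-convention check pins it down to $+1$, yielding the desired $\xi = \Tr_f^{-1}(1)$.
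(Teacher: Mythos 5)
Your construction is genuinely different from the paper's: instead of cutting $G_{2,m}$ by the $n$ hyperplane sections $l_k$ (whose zero-dimensionality requires the Schubert-variety analysis of Proposition~\ref{proposition:G2nFld}) and then dualizing the Koszul complex as in diagram~\eqref{equation:pullbackOfExt}, you use the rank-$n$ bundle $(\calS^\vee)^{\oplus(m-2)}$, whose top Chern class is the point class, so that the natural section $s_1$ has a single reduced $\ints$-point $[W_0]$ as its zero scheme, and you then pad by $\strSh_G(2)$ with the section $p_{W_0}^{\otimes 2}$ to force $\det E = \omega_G^{-1}$. That part of the construction is sound and arguably cleaner than the paper's. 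However, the proof has a genuine gap exactly where you flag ``the main obstacle'': the assertion that the Yoneda class of the exact Koszul complex $\bigwedge^\bullet E^\vee$ equals the pullback along $\strSh_G \twoheadrightarrow \strSh_{[W_0]}$ of the canonical generator of $\Ext^n_G(\strSh_{[W_0]},\omega_G)$, rescaled by $s_2([W_0])$. Phrases like ``one recognizes $\xi$ as\dots'' and ``local analysis via classical Serre duality recovers\dots'' are not arguments, and this identification is precisely the point that cannot be taken on faith: Section~\ref{section:EulerKoszul} of the paper shows that the superficially identical recipe --- an exact Koszul complex on a rank-$(n+1)$ bundle with nowhere-vanishing section and determinant $\omega_X^{-1}$, coming from the pulled-back Euler sequence on $G_{2,4}$ --- produces an \emph{even} multiple of the generator. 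The class of such a Koszul complex is a sum of local contributions weighted by local trivializations, and these can cancel or fail to be units; your situation is better because $Z(s_1)$ is a single point, but that advantage has to be converted into a proof, e.g.\ by carefully comparing the double complex $K(s_1)\otimes K(s_2)$ with the augmented Koszul complex of $s_1$ and the trivialization $p_{W_0}^2|_{[W_0]}$, or by verifying $\homology^i(G_{\Bbbk},\bigwedge^i E^\vee)=0$ for $1\le i\le n$ over every residue field $\Bbbk$ so that the composite of connecting maps $\homology^0(\strSh)\to\homology^n(\omega)$ is surjective. Neither is done, and the second is a nontrivial cohomology computation (Bott-type vanishing for $\Schur^\lambda\calS$ twists, which is delicate in positive characteristic).

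The paper's strategy is designed to sidestep exactly this normalization problem: it applies $\sheafHom(-,\omega_X)$ to the Koszul resolution so that the complex ends in $\omega_{X_n}$, splits off the distinguished component $Z=\Spec R$, and pulls back along the \emph{canonical} isomorphism $\strSh_Z\cong\omega_Z$, so that Lemma~\ref{lemma:KoszulGen} plus the $\Ext$-computations of Section~\ref{section:strategy} (Proposition~\ref{proposition:toprowPID}) immediately give a generator with no residue or sign bookkeeping. If you completed the missing identification, your argument would also have to reprove the content of Corollary~\ref{corollary:Extn} (that $\Ext^n_G(\strSh_G,\omega_G)\cong\ints$ and that the map from $\Ext^n_G(\strSh_{[W_0]},\omega_G)$ to it carries a generator to a generator), which you invoke implicitly via ``$\Tr_f$ is an isomorphism'' and ``functoriality of $\Tr_f$''. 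Finally, note that the theorem as stated in the paper only requires a generator (the exact value $1_R$ is deferred to Question~\ref{question:lipman2}), so your closing worry about pinning down the sign is not needed; the real issue is establishing nonvanishing of the class modulo every prime.
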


In the next section, we outline our strategy. 
Theorem~\ref{theorem:dvr} is proved as Theorem~\ref{theorem:dvrDetails} in
Section~\ref{section:dvr}. Prior to that, in Section~\ref{section:fld}, we
prove the analogous statement (Theorem~\ref{theorem:fld})
over fields, which is used in the proof of
Theorem~\ref{theorem:dvrDetails}.
Theorem~\ref{theorem:grass} is proved in Section~\ref{section:grass}.

\subsection*{A stronger question}
After seeing a pre-print of this, Lipman suggested the following sharper
version of the question.

\begin{question}[Lipman]
\label{question:lipman2}
Is there a canonical exact sequence
\[
0 \to \omega_{X/R} \to F_{n} \to \cdots \to F_{1} \to \strSh_X \to 0
\in 
\Ext_X^n(\strSh_X, \omega_{X/R}) = 
\homology^n(X, \omega_{X/R})
\]
that is taken to $1_R$ (rather than to some unit in $R$)
under the canonical trace map defined via local residues
(\textit{cf}.~\cite[Theorem~(0.6)] {LipmanAsterisque1984})?
\end{question}

Unfortunately, our answers (Theorems~\ref{theorem:dvr}
and~\ref{theorem:grass}) to Question~\ref{question:lipman} do not answer
the stronger question.

\subsection*{Acknowledgements}

We thank Joseph Lipman for asking this question and providing us with some
observations, Patrick Polo and Steven Sam for some clarification on the
Grassmannian and Pramath Sastry for various discussions. Jerzy Weyman asked
us whether the argument for $G_{2,m}$ works for other Grassmannians;
see Remark~\ref{remarkbox:Gdn}. We also thank the referee for comments that
improved the exposition.
The computer algebra system~\cite{M2}
provided valuable assistance in studying examples.

\section{Strategy}
\label{section:strategy}

Our starting point for finding a generator for $\Ext$ is the
the following lemma.
It might be well-known, but we include a proof at the end of this section
for the sake of completeness.

\begin{lemma}
\label{lemma:KoszulGen}
Let $A$ be a Gorenstein local ring and $\bfa :=a_1, \ldots, a_m$ a regular
sequence in $A$. Then the augmented Koszul complex
\[
0 \to A \to A^{m} \to \cdots \to A^m \to A \to A/\bfa A \to 0
\]
is a generator of the $A/\bfa A$-module $\Ext_A^m(A/\bfa A, A) \simeq
A/\bfa A$.
\end{lemma}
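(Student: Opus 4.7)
The plan is to use the self-duality of the Koszul complex. Because $\bfa$ is a regular sequence in $A$, the Koszul complex $K_\bullet := K_\bullet(\bfa; A)$ is a free resolution of $A/\bfa A$, and so $\Ext^m_A(A/\bfa A, A)$ is computed as the top cohomology of $\Hom_A(K_\bullet, A)$. Standard Koszul self-duality identifies $\Hom_A(K_\bullet, A)$ with $K_\bullet$ (with degree reversed), so this top cohomology is $H_0(K_\bullet) = A/\bfa A$; under this identification, $1 + \bfa A$ is the cohomology class of the identity map on the top Koszul term $\bigwedge^m A^m = A$. This is the isomorphism $\Ext^m_A(A/\bfa A, A) \simeq A/\bfa A$ appearing in the statement. (The Gorenstein hypothesis on $A$ is not actually needed for this part, but matches the intended context.)

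The second step is to identify the Yoneda $\Ext$ class represented by the augmented Koszul complex under this isomorphism. I would invoke the standard dictionary between $m$-fold extensions and $\Ext$-cocycles: one lifts $\mathrm{id}_{A/\bfa A}$ to a chain map from the chosen projective resolution $K_\bullet \to A/\bfa A$ into the given extension, and reads off the class from the component in top degree. For the augmented Koszul complex itself, the resolution and the extension coincide, so the natural lift is the identity chain map, and the representing cocycle in top degree is $\mathrm{id} \colon A \to A$. Under the self-duality isomorphism above, this corresponds to $1 + \bfa A \in A/\bfa A$, which generates the cyclic $A/\bfa A$-module $\Ext^m_A(A/\bfa A, A)$.

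The only real care required is in pinning down the conventions (direction of the self-duality, sign choices in the Yoneda dictionary) so that the class of $\mathrm{id}\colon A \to A$ corresponds literally to $1 + \bfa A$ rather than to some other unit of $A/\bfa A$. Once this bookkeeping is in place, the lemma follows with no further input; it is essentially a formal consequence of $K_\bullet$ being the universal free resolution of $A/\bfa A$.
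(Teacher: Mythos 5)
Your proof is correct, but it takes a genuinely different route from the paper's. You argue directly and without induction: compute $\Ext^m_A(A/\bfa A, A)$ as the top cohomology of $\Hom_A(K_\bullet,A)$ (via Koszul self-duality, or simply as $\mathrm{coker}\bigl(\Hom_A(K_{m-1},A)\to\Hom_A(K_m,A)\bigr)\simeq A/\bfa A$, with the identity cocycle $K_m\to A$ mapping to $1+\bfa A$), and then observe that under the standard dictionary between $m$-fold Yoneda extensions and cocycles --- lift $\mathrm{id}_{A/\bfa A}$ from the projective resolution into the extension and read off the degree-$m$ component --- the augmented Koszul complex is represented by exactly this identity cocycle, because the extension and the resolution coincide, so the comparison chain map can be taken to be the identity. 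The paper instead proceeds by induction on $m$: the base case $m=1$ uses the same Cartan--Eilenberg dictionary you invoke, and the inductive step realizes $K_\bullet(\bfa)$ as the mapping cone of multiplication by $a_m$ on $K_\bullet(\bfb)$ and tracks the generator through the connecting morphism $\Ext^{m-1}_A(A/\bfb A,A)\to\Ext^{m}_A(A/\bfa A,A)$. Your argument is the more economical of the two and makes fully explicit the identification of the Yoneda class of the augmented complex with the cocycle $\mathrm{id}\colon K_m\to A$, a point the paper's inductive step leaves somewhat implicit; the paper's version has the mild advantage of only ever invoking the snake-lemma boundary rather than a choice of lifting. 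As you note, the Gorenstein hypothesis plays no role in either argument, and the sign ambiguity you flag is harmless, since it changes the class only by a unit and the conclusion is that the class is a generator.
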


To use the above lemma in our context, we look for a complete intersection
inside $X$ that is of relative dimension zero over $R$. For the strategy to
work, this is not enough; we also need that $X \to \Spec R$ has a section.

\begin{notation}
\label{notation:main}
Let $R$ be a Cohen-Macaulay ring with a dualizing module $\omega_R = R$
and $X$ a smooth projective $R$-scheme, of relative
dimension $n$. Assume that
\begin{enumerate}

\item
the map $X \to \Spec R$ has a section $Z$;

\item
There exists a rank-$n$ locally free sheaf $\calF$ on $X$ and $\sigma \in
\homology^0(X, \calF)$ such that the zero section $X_n$ of 
$\sigma$ is flat and of relative dimension $0$ over $R$;

\item
$Z$ is a connected component of $X_n$.
\end{enumerate}
\end{notation}

Note that if $R$ is not local, the hypothesis on $R$ above
might be stronger than requiring $R$ to be Gorenstein, i.e., $R_\frakp$ is
a Gorenstein local ring for all $\frakp \in \Spec R$.

\begin{notation}
We consider only relative dualizing sheaves in this paper. Therefore we
abbreviate $\Omega_{X/R}$ as $\Omega_X$
and $\omega_{X/R}$ as $\omega_X$.
\end{notation}

Given such an $\calF$ and $\sigma$, we have the Koszul complex
\[
0 \to \det \calF^* \to \bigwedge^{n-1} \calF^*
\to \cdots \to 
\bigwedge^{2} \calF^* \to
\calF^* \stackrel{\sigma}\to
\strSh_X \to 0,
\]
which is a locally free resolution of $\strSh_{X_n}$,
since $X_n$ has codimension $n$ inside $X$.
Apply $\sheafHom_{\strSh_X}(-,\omega_X)$ and note that 
$\omega_{X_n} = \sheafExt^n_{\strSh_X}(\strSh_{X_n}, \omega_X)$
to obtain an exact sequence
\[
0 \to \omega_X  \to \omega_X \otimes \calF
\to \cdots \to \omega_X \otimes \det \calF
\to \omega_{X_n} \to 0.
\]
Since $X_n$ is local complete intersection inside $X$ and $\omega_X$ is
a locally free $\strSh_X$-module of rank $1$, we see that 
$\omega_{X_n}$ is a locally free $\strSh_{X_n}$-module of rank $1$.
We write $\widetilde K_\bullet$ for this exact sequence.
It is labelled in such a way that $\widetilde{K}_n = \omega_X \otimes \calF$ 
and 
$\widetilde{K}_1 = \omega_X \otimes \det \calF$.

Since $Z$ is a connected component of $X_n$, we can write
$X_n$ as a disjoint union $Z \bigsqcup Z'$. Then we have a split exact
sequence
\begin{equation}
\label{equation:OXnsplits}
\begin{tikzcd}
    0 \arrow{r} & \strSh_{Z'} \arrow{r} & \strSh_{X_n} \arrow{r}{\pi} & \strSh_Z \arrow{r}\arrow[bend left=33]{l}{i} & 0
\end{tikzcd}
\end{equation}
of $\strSh_X$-modules. Similarly, we also have a split exact sequence
\begin{equation}
\label{equation:OmegaXnsplits}
\begin{tikzcd}
    0 \arrow{r} & \omega_{Z'} \arrow{r} & \omega_{X_n} \arrow{r} & \omega_Z \arrow{r}\arrow[bend left=33]{l}{i_{\omega}} & 0.
\end{tikzcd}
\end{equation}
By assumption, there is a canonical isomorphism from $\strSh_Z$ to $\omega_Z$.
(The map $f : Z \to \Spec R$ is identity. Hence $\omega_Z = f^! \strSh_Z$
is canonically isomorphic to $\strSh_Z$.)
By abuse of notation, we will also use $i_{\omega} : \strSh_Z \to \omega_{X_n}$ to denote the composite of this isomorphism and $i$.

Now consider the following commutative diagram. The lowest row is the exact
sequence $\widetilde{K}_\bullet$.
The middle row is the pull-back of the lowest row along the map $i_\omega$.
The top row is the pull-back of the middle row along the natural surjective map $\strSh_X
\to \strSh_Z$.
\begin{equation}
\label{equation:pullbackOfExt}
\begin{gathered}
\xymatrix{
0 \ar[r] & \omega_X \ar[r] \ar@{=}[d]& \cdots \ar[r] & 
\widetilde{K}_2 \ar[r] \ar@{=}[d]&
\calG_1 \ar[r] \ar[d]&
\strSh_{X} \ar[r] \ar[d] & 0
\\
0 \ar[r] & \omega_X \ar[r] \ar@{=}[d]& \cdots \ar[r] & 
\widetilde{K}_2 \ar[r] \ar@{=}[d]&
\calG_0 \ar[r] \ar[d]&
\strSh_{Z} \ar[r] \ar[d]^{i_\omega}& 0
\\
0 \ar[r] & \omega_X \ar[r] & \cdots \ar[r] & 
\widetilde{K}_2 \ar[r] &
\widetilde{K}_1\ar[r] &
\omega_{X_n} \ar[r] & 0
}
\end{gathered}
\end{equation}
Writing $I_{Z'}$ for the ideal
sheaf of $Z'$ in $X$, we see that
$\calG_0 = I_{Z'} \otimes \tilde K_1$.

The goal is to show that the top row of the above diagram 
generates $\Ext_X^n(\strSh_X, \omega_X)$ as an $R$-module.
In the next proposition, we show that the middle row 
in the diagram~\eqref{equation:pullbackOfExt} generates 
$\Ext_X^n(\strSh_Z, \omega_X)$ as an $R$-module.

\begin{proposition}
\label{proposition:midrow}
Let $R$ and $X$ be as above.
Then

\begin{enumerate}

\item
\label{proposition:midrow:stalkGen}
Let $x \in X_n$. Then the stalk of 
$\widetilde K_\bullet$ at $x$ 
generates the stalk of 
the sheaf $\sheafExt^n_{\strSh_X}(\strSh_{X_n}, \omega_X)$ at $x$
as an $\strSh_{X, x}$-module.

\item
\label{proposition:midrow:midrow}
The middle row in the diagram~\eqref{equation:pullbackOfExt} generates 
$\Ext_X^n(\strSh_Z, \omega_X)$ as an $R$-module.

\end{enumerate}
\end{proposition}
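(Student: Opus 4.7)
For part~\ref{proposition:midrow:stalkGen}, I would argue locally at a point $x \in X_n$. Setting $A = \strSh_{X,x}$, the Gorenstein hypothesis on $R$ together with the smoothness of $f$ makes $A$ a Gorenstein local ring, and $\omega_{X,x}$ a free $A$-module of rank one. Trivializing $\calF$ in a neighbourhood of $x$, the section $\sigma$ becomes an $n$-tuple $\bfa = a_1, \ldots, a_n$ in $A$, and since $X_n$ has codimension $n$ in $X$ this tuple is an $A$-regular sequence. After trivializing $\omega_X$ as well, the stalk of $\widetilde K_\bullet$ at $x$ becomes the augmented dual Koszul complex on $\bfa$, so Lemma~\ref{lemma:KoszulGen} yields that it generates
\[
\Ext^n_A(A/\bfa A, A) \;=\; \sheafExt^n_{\strSh_X}(\strSh_{X_n}, \omega_X)_x
\]
as an $A/\bfa A$-module.

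For part~\ref{proposition:midrow:midrow}, the plan is to compare $\Ext^n_X(\omega_{X_n}, \omega_X)$ with $\Ext^n_X(\strSh_Z, \omega_X)$ via the local-to-global spectral sequence $E_2^{p,q} = \homology^p(X, \sheafExt^q_{\strSh_X}(-, \omega_X)) \Rightarrow \Ext^{p+q}_X(-, \omega_X)$. Since $X_n$ is a local complete intersection of codimension $n$ in $X$, since $\omega_{X_n}$ is an invertible $\strSh_{X_n}$-module, and since the decomposition $X_n = Z \sqcup Z'$ makes $\strSh_Z$ a direct summand of $\strSh_{X_n}$, the sheaf $\sheafExt^q_{\strSh_X}(-, \omega_X)$ vanishes for $q \neq n$ when evaluated on each of $\strSh_{X_n}$, $\omega_{X_n}$, and $\strSh_Z$. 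Combined with the canonical isomorphisms $\sheafHom_{\strSh_{X_n}}(\omega_{X_n}, \omega_{X_n}) \cong \strSh_{X_n}$ and $\omega_Z \cong \strSh_Z$, the spectral sequence degenerates and produces
\[
\Ext^n_X(\omega_{X_n}, \omega_X) \;\cong\; \homology^0(X_n, \strSh_{X_n}), \qquad \Ext^n_X(\strSh_Z, \omega_X) \;\cong\; R.
\]

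Under the first isomorphism, the Yoneda class $[\widetilde K_\bullet]$ corresponds to a global section $s \in \homology^0(X_n, \strSh_{X_n})$, and part~\ref{proposition:midrow:stalkGen} translates into the statement that $s$ is a unit at every stalk of $\strSh_{X_n}$, hence a unit in the ring $\homology^0(X_n, \strSh_{X_n})$. The middle row of~\eqref{equation:pullbackOfExt} is the Yoneda composition $[\widetilde K_\bullet] \circ i_\omega$. The main bookkeeping step, which I expect to be the chief technical difficulty, is to identify the induced map $\Ext^n_X(i_\omega, \omega_X)$, under the isomorphisms above, with the ring-theoretic restriction $\homology^0(X_n, \strSh_{X_n}) \to \homology^0(Z, \strSh_Z) = R$ coming from the open-and-closed inclusion $Z \hookrightarrow X_n$; this reduces, stalk-wise at $x \in Z$, to the observation that $i_\omega$ is precisely the canonical isomorphism $\strSh_{Z,x} \xrightarrow{\sim} \omega_{Z,x}$ used in its definition. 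Granting this identification, the image of $s$ is a unit in $R$, and this unit generates $R = \Ext^n_X(\strSh_Z, \omega_X)$ as an $R$-module.
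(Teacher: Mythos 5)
Your argument is correct and follows essentially the same route as the paper: part~\eqref{proposition:midrow:stalkGen} is the paper's reduction to Lemma~\ref{lemma:KoszulGen}, and part~\eqref{proposition:midrow:midrow} rests on the same three inputs (part~\eqref{proposition:midrow:stalkGen}, the splitting $\strSh_{X_n} = \strSh_Z \oplus \strSh_{Z'}$, and the vanishing of $\sheafExt^{i}_{\strSh_X}(-,\omega_X)$ for $i<n$, which collapses $\Ext^n_X$ to global sections of $\sheafExt^n$). The ``chief technical difficulty'' you defer can in fact be bypassed as the paper does: since $i_\omega$ is a split monomorphism, the induced map $\Ext^n_X(\omega_{X_n},\omega_X) \to \Ext^n_X(\strSh_Z,\omega_X)$ is projection onto a direct summand, and the projection of a generator of a module generates the summand, so no explicit identification with the restriction homomorphism $\homology^0(X_n,\strSh_{X_n}) \to R$ is required.
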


\begin{proof}
\eqref{proposition:midrow:stalkGen}:
Note that $X$ and, therefore, $X_n$ are Gorenstein.
The proposition now follows from Lemma~\ref{lemma:KoszulGen}.

\eqref{proposition:midrow:midrow}
From~\eqref{equation:OXnsplits} we see that $i^*\pi^*$
is the identity map on $\sheafExt^n_{\strSh_X}(\strSh_{Z}, \omega_X)$.
Hence, by~\eqref{proposition:midrow:stalkGen}, the middle row generates
the stalk of $\sheafExt^n_{\strSh_X}(\strSh_{Z}, \omega_X)$ at every point
on $Z$. Note that, since $X$ is locally Cohen-Macaulay and
$\codim_X(Z) = n$, 
$\sheafExt^i_{\strSh_X}(\strSh_{Z}, \omega_X)=0$ for all $i<n$.
Therefore 
$\Ext_X^n(\strSh_Z, \omega_X) 
= \Gamma(X,\sheafExt^n_{\strSh_X}(\strSh_{Z}, \omega_X))$.
Since $Z = \Spec R$, we get the proposition.
\end{proof}

We want to show that the top row in the
diagram~\eqref{equation:pullbackOfExt}
generates $\Ext_X^n(\strSh_X, \omega_X)$ as an $R$-module
(with suitable further hypothesis on $R$), by showing that
the natural map
$\Ext_X^n(\strSh_Z, \omega_X) \to
\Ext_X^n(\strSh_X, \omega_X)$
is an isomorphism.
Write $I_Z$ for the ideal sheaf of $Z$ in $X$.
Then we have the exact sequence
\begin{equation}
\label{equation:fiveTermOnExt}
\Ext_X^{n}(\strSh_Z, \omega_X) \to 
\Ext_X^{n}(\strSh_X, \omega_X) \to 
\Ext_X^{n}(I_Z, \omega_X).
\end{equation}
By Proposition~\ref{proposition:midrow}\eqref{proposition:midrow:midrow},
$\Ext^{n}(\strSh_Z, \omega_X)$ is a cyclic $R$-module.
Hence it would suffice to show that
\begin{equation}
\label{equation:ExtOXExtIX}
\Ext_X^{n}(\strSh_X, \omega_X) = R \;\text{and}\; 
\Ext_X^{n}(I_Z, \omega_X) = 0.
\end{equation}
We start with a few preliminary results.

\begin{lemma}
\label{lemma:HZeroWithSecFld}
Let $K$ be a field and $Y$ a connected reduced projective $K$-scheme with a
$K$-rational point. Then the natural map $K \to \homology^0(Y, \strSh_Y ) $
is an isomorphism.
\end{lemma}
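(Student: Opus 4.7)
Write $A := \homology^0(Y, \strSh_Y)$. The plan is to show $A$ is a field extension of $K$ which admits a $K$-algebra retraction to $K$, forcing $A = K$.

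First, since $Y$ is projective over the noetherian ring $K$, the coherent cohomology group $A$ is a finite-dimensional $K$-vector space, hence in particular an artinian $K$-algebra. Since $Y$ is reduced, the ring of global sections is reduced: any nilpotent global section would have nilpotent germ at every point, contradicting reducedness of the stalks. Hence $A$ is a finite product of finite field extensions of $K$, say $A = K_1 \times \cdots \times K_r$.

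Next I would argue that $r = 1$ using connectedness of $Y$. Any idempotent $e \in A$ determines, via its image in each stalk $\strSh_{Y,y}$ (a local ring, whose only idempotents are $0$ and $1$), a partition of $Y$ into two disjoint open subsets. Since $Y$ is connected, $e \in \{0,1\}$, so $A$ has no nontrivial idempotents and thus $A = K'$ is a single field extension of $K$.

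Finally, the $K$-rational point $p : \Spec K \to Y$ induces, by taking global sections, a $K$-algebra homomorphism $A = K' \to \homology^0(\Spec K, \strSh_{\Spec K}) = K$. The composite $K \to K' \to K$ is the identity of $K$, so the inclusion $K \hookrightarrow K'$ is surjective and $K' = K$. None of the steps is a serious obstacle; the only mildly non-formal ingredient is the finiteness of $A$ as a $K$-vector space, which is Serre's theorem on coherent cohomology of projective schemes.
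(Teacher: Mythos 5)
Your proof is correct and follows essentially the same route as the paper: projectivity gives finiteness over $K$, reducedness and connectedness force $\homology^0(Y,\strSh_Y)$ to be a single finite field extension of $K$, and the $K$-rational point supplies the $K$-algebra retraction that collapses it to $K$. The only difference is cosmetic — you rule out idempotents explicitly where the paper simply asserts that the ring is local — so nothing further is needed.
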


\begin{proof}
Since $Y$ is a connected reduced projective $K$-scheme, 
$\homology^0(Y, \strSh_Y )$ is a local reduced ring, finite over $K$, i.e,
a finite extension field of $K$. Since $Y$ has a $K$-rational point, there
is a map $\homology^0(Y, \strSh_Y ) \to K$ of $K$-algebras. This proves the
lemma.
\end{proof}

\begin{proposition}
\label{proposition:HZeroWithSec}
Assume that $R$ is a normal domain and that $X$ is connected and reduced.
Then the natural maps $R \to \homology^0(X, \strSh_X)$ 
and
$\homology^0(X, \strSh_X ) \to \homology^0(Z, \strSh_Z)$
are isomorphisms.
\end{proposition}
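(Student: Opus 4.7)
The plan is to set $A := \homology^0(X, \strSh_X)$ and prove $A = R$; the second isomorphism then follows automatically, because the composite
\[
R \to \homology^0(X, \strSh_X) \to \homology^0(Z, \strSh_Z) = R
\]
is the identity (the section $Z \to \Spec R$ is the identity on $\Spec R$), so if the first map is an isomorphism, so is the second.

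First, $A$ is a finitely generated $R$-module by properness of $f$, and it is $R$-torsion-free: smoothness of $f$ gives flatness, so multiplication by any nonzero $r \in R$ is injective on $\strSh_X$, and hence on $A$. Thus $A$ embeds into $A \otimes_R K$, where $K = \Frac(R)$. Flat base change along $R \to K$ identifies this tensor product with $\homology^0(X_K, \strSh_{X_K})$, where $X_K$ denotes the generic fibre.

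Next I verify that $X_K$ satisfies the hypotheses of Lemma~\ref{lemma:HZeroWithSecFld}. It is reduced and projective over $K$ (as base changes of the smooth and projective morphism $f$), and a $K$-rational point is supplied by the section $Z$. The subtle point is connectedness. Because $f$ is smooth, hence flat with geometrically regular fibres, Serre's criterion transfers the conditions $R_1$ and $S_2$ from $R$ to $X$, so $X$ is normal. A connected normal noetherian scheme is irreducible, since its irreducible components are pairwise disjoint; thus $X$ is irreducible. Flatness forces the generic point of $X$ to lie above the generic point of $\Spec R$, so it lies in $X_K$, forcing $X_K$ to be irreducible and in particular connected. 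Lemma~\ref{lemma:HZeroWithSecFld} then yields $\homology^0(X_K, \strSh_{X_K}) = K$.

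Combining these, $A \otimes_R K = K$, and torsion-freeness realizes $A$ as a finitely generated $R$-submodule of $K$ containing $R$; such a module is integral over $R$, so $A = R$ by normality of $R$. The one delicate input is the connectedness of $X_K$ under the weak hypothesis that $R$ is only normal (not necessarily regular), which is handled by the normality-plus-connectedness argument above.
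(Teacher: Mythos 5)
Your proof is correct and follows essentially the same route as the paper: reduce to the generic fibre $X_K$ by flat base change, apply Lemma~\ref{lemma:HZeroWithSecFld} there, and use normality of $R$ to descend from $\Frac(R)$ to $R$. The only difference is cosmetic: you spell out why $X_K$ is integral (normality of $X$ from smoothness over a normal base, plus connectedness) and phrase the final step via integrality of a finitely generated submodule of $K$, both of which the paper leaves implicit.
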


\begin{proof}
Write $K = \Frac(R)$.
We claim that $(R \to \homology^0(X, \strSh_X)) \otimes_R K$ is an
isomorphism. Assume the claim. Note that 
$\homology^0(X, \strSh_X)$ is an integral domain that is finite over
$R$. By the claim, $\Frac(\homology^0(X, \strSh_X) ) = K$. Since $R$ is
normal, we get the first statement in proposition. The second statement
follows immediately.
To prove the claim, write $X_K$ for the generic fibre $X \times_{\Spec R}
\Spec K$. It is an integral scheme and has a $K$-rational point $Z_K$.
By Lemma~\ref{lemma:HZeroWithSecFld}, the natural map $K \to 
\homology^0(X_K, \strSh_{X_K})$ is an isomorphism.
By flat base-change, 
$\homology^0(X_K, \strSh_{X_K}) = \homology^0(X, \strSh_X) \otimes_R K$.
This proves the claim.
\end{proof}

We state the next proposition for one-dimensional PIDs, because the proofs
of the ensuing lemmas assume that the ring is not a field. However the
statement is true over fields also. In fact, over a field, we can prove the
existence of the scheme $X_n$ in Notation~\ref{notation:main} with the
desired properties. This is done in the next section.

\begin{proposition}
\label{proposition:toprowPID}
Assume that $R$ is a PID that is not a field. 
Then the top row in the
diagram~\eqref{equation:pullbackOfExt}
generates $\Ext_X^n(\strSh_X, \omega_X)$ as an $R$-module.
\end{proposition}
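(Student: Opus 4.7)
The plan is to use the exact sequence~\eqref{equation:fiveTermOnExt} to reduce the assertion to the two identities recorded in~\eqref{equation:ExtOXExtIX}, namely $\Ext_X^n(\strSh_X,\omega_X)=R$ and $\Ext_X^n(I_Z,\omega_X)=0$. Granted these, the map $\Ext_X^n(\strSh_Z,\omega_X)\to\Ext_X^n(\strSh_X,\omega_X)$ is surjective onto $R$; since the middle row of~\eqref{equation:pullbackOfExt} generates $\Ext_X^n(\strSh_Z,\omega_X)$ as an $R$-module by Proposition~\ref{proposition:midrow}\eqref{proposition:midrow:midrow}, its image---which is by construction the top row of~\eqref{equation:pullbackOfExt}---generates the target.

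To compute the two Ext groups in question, the natural tool is Grothendieck duality
\[
\RDer f_*\RDer\sheafHom_X(\calG,\omega_X[n])\simeq\RDer\Hom_R(\RDer f_*\calG,R)
\]
with $\calG=\strSh_X$ and $\calG=I_Z$. Taking $\homology^0$ on both sides and feeding in the hyper-Ext spectral sequence $E_2^{p,q}=\Ext_R^p(R^{-q}f_*\calG,R)$, which over the PID $R$ collapses to the universal coefficient sequence
\[
0\to\Ext_R^1(R^1f_*\calG,R)\to\Ext_X^n(\calG,\omega_X)\to\Hom_R(f_*\calG,R)\to 0,
\]
reduces the problem to control of $f_*$ and $R^1f_*$ of the two sheaves.

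For $\calG=\strSh_X$, Proposition~\ref{proposition:HZeroWithSec} gives $f_*\strSh_X=R$ (assuming $X$ connected and reduced; these follow from smoothness of $X/R$ and will be in force), so $\Hom_R(f_*\strSh_X,R)=R$. For $\calG=I_Z$, I apply $\RDer f_*$ to $0\to I_Z\to\strSh_X\to\strSh_Z\to 0$ and use $Z\simeq\Spec R$, whence $f_*\strSh_Z=R$ and $R^if_*\strSh_Z=0$ for $i>0$; the induced map $f_*\strSh_X\to f_*\strSh_Z$ is split by the adjunction unit $R\to f_*\strSh_X$ because $f\circ s=\id_{\Spec R}$, and combined with $f_*\strSh_X=R$ this forces it to be an isomorphism. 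Consequently $f_*I_Z=0$ and $R^if_*I_Z\simeq R^if_*\strSh_X$ for $i\geq 1$. Both identities in~\eqref{equation:ExtOXExtIX} then reduce to the single condition $\Ext_R^1(R^1f_*\strSh_X,R)=0$, equivalently, torsion-freeness of the finitely generated $R$-module $R^1f_*\strSh_X$.

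The main obstacle is this last torsion-freeness statement: it does not follow from the axioms in Notation~\ref{notation:main} alone, and will presumably require the auxiliary lemmas signalled in the remark preceding the proposition. The natural route is to work fibrewise, combining cohomology and base change with the field-case result (Theorem~\ref{theorem:fld}) proved in the next section, and to use the hypothesis $\dim R=1$ so that any hypothetical torsion of $R^1f_*\strSh_X$ can be localised at a height-one prime and ruled out by a DVR-level argument.
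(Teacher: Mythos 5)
Your reduction is precisely the one the paper makes: the five-term sequence \eqref{equation:fiveTermOnExt} together with Proposition~\ref{proposition:midrow}\eqref{proposition:midrow:midrow} reduces everything to the two identities in \eqref{equation:ExtOXExtIX}; your duality/universal-coefficient sequence is the paper's Lemma~\ref{lemma:UnivCoeff}; $f_*\strSh_X=R$, $f_*I_Z=0$ and $R^1f_*I_Z\simeq R^1f_*\strSh_X$ are Proposition~\ref{proposition:HZeroWithSec} and Lemma~\ref{lemma:IZfree}; and you correctly isolate torsion-freeness of $\homology^1(X,\strSh_X)$ as the single remaining input. That input, however, is exactly where your argument stops, and it is not optional: as written the proof is incomplete.

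Contrary to your worry, the torsion-freeness does follow from the hypotheses of Notation~\ref{notation:main} together with $R$ being a PID that is not a field, by a short fibrewise argument (the paper's Lemma~\ref{lemma:strShXfree}) that is close to, but more elementary than, the route you sketch. For an irreducible $a\in R$ with $\Bbbk=R/(a)$, the long exact cohomology sequence of $0\to\strSh_X\xrightarrow{\cdot a}\strSh_X\to\strSh_{X_\Bbbk}\to 0$ reads
\[
0\to R\xrightarrow{\cdot a}R\to\homology^0(X_\Bbbk,\strSh_{X_\Bbbk})\to\homology^1(X,\strSh_X)\xrightarrow{\cdot a}\homology^1(X,\strSh_X),
\]
using $\homology^0(X,\strSh_X)=R$. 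The section $Z$ gives a $\Bbbk$-rational point of $X_\Bbbk$, so Lemma~\ref{lemma:HZeroWithSecFld} yields $\homology^0(X_\Bbbk,\strSh_{X_\Bbbk})=\Bbbk$; hence $R/(a)\to\homology^0(X_\Bbbk,\strSh_{X_\Bbbk})$ is surjective, the connecting map vanishes, and $\cdot a$ is injective on $\homology^1(X,\strSh_X)$. Since $a$ was an arbitrary irreducible, $\homology^1(X,\strSh_X)$ is torsion-free, hence free, and $\homology^1(X,I_Z)$ is free as a submodule of it. Note that only the $\homology^0$ statement for the fibres (Lemma~\ref{lemma:HZeroWithSecFld}) is needed here, not the full field-case Theorem~\ref{theorem:fld}, which enters only later in the base-change part of Theorem~\ref{theorem:dvrDetails}\eqref{theorem:dvrDetails:extgen}; and no localisation at height-one primes is required, since the statement is global over the PID.
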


We prove this with a series of lemmas.

\begin{lemma}
\label{lemma:strShXfree}
Assume that $R$ is a PID that is not a field. Then
$\homology^1(X, \strSh_X)$ is a free $R$-module.
\end{lemma}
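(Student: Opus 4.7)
The plan is to show that $\homology^1(X,\strSh_X)$ is finitely generated and torsion-free over $R$; since $R$ is a PID this will force it to be free. Finite generation is immediate because $f: X \to \Spec R$ is proper, so $R^i f_*\strSh_X$ is coherent and $\homology^i(X,\strSh_X)$ is the module of its global sections over $\Spec R$.

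To verify torsion-freeness, fix a prime element $t \in R$; such an element exists because $R$ is a PID that is not a field. Since $f$ is smooth, $\strSh_X$ is $R$-flat, so multiplication by $t$ yields the short exact sequence
\[
0 \to \strSh_X \stackrel{t}{\longrightarrow} \strSh_X \to \strSh_{X_t} \to 0,
\]
where $X_t := X \times_{\Spec R} \Spec(R/tR)$ is the scheme-theoretic fibre over the closed point $(t)$. The associated long exact sequence identifies the $t$-torsion submodule of $\homology^1(X,\strSh_X)$ with the cokernel of the natural map
\[
\homology^0(X,\strSh_X) \longrightarrow \homology^0(X_t,\strSh_{X_t}).
\]

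Next I would compute both sides. Since $R$ is a PID it is normal, so Proposition~\ref{proposition:HZeroWithSec} gives $\homology^0(X,\strSh_X) = R$. For the fibre, $X_t$ is a smooth projective scheme over the field $R/tR$; smoothness forces it to be reduced, and the section $Z$ restricts to an $(R/tR)$-rational point of $X_t$. Granted that $X_t$ is connected, Lemma~\ref{lemma:HZeroWithSecFld} gives $\homology^0(X_t,\strSh_{X_t}) = R/tR$, and the map in question becomes the canonical surjection $R \twoheadrightarrow R/tR$. Its cokernel is zero, so $\homology^1(X,\strSh_X)$ has no $t$-torsion; as $t$ was arbitrary, torsion-freeness follows.

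The principal subtlety is ensuring connectedness of each closed fibre $X_t$. This is essentially the geometrically-connected-fibres hypothesis already needed in Question~\ref{question:lipman} for the trace map to be an isomorphism, and it should be taken as part of the standing assumptions in Notation~\ref{notation:main}; once it is in hand, the rest of the argument is a short diagram chase on the long exact sequence of cohomology.
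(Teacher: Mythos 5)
Your proof is correct and is essentially the paper's own argument: both use the long exact cohomology sequence of multiplication by a prime $t$ on $\strSh_X$, together with $\homology^0(X,\strSh_X)=R$ (Proposition~\ref{proposition:HZeroWithSec}) and $\homology^0(X_t,\strSh_{X_t})=R/tR$ (Lemma~\ref{lemma:HZeroWithSecFld}, applied to the closed fibre with the rational point coming from the section $Z$), to conclude that multiplication by $t$ is injective on $\homology^1(X,\strSh_X)$, hence that this finitely generated module is torsion-free and therefore free over the PID $R$. The connectedness of the closed fibre, which you rightly flag as the one point needing care, is also left implicit in the paper; it is harmless here because $f_*\strSh_X=\strSh_{\Spec R}$ together with Stein factorization forces the fibres of the proper morphism $f$ to be connected.
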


\begin{proof}
Let $a$ be an irreducible element of $R$ and $\Bbbk = R/(a)$. 
Consider the exact sequence
\[
0 \to 
\homology^0(X, \strSh_X) \stackrel{\cdot a} \to 
\homology^0(X, \strSh_X) \to
\homology^0(X_{\Bbbk}, \strSh_{X_{\Bbbk}})
\to
\homology^1(X, \strSh_X) \stackrel{\cdot a} \to 
\homology^1(X, \strSh_X) \to.
\]
where $X_\Bbbk = X \times_{\Spec R} \Spec \Bbbk$.
Then $Z \times_{\Spec R} \Spec \Bbbk$ is 
a $\Bbbk$-rational point of the $\Bbbk$-scheme $X_{\Bbbk}$.
Hence 
$\homology^0(X_{\Bbbk}, \strSh_{X_{\Bbbk}}) = \Bbbk$ by
Lemma~\ref{lemma:HZeroWithSecFld}.
By Proposition~\ref{proposition:HZeroWithSec},
$\homology^0(X, \strSh_X) = R$.
Therefore the map 
$\homology^1(X, \strSh_X) \stackrel{\cdot a} \to 
\homology^1(X, \strSh_X)$
is injective.
Since $a$ is arbitrary, 
$\homology^1(X, \strSh_X)$ is torsion-free and, hence, free.
\end{proof}

\begin{lemma}
\label{lemma:IZfree}
Assume that $R$ is a PID that is not a field. Then
$\homology^0(X, I_Z) = 0$ and $\homology^1(X, I_Z)$ is a free $R$-module.
\end{lemma}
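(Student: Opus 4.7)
The plan is to extract both vanishing and freeness from the short exact sequence of ideal, structure, and quotient on $X$, namely
\[
0 \to I_Z \to \strSh_X \to \strSh_Z \to 0,
\]
by passing to the long exact sequence in sheaf cohomology and feeding in the previously established facts about $\homology^0(X,\strSh_X)$ and $\homology^1(X,\strSh_X)$.

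First I would note that $Z \cong \Spec R$ is affine, so $\homology^0(Z, \strSh_Z) = R$ and $\homology^i(Z, \strSh_Z) = 0$ for all $i \geq 1$. The relevant segment of the long exact sequence therefore reads
\[
0 \to \homology^0(X, I_Z) \to \homology^0(X, \strSh_X) \to R \to \homology^1(X, I_Z) \to \homology^1(X, \strSh_X) \to 0.
\]
Next I would invoke Proposition~\ref{proposition:HZeroWithSec}: since $R$ is a PID (hence a normal domain) and $X$ is smooth and projective with a section, the natural composite $R \to \homology^0(X, \strSh_X) \to \homology^0(Z, \strSh_Z) = R$ is the identity and $R \to \homology^0(X, \strSh_X)$ is an isomorphism. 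Therefore $\homology^0(X, \strSh_X) \to R$ is an isomorphism, which immediately gives $\homology^0(X, I_Z) = 0$ and makes the connecting map $R \to \homology^1(X, I_Z)$ zero.

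Consequently $\homology^1(X, I_Z) \to \homology^1(X, \strSh_X)$ is injective, so $\homology^1(X, I_Z)$ is an $R$-submodule of $\homology^1(X, \strSh_X)$. By Lemma~\ref{lemma:strShXfree}, $\homology^1(X, \strSh_X)$ is free over the PID $R$, and a submodule of a free module over a PID is free, which finishes the proof. (In fact the right-hand vanishing $\homology^1(Z, \strSh_Z) = 0$ makes the map an isomorphism, so freeness transfers directly.)

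I do not expect a real obstacle here: once the connectedness/reducedness hypotheses implicit in Notation~\ref{notation:main} allow the application of Proposition~\ref{proposition:HZeroWithSec}, the argument is a mechanical long-exact-sequence chase. The only subtlety worth double-checking is the verification that the hypotheses of the earlier results (normality of $R$, connectedness of $X$, the section passing through) are indeed in force in the statement of the lemma.
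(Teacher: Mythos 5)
Your proposal is correct and follows essentially the same route as the paper: the long exact sequence of $0 \to I_Z \to \strSh_X \to \strSh_Z \to 0$, the identification of $\homology^0(X,\strSh_X) \to \homology^0(Z,\strSh_Z)$ as an isomorphism via Proposition~\ref{proposition:HZeroWithSec}, and the embedding of $\homology^1(X,I_Z)$ into the free module $\homology^1(X,\strSh_X)$ from Lemma~\ref{lemma:strShXfree}. The only cosmetic difference is that you additionally note $\homology^1(Z,\strSh_Z)=0$ to get an isomorphism rather than just an injection, which the paper does not need.
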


\begin{proof}
Consider the exact sequence
\[
0 \to 
\homology^0(X, I_Z)  \to 
\homology^0(X, \strSh_X) \stackrel{p} \to
\homology^0(Z, \strSh_Z) \to
\homology^1(X, I_Z)  \to 
\homology^1(X, \strSh_X) \to.
\]
The map $p$ is an isomorphism, since
$\homology^0(X, \strSh_X) \simeq R \simeq \homology^0(Z, \strSh_Z)$
and $p(1) = 1$.
Hence, firstly, $\homology^0(X, I_Z) = 0$.
Now note that $\homology^1(X, I_Z)$ is torsion-free, since it is a
submodule of the free module 
$\homology^1(X, \strSh_X)$ (Lemma~\ref{lemma:strShXfree}).
Therefore $\homology^1(X, I_Z)$ is free.
\end{proof}

\begin{lemma}
\label{lemma:UnivCoeff}
Assume that $R$ is a PID that is not a field.
Let $F$ be a quasi-coherent sheaf on $X$ and $q$ an integer.
Then there is an exact sequence
\[
0 \to \Ext_R^1(\homology^{q+1}(X, F ), R ) \to 
\Ext^{n-q}(F, \omega_X) \to
\Hom_R(\homology^q(X, F ), R ) \to  0
\]
of $R$-modules.
\end{lemma}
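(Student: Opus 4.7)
The plan is to combine the Grothendieck duality isomorphism recalled in the introduction with the universal-coefficient short exact sequence for $\RDer\Hom_R(-,R)$, leveraging the fact that $R$ has global dimension at most one.

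First I would invoke duality in its derived form
\[
\RDer f_* \RDer\sheafHom_X(F, \omega_X[n]) \simeq \RDer\Hom_R(\RDer f_* F, R).
\]
Since $\Spec R$ is affine, the left-hand side is just $\RDer\Hom_X(F, \omega_X[n])$, so taking $\homology^{-q}$ on both sides identifies $\Ext^{n-q}_X(F,\omega_X)$ with $\homology^{-q}(\RDer\Hom_R(C^\bullet, R))$, where I set $C^\bullet := \RDer f_* F$, so that $\homology^i(C^\bullet) = \homology^i(X, F)$. Since $f$ is proper of relative dimension $n$, the complex $C^\bullet$ lies in cohomological degrees $[0, n]$, and is in particular bounded.

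Next I would apply the hyper-Ext spectral sequence
\[
E_2^{p,s} = \Ext^p_R(\homology^{-s}(C^\bullet), R) \Longrightarrow \homology^{p+s}(\RDer\Hom_R(C^\bullet, R)).
\]
Because $R$ is a PID, $\Ext^p_R(-, R) = 0$ for every $p \geq 2$, so only the two columns $p = 0, 1$ contribute; consequently every differential $d_r$ with $r \geq 2$ vanishes for dimension reasons, the spectral sequence collapses at $E_2 = E_\infty$, and the resulting two-step filtration on the abutment is precisely the claimed short exact sequence (after the duality identification of the middle term).

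The one point requiring genuine care is the degeneration-plus-extension argument and the identification of the extreme terms of the filtration. To sidestep any worry about convergence when $C^\bullet$ is not known to be perfect, I would instead represent $C^\bullet$ by a quasi-isomorphic bounded complex $K^\bullet$ of free $R$-modules---available since $R$ has finite global dimension---and derive the sequence directly from the short exact sequences $0 \to B^i \to Z^i \to \homology^i(K^\bullet) \to 0$ and $0 \to Z^i \to K^i \to B^{i+1} \to 0$, each of whose terms is free because submodules of free modules over a PID are free. A standard diagram chase then produces the short exact sequence with no spectral sequence needed, and makes the identification of the edge maps transparent.
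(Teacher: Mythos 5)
Your proposal is correct and follows essentially the same route as the paper: both invoke Grothendieck--Serre duality for the proper morphism $f$ to identify $\Ext^{n-q}_X(F,\omega_X)$ with a cohomology module of $\RDer\Hom_R(\RDer\Gamma F, R)$, and then exploit that $R$ has global dimension one so that the resulting two-column spectral sequence degenerates into the stated short exact sequence. The paper phrases this via the normalized dualizing complex $\calD_R^\bullet \simeq R[1]$ realized as a two-term injective complex and the row filtration of the associated double complex, which is the same universal-coefficient argument up to an index shift.
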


\begin{proof}
By $\calD_R^\bullet$ for the normalized dualizing complex of $R$.
Note that $\calD_R^\bullet$ is isomorphic to the complex $R[1]$ (i.e., the
module $R$ sitting in cohomological index $-1$) in the derived category of
$R$. Write $f$ for the map $X \to \Spec R$. Then
\[
f^! \calD_R^\bullet := f^*\calD_R^\bullet \otimes_{\strSh_X} \bigwedge^n
\Omega_X [n] = \omega_X[n+1]
\]
is a dualizing complex for $X$.
By duality for proper morphisms, we have an isomorphism 
\[
\RDer f_* \RDer \sheafHom_X(F, f^! \calD_R^\bullet) = 
\RDer \sheafHom_R(\RDer f_* F, \calD_R^\bullet)
\]
in the derived category of $R$.
Since $\RDer f_* = \RDer \Gamma$, we get 
$\RDer \Hom_X(F, f^! \calD_R^\bullet) = 
\RDer \Hom_R(\RDer \Gamma F, \calD_R^\bullet)$.
Note that 
\[
\Ext^{n-q}(F, \omega_X) = 
\homology^{n-q}(\RDer \Hom_X(F, f^! \calD_R^\bullet[-n-1]))
=
\homology^{-q-1}(\RDer \Hom_X(F, f^! \calD_R^\bullet)).
\]
We therefore compute 
$\homology^{-q-1}(\RDer \Hom_R(\RDer \Gamma F, \calD_R^\bullet))$ using a
spectral sequence.
Fix a bounded-below complex $G^\bullet$ of $R$-modules that represents 
$\RDer \Gamma F$.
Without loss of generality, we can assume that $ \calD_R^\bullet$ is an 
injective resolution of $R[1]$. Hence we have a double complex
\[
E_0^{-i,-j} = \Hom_R(G^i,\calD_R^{-j})
\]
which is $0$ whenever $-j \not \in \{0, -1 \}$.
Taking the filtration by rows, we get a spectral sequence
${}_hE_1^{-i,-j} = \Hom_R(\homology^i(X, F),\calD_R^{-j})$
and, then
\[
{}_hE_\infty^{-i,-j} = 
{}_hE_2^{-i,-j} = 
\begin{cases}
\Ext_R^1(\homology^i(X, F), R), & -j=0;\\
\Hom_R(\homology^i(X, F), R), & -j=-1; \\
0, & \text{otherwise}.
\end{cases}
\]
The edge map is ${}_hE_\infty^{-i,0}  \to 
\homology^{-i}(\RDer \Hom_R(\RDer \Gamma F, \calD_R^\bullet))$; this is
injective, and the cokernel is 
${}_hE_\infty^{-i+1,-1}$.
Thus we get an exact sequence
\[
0 \to \Ext_R^1(\homology^{q+1}(X, F), R)
\to \Ext^{n-q}(F, \omega_X) \to 
\Hom_R(\homology^q(X, F), R) \to 0
\]
of $R$-modules.
\end{proof}

\begin{corollary}
\label{corollary:Extn}
With notation as above,
\begin{enumerate}
\item
\label{corollary:Extn:strSh}
$\Ext^{n}(\strSh_X, \omega_X) \simeq R$ as $R$-modules.

\item
\label{corollary:Extn:IZ}
$\Ext^{n}(I_Z, \omega_X) = 0$.
\end{enumerate}

\end{corollary}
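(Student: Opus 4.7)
The plan is to apply Lemma~\ref{lemma:UnivCoeff} with $q = 0$ to the sheaves $F = \strSh_X$ and $F = I_Z$ respectively, and then use the cohomological computations already established to see that the outer terms of the resulting three-term exact sequence collapse in each case.

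For part~\eqref{corollary:Extn:strSh}, specializing Lemma~\ref{lemma:UnivCoeff} yields the exact sequence
\[
0 \to \Ext_R^1(\homology^1(X, \strSh_X), R) \to \Ext^n(\strSh_X, \omega_X) \to \Hom_R(\homology^0(X, \strSh_X), R) \to 0.
\]
By Lemma~\ref{lemma:strShXfree}, $\homology^1(X, \strSh_X)$ is a free $R$-module, and since $R$ is a PID every free module has projective dimension zero, so the leftmost term vanishes. By Proposition~\ref{proposition:HZeroWithSec}, $\homology^0(X, \strSh_X) = R$, so the rightmost term is $\Hom_R(R,R) = R$. Hence $\Ext^n(\strSh_X, \omega_X) \simeq R$.

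For part~\eqref{corollary:Extn:IZ}, I would apply Lemma~\ref{lemma:UnivCoeff} to $F = I_Z$, again with $q = 0$, obtaining
\[
0 \to \Ext_R^1(\homology^1(X, I_Z), R) \to \Ext^n(I_Z, \omega_X) \to \Hom_R(\homology^0(X, I_Z), R) \to 0.
\]
Here Lemma~\ref{lemma:IZfree} simultaneously supplies both vanishings: $\homology^0(X, I_Z) = 0$ kills the rightmost term, while $\homology^1(X, I_Z)$ being free (again combined with $R$ being a PID) kills the leftmost term. Thus $\Ext^n(I_Z, \omega_X) = 0$.

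There is no real obstacle here; the corollary is essentially a bookkeeping consequence of the universal-coefficient sequence of Lemma~\ref{lemma:UnivCoeff} together with the freeness results in Lemmas~\ref{lemma:strShXfree} and~\ref{lemma:IZfree} and the identification $\homology^0(X, \strSh_X) = R$ from Proposition~\ref{proposition:HZeroWithSec}. The only point requiring a moment's care is to note explicitly that the PID hypothesis enters twice: once through the earlier lemmas (which used it to pass from torsion-free to free), and once directly here to ensure that $\Ext_R^1(-, R)$ vanishes on the free modules in question.
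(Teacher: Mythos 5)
Your proof is correct and follows essentially the same route as the paper: apply Lemma~\ref{lemma:UnivCoeff} with $q=0$ to $\strSh_X$ and to $I_Z$, then kill the outer terms using Lemmas~\ref{lemma:strShXfree} and~\ref{lemma:IZfree} and the identification $\homology^0(X,\strSh_X)=R$. (One small remark: $\Ext^1_R(F,R)=0$ for a free module $F$ over any ring, since free implies projective; the PID hypothesis is only needed earlier to pass from torsion-free to free.)
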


\begin{proof}

\eqref{corollary:Extn:strSh}:
Apply Lemma~\ref{lemma:UnivCoeff} with $q=0$ and $F=\strSh_X$.
By Lemma~\ref{lemma:strShXfree}, 
$\Ext_R^{1}(\homology^1(X, \strSh_X ), R ) = 0$, so
$\Ext^{n}(\strSh_X, \omega_X) \simeq \Hom_R(\homology^0(X, \strSh_X),R)
\simeq R$.

\eqref{corollary:Extn:IZ}:
Apply Lemma~\ref{lemma:UnivCoeff} with $q=0$ and $F=I_Z$.
Now use Lemma~\ref{lemma:IZfree}.
\end{proof}

We can now prove Proposition~\ref{proposition:toprowPID}.
\begin{proof}[Proof of Proposition~\protect{\ref{proposition:toprowPID}}]
Corollary~\ref{corollary:Extn} establishes~\eqref{equation:ExtOXExtIX}.
Hence the proposition follows from~\eqref{equation:fiveTermOnExt}.
\end{proof}

We conclude this section by giving a proof of 
Lemma~\ref{lemma:KoszulGen}.

\begin{proof}[Proof of Lemma~\protect{\ref{lemma:KoszulGen}}]
We prove this by induction on $m$. When $m=1$, we have 
\begin{equation}
\label{equation:oneTermKosz}
0 \to A \stackrel{\cdot a_1}\to A \to A/a_1A \to 0,
\end{equation}
which, when applied $\Hom_A(-,A )$, gives
\[
0 \to \Hom_A(A,A ) \to \Hom_A(A,A ) \to \Ext_A^1(A/a_1A,A ) \to 0.
\]
Then~\eqref{equation:oneTermKosz} corresponds to the image of
$\mathrm{id}_A$ inside $\Ext_A^1(A/a_1A,A )$ 
(see, e.g.,~\cite[XIV, Theorem~1.1]{CartanEilenberg1999})
which is a generator of $\Ext_A^1(A/a_1A,A )$ since 
$\Hom_A(A,A )$ is a cyclic $A$-module generated by 
$\mathrm{id}_A$.

Now assume that $m>1$ and that the assertion holds for the sequence 
$\bfb := a_1, \ldots, a_{m-1}$.
The Koszul complex $K_\bullet(\bfa)$ (without the augmentation map
$K_0(\bfa) \to A/\bfa A$) is the mapping cone of the map 
\[
K_\bullet(\bfb)\stackrel{\cdot a_m}\to K_\bullet(\bfb)
\]
of complexes.
Then we have an exact sequence 
\[
0 \to K_\bullet(\bfb) \to K_\bullet(\bfa) \to
K_\bullet(\bfb)[-1] \to 0
\]
of complexes.
An element $\xi \in \Ext^{m-1}_A(A/\bfb A, A )$ is given by a map
$K_\bullet(\bfb) \to A[-m+1]$ in the derived category of $A$-modules.
(Here $A$ is thought of as the complex $0 \to A \to 0$, with $A$ in
homological position $0$.)
The image of $\xi$ under the connecting morphism 
$\Ext^{m-1}_A(A/\bfb A, A ) \to \Ext^{m}_A(A/\bfa A, A )$
is given by the composite map
\[
K_\bullet(\bfa) \to K_\bullet(\bfb)[-1] \to A[-m].
\]
In particular, if we take the identity map $K_{m-1}(\bfb) \to A$ (which
gives a generator of $\Ext^{m-1}_A(A/\bfb A, A )$), we get the identity
map $K_{m}(\bfa) \to A$ (which
gives a generator of $\Ext^{m}_A(A/\bfa A, A )$).
\end{proof}

\section{Example: the projective plane}

In this section, we exhibit 
$\projective^2_\ints$
as an example of the strategy described in the
previous section. We can get the
middle row of~\eqref{equation:pullbackOfExt} directly
(for a suitable choice of $X_2$ and section $Z$).
We will then show that the top row of~\eqref{equation:pullbackOfExt} 
equals the Koszul complex in $\Ext^2(\strSh_X,\omega_X)$.

Write $X = \projective^2_\ints$. 
Let $x_0, x_1, x_2$ be homogeneous coordinates on $X$ and $X_2$ be the
subscheme defined by $(x_1, x_2)$. Take $Z = X_2$.
In other words, $\calF = \strSh_X(1)^{\oplus 2} = 
\strSh_X(1) \epsilon_1 \oplus \strSh_X(1) \epsilon_2$ 
and $\sigma = (x_1, x_2)
= x_1 \epsilon_1 + x_2 \epsilon_2$.
Then
$\omega_X = \strSh_X(-3) = 
\strSh_X (-3)\cdot (\epsilon_1^* \wedge \epsilon_2^* \wedge \epsilon_3^*)$.
Since $X_2 = \Spec \ints$, $\omega_{X_2}=\strSh_{X_2}$.
Abbreviate $e_i \wedge e_j = e_{ij}$ etc.
Hence the bottom and the middle rows of~\eqref{equation:pullbackOfExt} are
equal to
\[
\xymatrix@C=1em{
0 \ar[r] & \strSh_X(-3 )\epsilon_{123}^*
\ar[rrr]^{ \begin{bmatrix} x_1 \\ -x_2 \end{bmatrix} } &&&
{\begin{matrix}
\strSh_X(-2 )\epsilon_{23}^* \\
\oplus \\
\strSh_X(-2)\epsilon_{13}
\end{matrix}}
\ar[rrr]^{ \begin{bmatrix} x_2 & x_1 \end{bmatrix} } &&&
\strSh_X(-1)\epsilon_3^*
\ar[r] &
\strSh_Z\epsilon_3^*
\ar[r] & 0
}
\]
The Koszul complex on the section $\sum_{i=1}^3 x_i \epsilon_i^*$ of 
$\oplus_{i=1}^3 \strSh_X \epsilon_i^* = \strSh_X(-1)^{\oplus 3}$ is
\[
\xymatrix@C=1em{
0 \ar[r] & \strSh_X(-3)\epsilon_{123}^*
\ar[rr]^{ \begin{bmatrix} x_1 \\ -x_2 \\ x_3\end{bmatrix} } &&
{\begin{matrix}
\strSh_X(-2)\epsilon_{23}^* \\
\oplus \\
\strSh_X(-2)\epsilon_{13}^* \\
\oplus \\
\strSh_X(-2)\epsilon_{12}^*
\end{matrix}}
\ar[rrrrr]^{ \begin{bmatrix} 
x_2 & x_1 & 0 \\
- x_3 & 0 & x_1 \\
0 & - x_3 & -x_2 \\
\end{bmatrix} } &&&&&
{
\begin{matrix}
\strSh_X(-1)\epsilon_3^* \\
\oplus \\
\strSh_X(-1)\epsilon_2^* \\
\oplus \\
\strSh_X(-1)\epsilon_1^*
\end{matrix}
}
\ar[rrrr]^{ \begin{bmatrix} x_3 & x_2 & x_1 \end{bmatrix} } &&&&
\strSh_X
\ar[r] & 0
}
\]

Therefore we get a commutative diagram; for the sake of brevity
we suppress the basis elements and the matrices.
The right-most vertical map is the canonical surjection, while the other
maps are the projection maps to the respective summands.
\begin{equation}
\label{equation:P2Koszulmap}
\xymatrix{ 
0 \ar[r] & \strSh_X(-3) \ar[r] \ar@{=}[d] &
\strSh_X(-2)^{\oplus 3} \ar[r] \ar[d]^{\mathrm{proj}} &
\strSh_X(-1)^{\oplus 3} \ar[r] \ar[d]^{\mathrm{proj}} &
\strSh_X \ar[r] \ar[d]^{\mathrm{can}} &
0
\\
0 \ar[r] & \strSh_X(-3) \ar[r] &
\strSh_X(-2)^{\oplus 2} \ar[r]  &
\strSh_X(-1) \ar[r]  &
\strSh_Z \ar[r]  &
0
}
\end{equation}

Now use~\cite[Proposition~III.5.1]{MacLHomology63} to see that the
push-forward of the top row of~\eqref{equation:P2Koszulmap}
along the map $\mathrm{id}_{\strSh_X(-3)}$ is the same as the pull-back of
the bottom row of~\eqref{equation:P2Koszulmap}
along the canonical map $\strSh_X \to \strSh_Z$.
Hence the top row of~\eqref{equation:P2Koszulmap}
is the top row of~\eqref{equation:pullbackOfExt}.

\section{Over a field}
\label{section:fld}

In this section, we show that the strategy described in the previous
section can be carried out over a field. This result will be used to prove
the result for DVR.

\begin{theorem}
\label{theorem:fld}
Let $R$ be a field.
Let $X$ be a $n$-dimensional smooth projective $R$-subscheme of 
$\projective^N_R$.
Assume that $X$ has an $R$-rational point $Z$.
Write $S = R[x_0, \ldots, x_N]$ for the homogeneous coordinate ring of
$\projective^N_R$. 
Then:

\begin{enumerate}

\item
\label{theorem:fld:existsfk}
There exist homogeneous polynomials $ f_1, \ldots,  f_n \in S$ 
such that for every $1 \leq k \leq n$, 
$ f_1, \ldots,  f_k$ define a complete intersection inside $X$ containing $Z$.
Moreover, $Z$ is an isolated point of the subscheme $X_n$ defined by 
$ f_1, \ldots,  f_n \in S$.

\item
\label{theorem:fld:extgen}
The top row of~\eqref{equation:pullbackOfExt} generates
$\Ext_X^n(\strSh_X, \omega_X)$ as an $R$-vector space.
Moreover, the base-change of this exact sequence generates
$\Ext_{X_S}^n(\strSh_{X_S}, \omega_{X_S})$
for all ring maps $R \to S$ (where $X_S = X \times_{\Spec R} \Spec S$).
\end{enumerate}
\end{theorem}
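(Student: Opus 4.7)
The plan is to treat parts~\eqref{theorem:fld:existsfk} and~\eqref{theorem:fld:extgen} separately. For~\eqref{theorem:fld:existsfk}, I would construct $f_1, \ldots, f_n$ inductively via graded prime avoidance, simultaneously enforcing a transversality condition at $Z$ so that $Z$ appears in $X_n$ with its reduced scheme structure. For~\eqref{theorem:fld:extgen}, I would verify~\eqref{equation:ExtOXExtIX} using Serre duality (which is clean over a field), invoke~\eqref{equation:fiveTermOnExt} together with Proposition~\ref{proposition:midrow}, and handle the base-change assertion via flatness of $R \to S$.

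For~\eqref{theorem:fld:existsfk}, let $\frakp_Z \subset S$ be the homogeneous ideal of $Z$ and $\frakm \subset \strSh_{X,Z}$ the maximal ideal of the regular local ring of $X$ at $Z$, of dimension $n$. The induction hypothesis is that $f_1, \ldots, f_{k-1} \in \frakp_Z$ have been chosen homogeneous so that $Y_{k-1} := V(f_1, \ldots, f_{k-1}) \cap X$ is a complete intersection of codimension $k-1$ in $X$ and their images in $\frakm/\frakm^2$ are linearly independent. Let $\frakq_1, \ldots, \frakq_r$ be the homogeneous primes of $S$ corresponding to the irreducible components $C_1, \ldots, C_r$ of $Y_{k-1}$; since each $\dim C_i = n-k+1 \geq 1$, no $C_i$ equals $\{Z\}$, so $\frakp_Z \not\subset \frakq_i$. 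Graded prime avoidance then gives, for $d$ sufficiently large, an element $f_k \in (\frakp_Z)_d$ avoiding every $\frakq_i$. For large $d$ the natural map $(\frakp_Z)_d \to \frakm/\frakm^2$ is also surjective, so I can further arrange that $f_k$ has nonzero image in $\frakm/(\frakm^2 + (f_1, \ldots, f_{k-1}))$. Cohen-Macaulayness of $X$ then guarantees that $Y_k = V(f_1, \ldots, f_k) \cap X$ is a codimension-$k$ complete intersection containing $Z$. At $k = n$, the scheme $X_n$ is zero-dimensional, and the transversality condition forces $(\strSh_{X_n})_Z = R$, making $Z$ a reduced connected component of $X_n$.

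For~\eqref{theorem:fld:extgen}, Lemma~\ref{lemma:HZeroWithSecFld} gives $\homology^0(X, \strSh_X) = R$, whence Serre duality on the smooth projective $X/R$ yields $\Ext_X^n(\strSh_X, \omega_X) \simeq \homology^0(X, \strSh_X)^* = R$. From $0 \to I_Z \to \strSh_X \to \strSh_Z \to 0$ and the identification $\homology^0(X, \strSh_X) = R = \homology^0(Z, \strSh_Z)$, we obtain $\homology^0(X, I_Z) = 0$, and hence $\Ext_X^n(I_Z, \omega_X) \simeq \homology^0(X, I_Z)^* = 0$. This establishes~\eqref{equation:ExtOXExtIX}, so the exact sequence~\eqref{equation:fiveTermOnExt} together with Proposition~\ref{proposition:midrow}\eqref{proposition:midrow:midrow} shows that the top row of~\eqref{equation:pullbackOfExt} generates the cyclic $R$-module $\Ext_X^n(\strSh_X, \omega_X)$. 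For the base-change assertion, any ring map from a field $R$ is flat, cohomology of a proper morphism commutes with flat base change, and consequently so does $\Ext_X^\bullet(-, \omega_X)$ applied to a coherent sheaf on the smooth projective $X$; a generator therefore remains a generator after $-\otimes_R S$.

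The main anticipated obstacle is~\eqref{theorem:fld:existsfk} over a finite field $R$, where ``generic'' choices in a single fixed degree are not available; this is resolved by passing to $d \gg 0$, where graded prime avoidance applies and the two linear conditions (avoidance of the $\frakq_i$ and prescribed image in $\frakm/\frakm^2$) can be decoupled.
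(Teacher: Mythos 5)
Your proposal is correct and follows essentially the same route as the paper: part~\eqref{theorem:fld:existsfk} by inductive graded prime avoidance (choosing each $f_k$ to be a non-zero-divisor on the previous complete intersection while imposing the transversality condition at $Z$, which the paper phrases as $f_k \in \frakp \setminus \frakp^2$), and part~\eqref{theorem:fld:extgen} by verifying~\eqref{equation:ExtOXExtIX} via Serre duality using $\homology^0(X,\strSh_X)=R$ and $\homology^0(X,I_Z)=0$, then concluding from~\eqref{equation:fiveTermOnExt}, Proposition~\ref{proposition:midrow}, and flat base change. Your extra care about working in large degree $d$ over finite fields only fills in a detail the paper leaves implicit.
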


For a homogeneous ideal $J$ of 
$R[x_0, \ldots, x_N]$, we write 
$J^\mathrm{sat} = \bigcup_i \left(J : (x_0, \ldots, x_N)^i\right)$, i.e,
the ideal of the projective subscheme of $\projective_R^N$ defined by
$J$.

\begin{proof}[Proof of Theorem~\protect{\ref{theorem:fld}}]
Note that $Z$ is defined by $n$ linearly independent linear forms. After a
suitable change of coordinates, we may assume that $Z$ is defined by 
$\frakp := (x_1, \ldots, x_n)$.

\eqref{theorem:fld:existsfk}:
We prove this by induction on $k$. Let $k=1$. 
Note that $I_{X}$ is a prime ideal of height $N-n< N$.
Hence $\frakp \nsubseteq \left(\frakp^2  \cup I_{X}\right)$
and the assertion holds for $k=1$.

Now assume that $1 < k \leq n$ and that we have found $f_1, \ldots, f_{k-1}$ with
the desired properties.
Since $X$ is non-singular, 
$\bar{S}/I_{X}$ 
and, therefore,
$\bar{S}/(I_{X}+(f_1, \ldots, f_{k-1}))^{\mathrm{sat}}$
are Cohen-Macaulay except possibly at the
irrelevant ideal.
Since $\height (I_{X}+(f_1, \ldots, f_{k-1})) = N-n+k-1 < N$,
we can find an
$f_k \in \frakp \minus \frakp^2$ that is a non-zero-divisor in 
$\bar{S}/(I_{X}+(f_1, \ldots, f_{k-1}))^{\mathrm{sat}}$.

\eqref{theorem:fld:extgen}:
Write $I_Z$ for the ideal sheaf of $Z$ in $X$.
By Lemma~\ref{lemma:HZeroWithSecFld}, $\homology^0(X, \strSh_X) = R$, and
arguing as in the proof of Lemma~\ref{lemma:IZfree}, 
$\homology^0(X, I_Z) = 0$.
Hence, by Serre duality~\cite[Theorem~III.7.6]{HartAG},
\eqref{equation:ExtOXExtIX} holds; therefore by
\eqref{equation:fiveTermOnExt}, we get the first assertion.
Since $S$ is $R$-flat, we get the second assertion by flat base-change.
\end{proof}

(The statement of Serre duality in~\cite[Theorem~III.7.6]{HartAG} assumes
that the base field is algebraically closed. However, this assumption is
used only to see that local rings on projective spaces (over a field) are
regular. This holds over arbitrary fields, and not just over algebraically
closed fields.)

\section{Over a DVR}
\label{section:dvr}

In this section, we show that the strategy described in the previous
section can be carried out over a DVR. We assume that 
$X \to \Spec R$ has a section (with $R$ a DVR) has a section $Z$,
and show that there exists a complete intersection $X_n \subseteq X$ in
which $Z$ is a connected component.

\begin{theorem}
\label{theorem:dvrDetails}
Let $(R,\frakm, \Bbbk)$ be a DVR.
Let $X$ be a projective $R$-subscheme of 
$\projective^N_R$. Assume that the structure morphism $X \to \Spec R$ is
smooth and of relative dimension $n$,
and that it has a section $\Spec R \to Z \subseteq X$.
Write $S = R[x_0, \ldots, x_N]$ for the homogeneous coordinate ring of
$\projective^N_R$. 
Then:

\begin{enumerate}

\item
\label{theorem:dvrDetails:existsfk}
There exist homogeneous polynomials $\tilde f_1, \ldots, \tilde f_n \in S$ 
such that they define a complete intersection $X_n$ inside $X$.

\item
\label{theorem:dvrDetails:flatZerodim}
The natural map $X_n \to \Spec R$ is flat and has zero-dimensional fibres.

\item
\label{theorem:dvrDetails:Zcomp}
$Z$ is a connected component of $X_n$. Hence $\strSh_Z$ is a direct summand
of $\strSh_{X_n}$.

\item
\label{theorem:dvrDetails:extgen}
The top row of~\eqref{equation:pullbackOfExt} generates
$\Ext_X^n(\strSh_X, \omega_X)$ as a free $R$-module.
Moreover, the base-change of this exact sequence generates
$\Ext_{X_S}^n(\strSh_{X_S}, \omega_{X_S})$
for all ring maps $R \to S$ (where $X_S = X \times_{\Spec R} \Spec S$).
\end{enumerate}
\end{theorem}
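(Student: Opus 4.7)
The plan is to reduce everything to the field case (Theorem~\ref{theorem:fld}) applied to the special fibre $X_\Bbbk := X \times_{\Spec R} \Spec \Bbbk$, which is smooth projective of dimension $n$ over $\Bbbk$ and contains the $\Bbbk$-rational point $Z_\Bbbk$. Since $R$ is local, the section $Z \to \projective^N_R$ corresponds to a rank-one $R$-free quotient of $R^{N+1}$, so after an $R$-linear automorphism of $\projective^N_R$ I may assume that $Z$ is the standard point $V(x_1, \ldots, x_N)$. This guarantees that the homogeneous ideal $I_Z \subset S$ surjects onto $I_{Z_\Bbbk} \subset \Bbbk[x_0, \ldots, x_N]$ in every degree. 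Theorem~\ref{theorem:fld}(\ref{theorem:fld:existsfk}) applied to $X_\Bbbk$ then supplies homogeneous $\bar f_1, \ldots, \bar f_n \in I_{Z_\Bbbk}$ defining a complete intersection in $X_\Bbbk$ with $Z_\Bbbk$ as an isolated zero; I lift these to homogeneous $\tilde f_1, \ldots, \tilde f_n \in I_Z$ of the same degrees, so that the $\tilde f_i$ automatically vanish on $Z$.

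For parts (\ref{theorem:dvrDetails:existsfk}) and (\ref{theorem:dvrDetails:flatZerodim}), I work on affine opens $U = \Spec A$ of $X$. Each such $A$ is $R$-flat (since $X$ is $R$-smooth), and the dehomogenized images of the $\tilde f_i$ reduce modulo $\frakm$ to a regular sequence on $A \otimes_R \Bbbk$. The local criterion of flatness then yields that $\tilde f_1, \ldots, \tilde f_n$ form an $A$-regular sequence and that $A/(\tilde f_1, \ldots, \tilde f_n)$ is $R$-flat. Gluing over an affine cover, $X_n \subset X$ is cut out by a regular sequence, is $R$-flat, and has zero-dimensional fibres: the special fibre is zero-dimensional by the field case, and the generic fibre inherits codimension $n$ in $X_K$ from the Koszul complex by flat base change along $R \to K := \Frac R$.

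For (\ref{theorem:dvrDetails:Zcomp}), $X_n \to \Spec R$ is proper and quasi-finite, hence finite, so $X_n = \Spec B$ with $B$ a finite flat (hence free) $R$-module. The inclusion $Z \hookrightarrow X_n$ gives a retraction $\pi : B \to R$ split by $\sigma : R \to B$; set $e := \sigma(1) \in B$. On the special fibre, $\bar e \in B \otimes_R \Bbbk$ is the idempotent cutting out $Z_\Bbbk$ as a connected component of $X_{n,\Bbbk}$ (field case); on the generic fibre, $X_{n,K}$ is a zero-dimensional $K$-scheme, so $e \otimes 1$ is idempotent there too. Since $B$ is torsion-free over the domain $R$, the natural map $B \hookrightarrow B \otimes_R K$ is injective, forcing $e^2 - e = 0$ in $B$. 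Hence $B = eB \times (1-e)B$ with $eB \cong R$ corresponding to $Z$, exhibiting $Z$ as a connected component of $X_n$.

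For (\ref{theorem:dvrDetails:extgen}), since $R$ is a DVR (a PID that is not a field), Proposition~\ref{proposition:toprowPID} combined with Corollary~\ref{corollary:Extn}(\ref{corollary:Extn:strSh}) shows that the top row of~\eqref{equation:pullbackOfExt} generates $\Ext_X^n(\strSh_X, \omega_X) \cong R$ as a free rank-one $R$-module. For the base change assertion, $\widetilde K_\bullet$ consists of locally free sheaves and so pulls back correctly; flatness of $X_n / R$ ensures the pulled-back Koszul complex remains a resolution of $\strSh_{X_{n,S}}$, and the pull-back constructions in~\eqref{equation:pullbackOfExt} commute with arbitrary base change. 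Cohomology and base change (applicable because $X / R$ is proper and flat, $\homology^n(X, \omega_X)$ is free of rank $1$, and $\homology^{n+1}(X, \omega_X) = 0$ for relative-dimension reasons) then gives $\Ext_{X_S}^n(\strSh_{X_S}, \omega_{X_S}) \cong S$, with the base-changed top row as a generator. The main obstacle I anticipate is arranging the lifts $\tilde f_i$ so that they both vanish on $Z$ \emph{and} form a regular sequence on all of $X$: the first is handled by the coordinate change, the second by the local flatness criterion applied to the $R$-flat algebras on affine patches of $X$. The idempotent-lifting step in (\ref{theorem:dvrDetails:Zcomp}) looks delicate at first sight, but torsion-freeness of $B$ over the domain $R$ closes it cleanly.
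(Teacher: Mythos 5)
Your treatment of parts (\ref{theorem:dvrDetails:existsfk}), (\ref{theorem:dvrDetails:flatZerodim}) and (\ref{theorem:dvrDetails:extgen}) matches the paper's route: produce the $f_i$ on the closed fibre via Theorem~\ref{theorem:fld}, lift them inside $I_Z=(x_1,\dots,x_N)$, and use the local criterion of flatness (\cite[Corollary to Theorem~22.5]{MatsCRT89}) to get a regular sequence cutting out an $R$-flat, fibrewise zero-dimensional $X_n$; part (\ref{theorem:dvrDetails:extgen}) then follows from Proposition~\ref{proposition:toprowPID} together with a base-change argument (the paper reduces to the residue and fraction fields rather than invoking cohomology-and-base-change, but your route works once exactness of the base-changed top row is secured, which you do via torsion-freeness).

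The genuine gap is in part (\ref{theorem:dvrDetails:Zcomp}). Writing $X_n=\Spec B$ with $B$ finite flat over $R$ and $\pi\colon B\to R$ the surjection defining $Z$, you set $e:=\sigma(1)$ for a splitting $\sigma$ of $\pi$ and deduce $e^2=e$ from the fibres. This does not work as stated: if $\sigma$ is the structure map then $e=1_B$, and if $\sigma$ is merely an $R$-module splitting then $e$ is an arbitrary preimage of $1$, and there is no reason its image in $B\otimes_R K$ (a product of Artinian local rings, most of whose elements are not idempotent --- zero-dimensionality of $X_{n,K}$ is irrelevant here) or in $B\otimes_R\Bbbk$ should be the component idempotent of $Z$. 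More fundamentally, idempotents need not lift along $B\to B\otimes_R\Bbbk$ for a non-complete DVR (the map $\ints_{(5)}[i]\to\mathbb{F}_5\times\mathbb{F}_5$ is the standard example), so knowing that $Z_\Bbbk$ is a connected component of $(X_n)_\Bbbk$ is not by itself enough; some new input is required. The paper supplies it in Proposition~\ref{proposition:Zcomp} by a multiplicity argument: at the closed point $z$ of $Z$, the local ring $A=\strSh_{X_n,z}$ is a one-dimensional Cohen--Macaulay $R$-flat local ring with $A/\frakm A=\Bbbk$ --- this last equality uses that $Z_s$ is a \emph{reduced} isolated point of $(X_n)_s$, i.e.\ the transversality $f_k\notin\frakp^2+I_{X_{k-1}}$ tracked in Proposition~\ref{proposition:div}, which your phrase ``isolated zero'' elides --- whence $e(\frakn,A)\le e(\frakm A,A)=1$, so $A$ is regular, hence a domain, and the surjection $A\to R$ of one-dimensional domains is an isomorphism. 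Thus $Z\hookrightarrow X_n$ is an isomorphism near $z$ and $Z$ is a connected component. You need this argument, or an equivalent substitute, to repair part (\ref{theorem:dvrDetails:Zcomp}).
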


Various parts of the above theorem will be proved as separate lemmas /
propositions, in more generality than stated in the theorem: 
\eqref{theorem:dvrDetails:existsfk}
and~\eqref{theorem:dvrDetails:flatZerodim} as
Proposition~\ref{proposition:div}; \eqref{theorem:dvrDetails:Zcomp}
as Proposition~\ref{proposition:Zcomp}; finally
\eqref{theorem:dvrDetails:extgen} is proved.
We start with an observation.

\begin{proposition}
Without loss of generality, $Z$ is defined by the ideal 
$(x_1, \ldots, x_N)$.
\end{proposition}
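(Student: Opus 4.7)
The plan is to realize the claim as an $R$-linear change of coordinates on $\projective^N_R$, governed by the universal property of projective space. By that universal property, the composite morphism $\Spec R \stackrel{Z}{\to} X \hookrightarrow \projective^N_R$ corresponds to a surjection $R^{N+1} \twoheadrightarrow L$ for some invertible $R$-module $L$. Since $R$ is a DVR, every invertible module is trivial, so $L \cong R$, and the section is determined by a sequence $(a_0, a_1, \ldots, a_N) \in R^{N+1}$ that generates $R$ as an ideal.

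Next, because $R$ is local, at least one $a_i$ must be a unit; after permuting coordinates I may assume $a_0 \in R^\times$. Rescaling the first row by $a_0^{-1}$ and then subtracting $a_i$ times this new first row from row $i$ for each $i \geq 1$ produces an explicit matrix $g \in \GL_{N+1}(R)$ sending $(a_0, \ldots, a_N)^{\mathrm{T}}$ to $(1, 0, \ldots, 0)^{\mathrm{T}}$. The corresponding $R$-linear automorphism of $\projective^N_R$ carries $Z$ to the point $[1 : 0 : \cdots : 0]$, whose saturated homogeneous ideal in $S$ is $(x_1, \ldots, x_N)$.

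Finally, I would replace $X$ by its image under this automorphism: this leaves intact the hypotheses that $X \to \Spec R$ is smooth projective of relative dimension $n$ and that $Z$ is a section, and embeds $X$ into $\projective^N_R$ in a way that realizes the desired form of the ideal of $Z$. So after relabeling we may indeed assume that $Z$ is cut out by $(x_1, \ldots, x_N)$. There is no real obstacle in this reduction; the only point to verify is that a unimodular row over a local ring can be completed to an element of $\GL_{N+1}$, which is immediate from the presence of a unit entry.
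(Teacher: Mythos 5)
Your argument is correct and is essentially the paper's own: both identify the section with a unimodular tuple $(a_0,\ldots,a_N)$ having a unit entry because $R$ is local (the paper by passing to the affine chart where $x_0$ is invertible and computing the ideal $(y_1-a_1,\ldots,y_N-a_N)$ there, you via the universal property of $\projective^N_R$), and then move $Z$ to the coordinate point by an explicit $R$-linear change of coordinates. The two write-ups differ only in packaging, not in substance.
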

 
\begin{proof}
Without loss of generality, the homogeneous coordinate $x_0$ does not
vanish at the closed point of $Z$. Hence it does not vanish at the generic
point of $Z$ also. Therefore we may assume that $Z$ lies inside the open
subset $\{x_0 \neq 0\} \simeq \affine^N_R$. Write $y_i = \frac{x_i}{x_0}$.

Write $Z = \Spec(R[y_1, \ldots, y_N]/I)$, or, equivalently,
$R[y_1, \ldots, y_N]/I = R$. Let $a_i \in R$ be the image of $y_i$, $1 \leq
i \leq N$. Then $(y_1-a_1, \ldots, y_N-a_N ) \subseteq I$.
Hence $I = (y_1-a_1, \ldots, y_N-a_N )$.
Therefore $Z$ is defined inside $\projective^N_R$ by 
$(x_1-a_1x_0, \ldots, x_N-a_nx_0 )$. After a change of coordinates, we may
assume that this is the ideal $(x_1, \ldots, x_N)$.
\end{proof}

Write $s$ for the closed point $\Spec \Bbbk \in \Spec R$.
For subschemes $Y$ of $\projective^N_R$, we write $Y_{s}$ for its
closed fiber over $R$.     

\begin{proposition}
\label{proposition:divOnClFib}
Write $\bar{S} := \Bbbk[x_0, \ldots, x_N]$
and $\frakp := (x_1, \ldots, x_N)\bar S$.
There exist homogeneous 
$f_1, \ldots, f_n \in \frakp \minus \frakp^2$
such that for all $1 \leq k \leq n$, $f_k$ is a non-zero-divisor in 
$\bar{S}/(I_{X_{s}}+(f_1, \ldots, f_{k-1}))^{\mathrm{sat}}$.
\end{proposition}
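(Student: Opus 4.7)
The plan is to reduce the statement to Theorem~\ref{theorem:fld}\eqref{theorem:fld:existsfk} applied to the closed fibre $X_s$ over the residue field $\Bbbk$. First, I would observe that base change along $R \to \Bbbk$ preserves smoothness and dimension, so $X_s$ is a smooth projective $\Bbbk$-subscheme of $\projective^N_\Bbbk$ of dimension $n$. Similarly, since the previous proposition allows us to assume $Z = V(x_1, \ldots, x_N) \subseteq \projective^N_R$, the closed fibre $Z_s$ is a $\Bbbk$-rational point of $X_s$ cut out by the homogeneous ideal $\frakp = (x_1, \ldots, x_N)\bar{S}$.

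Next, I would apply Theorem~\ref{theorem:fld}\eqref{theorem:fld:existsfk} to the pair $(X_s, Z_s)$ inside $\projective^N_\Bbbk$. The theorem produces homogeneous $f_1, \ldots, f_n \in \bar{S}$ such that each partial sequence $f_1, \ldots, f_k$ defines a complete intersection in $X_s$ containing $Z_s$. Unwinding the inductive proof of Theorem~\ref{theorem:fld}\eqref{theorem:fld:existsfk}: since $X_s$ is non-singular, $\bar S / I_{X_s}$ is Cohen--Macaulay away from the irrelevant ideal, and so is the saturation $\bar{S}/(I_{X_s} + (f_1, \ldots, f_{k-1}))^{\mathrm{sat}}$ at each stage. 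The height of $I_{X_s} + (f_1, \ldots, f_{k-1})$ in $\bar{S}$ is $N - n + k - 1 < N$, so by prime avoidance one can pick $f_k \in \frakp \smallsetminus \frakp^2$ avoiding all associated primes of this saturation, giving a non-zero-divisor modulo it. That is exactly the conclusion needed here.

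The only subtlety is making sure the ``$f_k \in \frakp \smallsetminus \frakp^2$'' condition is available as stated; this is what forces the use of prime avoidance applied to $\frakp^2$ together with the associated primes of the current saturation, which is legitimate because $\frakp \nsubseteq \frakp^2$ and (by the height bound) $\frakp$ is not contained in any associated prime. The property that each $f_k$ is a non-zero-divisor in the saturation is then immediate.

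I do not anticipate a serious obstacle: the statement is essentially Theorem~\ref{theorem:fld}\eqref{theorem:fld:existsfk} repackaged for the closed fibre with the additional $\frakp \smallsetminus \frakp^2$ refinement extracted from its proof. The only bookkeeping point is to make the change-of-coordinates step --- placing $Z$ at $V(x_1, \ldots, x_N)$ --- before passing to the closed fibre, so that $Z_s$ is also defined by the same linear forms and $\frakp$ in $\bar S$ is literally the maximal ideal at $Z_s$.
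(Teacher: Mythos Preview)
Your proposal is correct and follows exactly the paper's own approach: the paper's proof consists of the single line ``See the proof of Theorem~\ref{theorem:fld}\eqref{theorem:fld:existsfk},'' and what you have written is precisely the unwinding of that proof applied to the closed fibre $X_s$ with the rational point $Z_s$, including the prime-avoidance step that produces each $f_k \in \frakp \smallsetminus \frakp^2$ as a non-zero-divisor on the successive saturations.
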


\begin{proof}
See the proof of Theorem~\ref{theorem:fld}\eqref{theorem:fld:existsfk}.
\end{proof}

\begin{proposition}
\label{proposition:div}
Assume that $(R, \frakm, \Bbbk)$ is a local Cohen-Macaulay ring. 
For $1 \leq k \leq n$, write 
\[
f_k = x_0^{\deg f_k-1} l_k(x_1, \ldots, x_N) +  \text{terms of lower degree
in $x_0$},
\]
where $l_k$ is linear.
Let $\tilde l_k (x_1, \ldots, x_N)$ be a homogeneous linear lift of $l_k$
to $R[x_1, \ldots, x_N]$.
Consider any lift $\tilde f_k$ of $f_k$ to $R[x_0, \ldots, x_N]$ of the
form
\[
x_0^{\deg f_k-1} \tilde l_k(x_1, \ldots, x_N) +  \text{terms of lower degree
in $x_0$}.
\]
Let $X_k = 
X \cap D_{\tilde f_1 } \cap \cdots \cap D_{\tilde f_{k}}$.
(Here $D_{\tilde f_i }$ is the subscheme defined by ${\tilde f_1 }$ and the
intersection is the scheme-theoretic
intersection.)
Then
\begin{enumerate}

\item
\label{proposition:div:order}
$\tilde f_k \in (x_1, \ldots, x_N) \minus \left((x_1, \ldots, x_N)^2 + 
I_{X_{k-1}}\right)$.

\item
\label{proposition:div:nzd}
$X_k$ is Cohen-Macaulay and flat over $R$, of relative dimension $n-k$.

\end{enumerate}
\end{proposition}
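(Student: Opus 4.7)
The plan is to proceed by induction on $k$. The base case $k = 0$ is the trivial observation that $X_0 = X$ is $R$-flat Cohen-Macaulay of relative dimension $n$ by smoothness. For the inductive step, I assume $X_{k-1}$ is $R$-flat Cohen-Macaulay of relative dimension $n - k + 1$ and prove both assertions for $\tilde f_k$ and $X_k$.

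For \eqref{proposition:div:order}, the containment $\tilde f_k \in (x_1, \ldots, x_N)$ is immediate from the prescribed form: the term $x_0^{\deg f_k - 1}\tilde l_k$ lies in $(x_1, \ldots, x_N)$ since $\tilde l_k$ is linear in these variables, while every other term, of total degree $\deg f_k$ but of strictly lower $x_0$-degree, has $x_1, \ldots, x_N$-degree at least $2$. For the non-containment, I will argue by contradiction: if $\tilde f_k \in \frakp^2 + I_{X_{k-1}}$, then reducing modulo $\frakm$ and using $R$-flatness of $X_{k-1}$ (so that the saturated ideals are compatible with base change in the relevant graded degree) gives $f_k \in \frakp^2 \bar S + I_{X_{s,k-1}}$. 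To block this, I will strengthen Proposition \ref{proposition:divOnClFib} so that $f_k$ is additionally chosen outside $\frakp^2 \bar S + I_{X_{s,k-1}}^{\mathrm{sat}}$. This sum does not contain $\frakp$: otherwise graded Nakayama applied to the quotient $\bar S/I_{X_{s,k-1}}^{\mathrm{sat}}$ would force $\frakp \subseteq I_{X_{s,k-1}}^{\mathrm{sat}}$, contradicting $\dim X_{s,k-1} = n - k + 1 \geq 1$. Hence the prime-avoidance argument of Theorem \ref{theorem:fld}\eqref{theorem:fld:existsfk} can be extended to avoid this single non-prime ideal as well.

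For \eqref{proposition:div:nzd}, I apply the local criterion of flatness stalkwise. Since $X_k \to \Spec R$ is projective over a local base, every closed point $x \in X_k$ lies in the closed fiber $X_{s,k} \subseteq X_{s,k-1}$. The stalk $A = \strSh_{X_{k-1}, x}$ is $R$-flat and Cohen-Macaulay by induction, and the image of $\tilde f_k$ in $A/\frakm A = \strSh_{X_{s,k-1}, x}$ equals $f_k$, a non-zerodivisor by Proposition \ref{proposition:divOnClFib}. The local flatness criterion then yields that $\tilde f_k$ is a non-zerodivisor on $A$ and that $A/\tilde f_k A$ is $R$-flat; Cohen-Macaulayness is preserved upon quotienting a CM local ring by a non-zerodivisor. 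The relative dimension drops by one on the closed fiber, so by $R$-flatness $X_k$ has relative dimension $n - k$ throughout.

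The main obstacle will be the non-containment in \eqref{proposition:div:order}: the non-zerodivisor property from Proposition \ref{proposition:divOnClFib} alone does not a priori preclude $f_k \in \frakp^2 \bar S + I_{X_{s,k-1}}^{\mathrm{sat}}$, so the strengthened choice of $f_k$ sketched above is essential. A secondary technical point is the compatibility of saturation with reduction modulo $\frakm$; this holds in degrees past the Castelnuovo-Mumford regularity of $I_{X_{k-1}}$, which will be ensured by choosing the $f_k$'s to have sufficiently large degree in Proposition \ref{proposition:divOnClFib}.
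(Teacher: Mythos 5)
Your proposal is correct and follows essentially the same route as the paper: part \eqref{proposition:div:nzd} is exactly the paper's appeal to the local criterion of flatness for regular sequences (\cite[Corollary to Theorem~22.5]{MatsCRT89}), just applied stalkwise one element at a time, and part \eqref{proposition:div:order} is the same reduction modulo $\frakm$ to the closed fibre. Of your two flagged obstacles, the first is well spotted — the paper's one-line deduction of \eqref{proposition:div:order} from Proposition~\ref{proposition:divOnClFib} really does need the $f_k$ chosen outside $(x_1,\dots,x_N)^2+I_{(X_{k-1})_s}$ rather than merely outside $(x_1,\dots,x_N)^2$, and your Nakayama/prime-avoidance strengthening is the right (implicit) fix — while the second is unnecessary: the argument only needs the reduction of $I_{X_{k-1}}$ modulo $\frakm$ to be \emph{contained in} $I_{(X_{k-1})_s}$, which holds automatically, so no control of Castelnuovo--Mumford regularity or of saturation under base change is required.
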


\begin{proof}

\eqref{proposition:div:order}: 
It is immediate from the definition that $\tilde f_k \in (x_1, \ldots,
x_N)$. The other part follows from
Proposition~\ref{proposition:divOnClFib}
since the reduction of the ideal
$\left((x_1, \ldots, x_N)^2 + I_{X_{k-1}}\right)$ modulo $\frakm_A$
lies inside
$\left((x_1, \ldots, x_N)^2 + I_{(X_{k-1})_{s}}\right)$.

\eqref{proposition:div:nzd}:
$X$ is Cohen-Macaulay and flat over $R$. 
Hence $X_{s}$ is 
Cohen-Macaulay~\cite[Corollary to Theorem~23.3]{MatsCRT89}.
By Proposition~\ref{proposition:divOnClFib}, $f_1, \ldots, f_k$ is a
regular sequence on $X_{s}$.
Hence by~\cite[Corollary to Theorem~22.5]{MatsCRT89}, 
$\tilde f_1, \ldots \tilde f_k$ is a regular sequence on $X$ and $X_k$ is 
flat over $R$.  Therefore $X_k$ is additionally Cohen-Macaulay and of
relative dimension $n-k$.
\end{proof}

\begin{proposition}
\label{proposition:Zcomp}
Suppose that $(R, \frakm, \Bbbk)$ is a regular local ring.
Then $Z$ is a connected component of $X_n$.
\end{proposition}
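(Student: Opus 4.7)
My plan is to reduce, via the finite flatness of $X_n\to\Spec R$, to a linear-algebra condition at the closed point $z$, and then to secure that condition by a refined choice of the $f_k$'s. Since $X_n\to\Spec R$ is finite and flat by Proposition~\ref{proposition:div} and $R$ is local, $X_n=\Spec A$ with $A$ a finite free $R$-algebra that decomposes as a product of local finite free $R$-algebras, one for each closed point. The section $Z$ factors through exactly one such factor, $\strSh_{X_n,z}$, yielding a surjection $\strSh_{X_n,z}\twoheadrightarrow R$. Since $\strSh_{X_n,z}$ is $R$-free of rank $\dim_{\Bbbk}\strSh_{(X_n)_s,z}$, this surjection is an isomorphism---which is precisely what ``$Z$ is a connected component of $X_n$'' says---iff $\strSh_{(X_n)_s,z}=\Bbbk$, i.e., iff $z$ is a reduced point of the closed fibre.

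Next, I will use that $\strSh_{X_s,z}$ is a regular local ring of dimension $n$ (by smoothness of $X/R$ and $\Bbbk$-rationality of $z$), with $\strSh_{(X_n)_s,z}=\strSh_{X_s,z}/(f_1,\dots,f_n)$. Dehomogenising in the chart $\{x_0\ne 0\}$ shows $f_k\equiv l_k\pmod{\frakm_{X_s,z}^{2}}$, so by Nakayama $(f_1,\dots,f_n)=\frakm_{X_s,z}$ iff the images $\bar l_1,\dots,\bar l_n$ form a basis of the $n$-dimensional cotangent space $\frakm_{X_s,z}/\frakm_{X_s,z}^{2}$. Thus the whole problem reduces to ensuring this linear independence of the linear parts.

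To achieve this, I will refine the inductive construction of Proposition~\ref{proposition:divOnClFib}: having chosen $f_1,\dots,f_{k-1}$ with $\bar l_1,\dots,\bar l_{k-1}$ linearly independent in the cotangent space, I will pick a linear form $l_k\in \bar{S}_1\cap\frakp$ with $\bar l_k$ independent of the previous ones (possible since $k-1<n$), and then choose $f_k$ in the coset $x_0^{d_k-1}l_k+\frakp^{2}$ that is a non-zero-divisor modulo $(I_{X_s}+(f_1,\dots,f_{k-1}))^{\mathrm{sat}}$. The hard part is this last step, a shifted prime avoidance: one must prescribe the linear part of $f_k$ and simultaneously achieve the non-zero-divisor condition. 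It is feasible because every associated prime of the saturation has height strictly less than $N=\height \frakp$ and so contains neither $\frakp$ nor the entire coset $x_0^{d_k-1}l_k+\frakp^{2}$; a standard coset-avoidance argument then produces the desired $f_k$. With this refined choice, $\bar l_1,\dots,\bar l_n$ is a basis of the cotangent space, so $(f_1,\dots,f_n)=\frakm_{X_s,z}$ and $Z$ is a connected component of $X_n$.
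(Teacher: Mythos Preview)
Your reduction to the condition $\strSh_{(X_n)_s,z}=\Bbbk$ is correct, and you are right that this needs more than $f_k\in\frakp\setminus\frakp^2$: one must control the images $\bar l_k$ in the cotangent space of $X_s$ at $z$. The paper's Proposition~\ref{proposition:div}\eqref{proposition:div:order} in fact records the sharper condition $f_k\notin\frakp^2+I_{(X_{k-1})_s}$, which inductively keeps $(X_k)_s$ regular at $z$ and hence yields $\strSh_{(X_n)_s,z}=\Bbbk$; your linear-independence requirement on the $\bar l_k$ amounts to the same thing. Your coset-avoidance refinement works, though a simpler route is ordinary homogeneous prime avoidance with the single non-prime ideal $\frakp^2+I_{(X_{k-1})_s}$: since $\dim\strSh_{(X_{k-1})_s,z}=n-k+1>0$, Nakayama gives $\frakp\not\subseteq\frakp^2+I_{(X_{k-1})_s}$, and no associated prime of the saturation contains $\frakp$ either.

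There is, however, a genuine gap in your passage from $\strSh_{(X_n)_s,z}=\Bbbk$ to ``$Z$ is a connected component''. You assert that the finite free $R$-algebra $A$ with $X_n=\Spec A$ decomposes as a product of local finite free $R$-algebras, one per closed point, so that $\strSh_{X_n,z}$ is $R$-free of rank $\dim_\Bbbk\strSh_{(X_n)_s,z}$. This requires $R$ Henselian and fails for a general regular local ring: take $R=\ints_{(5)}$ and $A=R[x]/(x^2+1)$; the closed fibre has two reduced points but $A$ is a domain, and the localization $A_{\frakm_z}$ contains $(2+i)^{-1}$, hence is not even finitely generated over $R$. The paper circumvents this by working with the stalk $(A,\frakn):=\strSh_{X_n,z}$ and using multiplicity theory: $A$ is flat over $R$, so a regular system of parameters for $R$ remains a regular sequence in $A$ and $A$ is Cohen--Macaulay; then $A/\frakm A=\Bbbk$ gives $e(\frakm A,A)=\length_A(A/\frakm A)=1$, hence $e(\frakn,A)=1$, hence $A$ is regular and in particular a domain. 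Thus $Z$ is the unique irreducible component of $X_n$ through $z$, and since $z$ is the only closed point of $Z\simeq\Spec R$, no other component meets $Z$. You can also repair your own argument without multiplicity: the section $A\twoheadrightarrow R$ does localize to a surjection $\strSh_{X_n,z}\twoheadrightarrow R$ (elements outside $\frakm_z$ map to units in $R$), and its kernel $K$ satisfies $K=\frakm K\subseteq\frakn K$, so $K=0$ by Nakayama in the Noetherian local ring $\strSh_{X_n,z}$.
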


\begin{proof}
$X_n$ is finite and flat over $R$ (Proposition~\ref{proposition:div}).
In order to prove the assertion, it suffices to show that $Z$ is the only
irreducible component containing the closed point of $Z$.
Let $(A, \frakn)$ be the local ring of $\strSh_{X_n}$ at 
the closed point of $Z$.

Since $X_n$ is flat over $R$,  $A$ is flat over $R$, so every $R$-regular
sequence in $\frakm$ is also $A$-regular.
In particular $A$ is Cohen-Macaulay.
Since $Z_{s}$ is an isolated point of 
$(X_n)_{s}$, it follows that $A/\frakm A = \Bbbk$.
Hence the multiplicity $e(\frakm A, A )$ of the ideal $\frakm A$ in $A$ is
$1$, by~\cite[Theorem~17.11]{MatsCRT89}.
Hence $e(\frakn, A) = 1$, which implies that $A$ is a regular local ring
and, \textit{a fortiori}, a domain
which is what we needed to show.
\end{proof}

By now, we have proved~\eqref{theorem:dvrDetails:existsfk},
\eqref{theorem:dvrDetails:flatZerodim} and
\eqref{theorem:dvrDetails:Zcomp} of Theorem~\ref{theorem:dvrDetails}.

\begin{proof}[Conclusion of the proof of
Theorem~\protect{\ref{theorem:dvrDetails}}]
The first assertion of
\eqref{theorem:dvrDetails:extgen} follows from
Proposition~\ref{proposition:toprowPID}.

To complete the proof of Theorem~\ref{theorem:dvrDetails}, we need to show
that the base-change of the top row in the
diagram~\eqref{equation:pullbackOfExt} gives a generator of 
$\homology^n(X_S, \omega_{X_S})$ for each ring map $R \to S$. 
(Here $X_S := X \times_{\Spec R} \Spec S$.) 
We need to show two things:
\begin{asparaenum}
\item
$\left(0 \to  \omega_X \to \cdots \to  
\widetilde{K}_2 \to 
\calG_1 \to 
\strSh_{X} \to  0\right) \otimes_R S$ is exact.
For this, we need to show that the sheaves are flat over $R$.
The terms other than $\calG_1$ are locally free over $X$, so it remains to
show that $\calG_1$ is torsion-free over $R$. 
First note that $\calG_0 = I_{Z'} \otimes \tilde K_1$ is torsion-free.
Let $g$ be a (local) section of $\calG_1$ that is torsion over $R$.
The images of $g$ in $\strSh_X$ and in $\calG_0$ are zero since both these
sheaves are torsion-free over $R$. Since $\calG_1$ is the pull-back, it
follows that $g =0$.

\item
$\left(0 \to  \omega_X \to \cdots \to  
\widetilde{K}_2 \to 
\calG_1 \to 
\strSh_{X} \to  0\right) \otimes_R S$ is non-zero in 
$\Ext_{X_S}^n(\strSh_{X_S}, \omega_{X_S})$.
Suppose it is zero for some $S$. Then it would be zero for some field $F$
with a ring map $S \to F$, so we may assume that $S$ is a field.
In this case, we immediately reduce (since maps between fields are
faithfully flat) to the case that $S$ is either the residue field of $R$ or
the fraction field of $R$.
Since the fraction field of $R$ is flat over $R$ and 
$\Ext_{X}^n(\strSh_{X}, \omega_{X}) \simeq R$, the assertion follows in
this case.
Hence assume that $S = \Bbbk$.

We first argue that if we apply 
$- \otimes_R \Bbbk$, to the 
diagram~\eqref{equation:pullbackOfExt} for $R$, we get
diagram~\eqref{equation:pullbackOfExt} for $\Bbbk$.
Indeed, since the rows consist of sheaves that are torsion-free over $R$
($Z = \Spec R$ and $X_n$ is flat over $R$),
the rows remain exact after applying $- \otimes_R \Bbbk$.
It is also immediate that $\strSh_{X}, \strSh_{Z}, \strSh_{X_n}$ are
replaced by $\strSh_{X_s}, \strSh_{Z_s},
\strSh_{(X_n)_s}$, respectively.
Hence it remains to show that $\omega_X \otimes_R \Bbbk = 
\omega_{X_s}$.
This is indeed true,
since $X_s$ is the pull-back of the divisor $\Spec \Bbbk
\subseteq \Spec R$, and, therefore, 
the normal bundle of $X_s$ inside $X$ is isomorphic to 
$\strSh_{X_s}$.
Therefore the base-change of the 
diagram~\eqref{equation:pullbackOfExt} for $R$ along the map $R \to \Bbbk$
gives the diagram~\eqref{equation:pullbackOfExt} for $\Bbbk$.
By Theorem~\ref{theorem:fld}\eqref{theorem:fld:extgen}, the top row of 
the diagram~\eqref{equation:pullbackOfExt} for $\Bbbk$ generates 
$\Ext_{X_s}^n(\strSh_{X_s},
\omega_{X_s})$ as an $\Bbbk$-vector space.
\end{asparaenum}

This concludes the proof of
Theorem~\protect{\ref{theorem:dvrDetails}}.
\end{proof}

\section{Grassmannian}
\label{section:grass}

Let $m \geq 4$ and 
$X$ the Grassmannian $G_{2,m}$ over $\ints$, i.e., the scheme whose
$R$-points (for a ring $R$) classify the locally free (over $R$) quotient
modules of rank $m-2$ of $R^{\oplus m}$. It is a smooth $\ints$-scheme of
relative dimension $n := 2(m-2)$.
We show that the morphism $X \to \Spec \ints$ has a section $Z$ and that
there exists a complete intersection $X_n$ inside $X$ of relative dimension
$0$ over $\ints$ satisfying the conditions of Notation~\ref{notation:main}.
Since $\ints$ is a PID, the strategy of Section~\ref{section:strategy} can
be carried out to give a generator of 
$\Ext_X^n(\strSh_X, \omega_X)$ as a free $\ints$-module.

Let $N = \binom m2 -1$.
Then $X$ can be embedded inside $\projective := 
\projective_\ints^N$ as a closed
subscheme, defined by the Pluecker relations;
see~\cite[Chapter~III, \S 2.7]{EisenbudHarrisSchemes2000}.
Let $p_{i,j}, 1 \leq i < j \leq m$ be homogeneous coordinates for 
$\projective$.
Let $S = \ints[p_{i,j}, 1 \leq i < j \leq m]$.
The main result of this section is:

\begin{theorem}
\label{theorem:grassDetails}
For $3 \leq k \leq 2m-1$, write $l_k = \sum_{i+j=k} p_{i,j}$.
Write $V \subseteq \projective$ for the linear subvariety defined by $l_3,
l_4, \ldots, \widehat{l_{m+1}}, \ldots, l_{2m-1}$.
Then $X \cap V$ has zero-dimensional fibres and there is a section 
$\Spec \ints \to X \cap V$ which defines an irreducible and 
reduced component of $X \cap V$.
\end{theorem}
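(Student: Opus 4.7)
The plan is to construct the section explicitly, to verify via a Jacobian calculation in a standard affine chart of the Grassmannian that this section is a reduced isolated component of $X \cap V$, and to establish the zero-dimensionality of the fibres using a $\mathbb{G}_m$-action under which each $l_k$ is a weight vector.

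Take $Z$ to be the $\ints$-point of $X$ corresponding to $\langle e_1, e_m\rangle \subseteq \ints^m$, whose Pl\"ucker coordinates are $p_{1,m} = 1$ and $p_{i,j} = 0$ for $(i,j) \neq (1,m)$; since $l_k(Z) = \delta_{k, m+1}$, the point $Z$ lies in $X \cap V$. To see that it is a reduced irreducible component, pass to the affine open $U = \{p_{1, m} \neq 0\} \simeq \affine^n_\ints$ by parameterising a plane as the row span of the matrix whose rows are $(1, a_2, \ldots, a_{m-1}, 0)$ and $(0, b_2, \ldots, b_{m-1}, 1)$. In these coordinates $p_{1, j} = b_j$, $p_{i, m} = a_i$, $p_{1,m} = 1$, and $p_{i,j} = a_i b_j - a_j b_i$ for $2 \leq i < j \leq m-1$; the section $Z$ is the origin. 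The linear part of $l_k$ at the origin equals $b_{k-1}$ for $3 \leq k \leq m$ and $a_{k-m}$ for $m+2 \leq k \leq 2m-1$ (the remaining contributions to $l_k$ come from the strictly quadratic $p_{i,j}$'s). These linear parts exhaust the $2(m-2) = n$ coordinates exactly once, so the Jacobian of $(l_3, \ldots, \widehat{l_{m+1}}, \ldots, l_{2m-1})$ at the origin is a permutation matrix, and the $l_k$'s generate the maximal ideal of $Z$ in $\strSh_{U, Z}$. Hence $Z$ is a connected component of $X \cap V$, isomorphic as a scheme to $\Spec \ints$.

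For the zero-dimensional-fibres statement, introduce the torus $T = \mathbb{G}_{m, \ints}$ acting on $\ints^m$ by $t \cdot e_i = t^i e_i$. Then $p_{i,j}$ is a $T$-eigenvector of weight $i+j$, each $l_k$ has weight $k$, and both $V$ and $X \cap V$ are $T$-invariant. Fix a residue field $\kappa$ of $\ints$ and a geometric point $[v] \in (X \cap V)(\overline{\kappa})$ with $v = \sum c_{ij}\, e_i \wedge e_j$. Since $X$ is proper, the limits $\lim_{t \to 0}[t \cdot v]$ and $\lim_{t \to \infty}[t \cdot v]$ lie in $(X \cap V)(\overline{\kappa})$ and are $T$-fixed; in Pl\"ucker coordinates they are the projectivisations of the weight components of $v$ of smallest and largest weight. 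Every $T$-fixed point of $X$ is a coordinate plane $[e_i \wedge e_j]$ (the only $T$-invariant $2$-planes in $\overline{\kappa}^m$), and such a point belongs to $V$ iff $i+j = m+1$; hence both extremal weights of $v$ equal $m+1$, so $v$ lies in the eigenspace $W := \bigoplus_{i+j = m+1} \overline{\kappa} \cdot e_i \wedge e_j$. Writing $v = \sum_{1 \leq i \leq \lfloor m/2 \rfloor} c_i\, e_i \wedge e_{m+1-i}$, the Pl\"ucker relation $v \wedge v = 0$ becomes $2 \sum_{i<j} c_i c_j\, e_i \wedge e_{m+1-i} \wedge e_j \wedge e_{m+1-j} = 0$, and the $4$-wedges involved are linearly independent in $\bigwedge^4 \overline{\kappa}^m$ because distinct antipodal pairs give distinct $4$-element subsets of $\{1, \ldots, m\}$. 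Therefore at most one $c_i$ is non-zero, and $[v]$ is one of the $\lfloor m/2 \rfloor$ points $[e_i \wedge e_{m+1-i}]$. Thus $(X \cap V)_\kappa$ has finitely many geometric points, so it is zero-dimensional.

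The main obstacle, as I see it, is the reduction to the eigenspace $W$: the torus-limit argument pins down only the extremal weights of a would-be point, and one must separately invoke the decomposability relation $v \wedge v = 0$ to rule out positive-dimensional components supported within $W$. The remaining verifications---membership $Z \subseteq X \cap V$ and the Jacobian computation---are then routine.
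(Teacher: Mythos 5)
Your treatment of the section is essentially the paper's argument: both come down to a Jacobian computation at the point $[e_1\wedge e_m]$ showing that the $2(m-2)$ forms $l_k$, $k\neq m+1$, have linear parts which are, up to order and sign, exactly the affine coordinates of the big cell $\{p_{1,m}\neq 0\}$, so that they cut out $Z$ with multiplicity one near $Z$. (Your version, carried out intrinsically on the cell $\affine^{2(m-2)}_\ints$ with the $a_i,b_j$ coordinates, is if anything cleaner than the paper's, which works with the dehomogenized Pl\"ucker ideal and the vanishing of $\Omega_{(S/\fraka)/\ints}$.) For the zero-dimensionality of the fibres you take a genuinely different route. The paper base-changes to $\overline{\kappa}$ and shows, by downward induction using standard monomial theory, that the $l_k$ with $k\geq m+2$ cut out the union of the Schubert varieties of the expected dimension and the $l_k$ with $k\leq m$ the opposite Schubert varieties, so that $X_{\overline{\kappa}}\cap V_{\overline{\kappa}}$ is a union of zero-dimensional Richardson varieties. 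Your $\mathbb{G}_m$-limit argument replaces all of this: the two extremal weight components of any $[v]\in (X\cap V)(\overline{\kappa})$ are $T$-fixed points of the proper scheme $X\cap V$, hence coordinate points $[e_i\wedge e_j]$ with $i+j=m+1$, forcing $v$ into the single weight space $W$. This is more elementary, avoids Schubert calculus entirely, and identifies the same finite set of points.

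There is, however, one step that fails as written: the passage from $v\in W$ to ``at most one $c_i$ is nonzero'' via $v\wedge v=0$. As you yourself record, $v\wedge v=2\sum_{i<j}c_ic_j\,e_i\wedge e_{m+1-i}\wedge e_j\wedge e_{m+1-j}$, so over $\overline{\mathbb{F}}_2$ the relation $v\wedge v=0$ is vacuous and gives no information about the $c_i$; since the theorem concerns all fibres over $\Spec\ints$, the fibre over $(2)$ is exactly the one left uncovered. The condition defining $X$ over $\ints$ (and hence the decomposability condition in every characteristic) is the system of integral Pl\"ucker quadrics $p_{ab}p_{cd}-p_{ac}p_{bd}+p_{ad}p_{bc}$ for $a<b<c<d$, not $v\wedge v=0$, which is twice these. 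Applying the quadric for the indices $i<j<m+1-j<m+1-i$ to a $v$ supported on antipodal pairs, the monomials $p_{ab}p_{cd}$ and $p_{ac}p_{bd}$ vanish and $p_{ad}p_{bc}=c_ic_j$, so one gets $c_ic_j=0$ on the nose, with no factor of $2$. With that one-line substitution your proof is complete.
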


To prove that $X \cap V$ has zero-dimensional fibres, we may do a
base-change along $\ints \to \Bbbk$ where $\Bbbk$ is an algebraically
closed field, and show that $X_\Bbbk \cap V_\Bbbk$ 
(the respective base-changes)
is a zero-dimensional scheme. We do this now.
We use~\cite[Chapter~1]{SeshadriSMT07} as the reference on Schubert
varieties and standard monomial theory.
We denote the Schubert varieties in 
$X_\Bbbk$ by $Y_{i,j}$ $1 \leq i < j \leq n$.
(This is the Schubert variety in $X_\Bbbk$ corresponding to a permutation
$\sigma \in S_n$ with $\{\sigma(1), \sigma(2)\} = \{1,2 \}$.)

\begin{proposition}
\label{proposition:G2nFld}
Let $\Bbbk$ be an algebraically closed field. 
Then $X_\Bbbk \cap V_\Bbbk$ is a zero-dimensional scheme.
\end{proposition}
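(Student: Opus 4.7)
The plan is to exploit a natural $\mathbb{G}_m$-action on $X_\Bbbk$ under which $V_\Bbbk$ is invariant, and thereby reduce the problem to identifying the $\mathbb{G}_m$-fixed locus. Let $\mathbb{G}_m$ act on $\Bbbk^m$ by $t \cdot e_i = t^i e_i$; this induces actions on $X_\Bbbk = G_{2,m}$ and on $\bigwedge^2 \Bbbk^m$, giving weight $i+j$ to the Pl\"ucker coordinate $p_{i,j}$, so each $l_k$ is an eigenvector of weight $k$ and $V_\Bbbk$ as well as $X_\Bbbk \cap V_\Bbbk$ is $\mathbb{G}_m$-stable. Since the weights $1, 2, \ldots, m$ are pairwise distinct, the $\mathbb{G}_m$-fixed $2$-planes in $\Bbbk^m$ are exactly the coordinate planes $F_{a,b}$ spanned by $e_a$ and $e_b$ for $1 \leq a < b \leq m$. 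At $F_{a,b}$ only $p_{a,b}$ is nonzero, so $l_k(F_{a,b}) \neq 0$ iff $k = a + b$, and hence $F_{a,b} \in V_\Bbbk$ iff $a + b = m + 1$.

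Given any closed point $\omega \in X_\Bbbk \cap V_\Bbbk$, properness and $\mathbb{G}_m$-invariance of $X_\Bbbk \cap V_\Bbbk$ yield limits $\lim_{t \to 0} t \cdot \omega$ and $\lim_{t \to \infty} t \cdot \omega$ inside $X_\Bbbk \cap V_\Bbbk$, both $\mathbb{G}_m$-fixed. Writing $\omega = \sum_k \omega_k$ in the weight decomposition $\bigwedge^2 \Bbbk^m = \bigoplus_k W_k$ with $W_k = \bigoplus_{i + j = k,\, i < j} \Bbbk(e_i \wedge e_j)$, the two limits are (projectively) $\omega_{K_{\min}}$ and $\omega_{K_{\max}}$, where $K_{\min}$ and $K_{\max}$ are the smallest and largest $k$ with $\omega_k \neq 0$. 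By the previous step both must equal $m + 1$, so $\omega \in W_{m+1}$.

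It remains to verify that every decomposable $\omega \in W_{m+1}$ is a scalar multiple of some $e_i \wedge e_j$. The key observation is that distinct pairs $(i, j) \neq (i', j')$ with $i + j = i' + j' = m + 1$ and $i < j,\ i' < j'$ have disjoint underlying sets $\{i, j\} \cap \{i', j'\} = \emptyset$. Hence, expanding $\omega = \sum_{i + j = m + 1} c_{i,j}\, e_i \wedge e_j$, the tensor $\omega \wedge \omega$ is a linear combination of pairwise distinct nonzero basis elements of $\bigwedge^4 \Bbbk^m$ with coefficients $2 c_{i,j} c_{i', j'}$; the Pl\"ucker decomposability condition $\omega \wedge \omega = 0$ therefore forces at most one $c_{i, j}$ to be nonzero, so $\omega$ represents some $F_{i, j}$ with $i + j = m + 1$. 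Combining the steps, $X_\Bbbk \cap V_\Bbbk$ equals set-theoretically the finite set $\{F_{a, b} : 1 \leq a < b \leq m,\ a + b = m + 1\}$ and so is zero-dimensional.

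The main conceptual step is spotting the $\mathbb{G}_m$-action and the weight alignment between the forms $l_k$ and the Pl\"ucker coordinates $p_{i, j}$ with $i + j = k$; once this is in place, the remainder is an elementary computation using the quadratic equation $\omega \wedge \omega = 0$ cutting out $G_{2, m}$ inside $\projective(\bigwedge^2 \Bbbk^m)$, and no Schubert combinatorics is actually needed to get zero-dimensionality.
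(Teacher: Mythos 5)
Your argument takes a genuinely different route from the paper's. The paper works inductively with the hyperplane sections $l_k$, showing that each successive truncation $X_\Bbbk \cap W_k$ is set-theoretically a union of Schubert varieties of the expected dimension (and dually for the $W'_k$), so that $X_\Bbbk\cap V_\Bbbk$ is a union of zero-dimensional Richardson varieties; this rests on the description of the ideals of Schubert varieties by Pl\"ucker coordinates. You replace all of that with the weight decomposition for the torus action $t\cdot e_i=t^ie_i$, the limits $\lim_{t\to 0}$ and $\lim_{t\to\infty}$ of an orbit, and a direct computation with the quadrics cutting out $G_{2,m}$. This is more elementary, avoids standard monomial theory entirely, and moreover identifies the points of $X_\Bbbk\cap V_\Bbbk$ explicitly as the coordinate planes $F_{a,b}$ with $a+b=m+1$ (consistent with Proposition~\ref{proposition:TwoPts} in the case $m=4$). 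The structural steps --- stability of $V_\Bbbk$ under the torus, identification of the fixed locus, existence and description of the limits, and the reduction to $\omega\in W_{m+1}$ --- are all correct.

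There is, however, one genuine gap in the final step: the criterion ``$\omega$ decomposable $\iff\omega\wedge\omega=0$'' fails in characteristic $2$, where $\omega\wedge\omega=0$ holds identically for \emph{every} $2$-form (your own formula exhibits the coefficients $2c_{i,j}c_{i',j'}$, which vanish for free). Since the proposition must be applied to every closed fibre of $X\to\Spec\ints$, characteristic $2$ cannot be excluded. The repair is to use the three-term Pl\"ucker relations, which define $G_{2,m}$ over $\ints$ in all characteristics: if $(i,j)\neq(k,l)$ are two pairs with $i+j=k+l=m+1$, they are disjoint, and assuming $i<k$ the four indices are ordered $i<k<l<j$, so the relation $p_{i,k}\,p_{l,j}-p_{i,l}\,p_{k,j}+p_{i,j}\,p_{k,l}=0$ applies. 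On $W_{m+1}$ the coordinates $p_{i,k}$ and $p_{i,l}$ vanish because $i+k<m+1$ and $i+l<m+1$, whence $p_{i,j}\,p_{k,l}=0$. This forces at most one nonzero coordinate in every characteristic, and the rest of your proof then goes through unchanged.
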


\begin{proof}
We think of $X_\Bbbk$ as the quotient of the space of rank-two $m \times 2$
matrices under the natural action of $\GL_2(\Bbbk)$ on the right.
The homogeneous coordinate $p_{i,j}$ of $\projective_\Bbbk$ becomes, on
$X_\Bbbk$, the determinant of the $2 \times 2$ minor consisting of the
rows $i$ and $j$ and both the columns.

Note that $V_\Bbbk$ is defined inside $\projective_\Bbbk$ by (the images in
$S \otimes_\ints \Bbbk$) of the $l_k$.
For each $m+2 \leq k \leq 2m-1$, let $W_k$ be the linear 
subvariety defined by $l_k, \ldots, l_{2m-1}$.
We claim that 
$X_\Bbbk \cap W_k$ is set-theoretically the union of all the 
Schubert subvarieties of $X_\Bbbk$ of dimension $k-4$.
Note that $k-4 = \dim X_\Bbbk  - \codim_{\projective_\Bbbk} W_k$.
Therefore the irreducible components of $X_\Bbbk \cap W_{m+2}$ are
precisely the Schubert varieties $Y_{k,m+1-k}$ for all $1 \leq k < 
\frac m 2$.

For each $3 \leq k \leq m$, let $W'_k$ be the 
linear subvariety defined by $l_3, \ldots,l_{k}$.
By symmetry, and assuming the above claim, 
$X_\Bbbk \cap W'_k$ is set-theoretically the union of all the 
opposite Schubert subvarieties of $X_\Bbbk$ of codimension $k-2$.
In particular, the irreducible components of $X_\Bbbk \cap W'_{m}$ are
precisely the opposite Schubert varieties $Y^{k,m+1-k}$ for all $1 \leq k < 
\frac m 2$.
Hence the irreducible components of 
$X_\Bbbk \cap V_\Bbbk = (X_\Bbbk \cap W_{m+2}) \cap (X_\Bbbk \cap W'_{m})$ 
are the Richardson varieties $Y^{k,m+1-k}_{k,m+1-k}$ for all $1 \leq k <
\frac m 2$, all of which are zero-dimensional. (See,
e.g.,~\cite[\S~1.3]{BrionGeomFlagVars2005}.)

To prove the claim, we proceed by downward induction on $k$. When $k=2m-1$,
$X_\Bbbk \cap W_k$ is the Schubert divisor $Y_{m-2,m}$, which is of
dimension $2m-5$. Assume that the claim has been proved up to $k+1$.
Let $Y$ be a Schubert variety of dimension $k-3$. 
Hence $Y$ is an irreducible component of $X_\Bbbk \cap W_{k+1}$.
Say $Y = Y_{i,k-i}$. Then, in the terms of $l_k$, all but $l_{i,k-i}$
vanish along $Y$, since the ideal of $Y_{i,k-i}$ in $X_\Bbbk$ is generated
by $\{p_{j_1,j_2} \mid (j_1, j_2 ) \not \leq (i, k-i)\}$. Moreover, 
$Y_{i,k-i} \cap \{p_{i,k-i } = 0\}$ is the union of the codimension-$1$
Schubert subvarieties of $Y_{i,k-i}$. Further, every $(k-4)$-dimensional
Schubert variety is a subvariety of a $(k-3)$-dimensional Schubert variety.
Hence 
$X_\Bbbk \cap W_k = X_\Bbbk \cap W_{k+1} \cap \{l_k = 0 \}$
is set-theoretically the union of all the 
$(k-4)$-dimensional Schubert varieties.
\end{proof}

\begin{proof}[Proof of Theorem~\protect{\ref{theorem:grassDetails}}]
As we remarked above,
Proposition~\ref{proposition:G2nFld}
implies that $X \cap V$ has zero-dimensional fibres.
We now prove the existence of a section of the structure morphism $X \cap
V \to \Spec \ints$ with the desired properties.
Let $\affine \subseteq \projective$ be the affine space obtained by
inverting $p_{1,n}$. More precisely, 
write $q_{i,j} = \frac{p_{i,j } }{p_{1,n } }$,
$R = \ints[q_{i,j} : 1 \leq i<j\leq n, (i,j) \neq (1,n)]$
and $\frakm$ for the prime ideal of $R$ generated by the $q_{i,j}$.
Then $\affine = \Spec R$.
Note that $X$ is defined by the 
the Pluecker relations, which are of the form
\[
p_{i,j }p_{i',j' } \pm p_{i,i'} p_{j',j} \pm p_{i,j' }p_{i',j}
\;\text{where}\; i<i', j>j'.
\]
See, e.g.,~\cite[Chapter~III, \S 2.7]{EisenbudHarrisSchemes2000}.
(For the sake of convenience, we do not specify
which terms on the right appear with positive sign and which with negative
sign.)
A Pluecker relation of the form
\[
p_{1,n }p_{i',j' } \pm p_{1,k} p_{j',n} \pm p_{1,j' }p_{k,n}, \;\text{with}\;  
1 < i' < j' < n
\]
gives a generator 
\[
q_{i',j' } \pm q_{1,k} q_{j',n} \pm q_{1,j' }q_{k,n}
\]
of the ideal of $X \cap \affine$ inside $\affine$.
The other generators (i.e., with $i<i', j>j', (i,j) \neq (1,n)$) give
the polynomials 
$q_{i,j }q_{i',j' } \pm q_{i,k} q_{j',j} \pm q_{i,j' }q_{k,j}$
inside the ideal of $X \cap \affine$.
For $3 \leq k \leq 2m-1, k \neq n+1$, the polynomial $l_k$ gives the
polynomial $q_{1,k-1} + q_{2,k-2} + \cdots \in R$.

We need to show that $\frakm$ is a minimal prime over 
the $R$-ideal $\fraka$ generated by
\begin{enumerate}

\item $q_{i',j' } \pm q_{1,i'} q_{j',n} \pm q_{1,j' }q_{i',n}$, 
$1 < i' < j' < n$;

\item
$q_{i,j }q_{i',j' } \pm q_{i,i'} q_{j',j} \pm q_{i,j' }q_{i',j}$,
$i<i', j>j', (i,j) \neq (1,n)$;

\item
\begin{enumerate}

\item
$q_{1,k-1} + q_{2,k-2} + \cdots$, 
$3 \leq k \leq n$;

\item
$\cdots + q_{k-n+1,n-1} + q_{k-n,n}$, $n+2 \leq k \leq 2m-1$ 

\end{enumerate}
\end{enumerate}
and that $\fraka R_\frakm = \frakm R_\frakm$.
Note that $\fraka \subseteq \frakm$.
Therefore if we show that $\Omega_{(S/\fraka)/\ints}$ vanishes at $\frakm$,
it would follow that $S/\fraka$ is unramified over $\ints$ at $\frakm$,
whence it would follow that $\fraka R_\frakm = \frakm R_\frakm$.
Note that $\Omega_{(S/\fraka)/\ints}$ is the cokernel of the jacobian matrix 
of the aforementioned generating set of $\fraka$ with respect to the
variables $q_{i',j'}$.
Further, for each $(i',j' )$, there exists a unique generator in
the above list in which $q_{i',j'}$ appears with coefficient $1$, so the
jacobian matrix has full rank in the residue field $\kappa(\frakm)$ at
$\frakm$.
Therefore $\Omega_{(S/\fraka)/\ints}$ vanishes at $\frakm$.
\end{proof}

\begin{proof}[Proof of Theorem~\protect{\ref{theorem:grass}}]
Theorem~\ref{theorem:grassDetails} implies that the conditions on 
Notation~\ref{notation:main} are met.
Hence we can apply Proposition~\ref{proposition:toprowPID} to complete the
proof of Theorem~\ref{theorem:grass}.
\end{proof}

\begin{remarkbox}
\label{remarkbox:Gdn}
One could ask whether the above approach will work for more general
Grassmannians. Let $\Bbbk$ be a field. Consider the Grassmannian $G_{d,n }$
of $d$-dimensional subspaces of $\Bbbk^n$.
Let $I_{d,m} = \{1 \leq i_1 < \cdots < i_d \leq m \}$.
Let $p_\bfi, \bfi \in I_{d,m}$ be Pluecker coordinates.
For integers $s$ with $\binom{d+1}2 \leq s \leq \binom{d+1}2 + d(m-d)$,
let
\[
l_s := \sum_{|\bfi|=s } p_\bfi
\]
Then $l_s, \binom{d+1}2 \leq s \leq \binom{d+1}2 + d(m-d)$ form a regular
sequence on the coordinate ring $R$ of $G_{d,m }$. There are $d(m-d)+1$
linear forms. To imitate the above proof, we need to show that a subset
consisting of $d(m-d)$ linear forms defines 
a zero-dimensional subscheme of $G_{d,m}$ that has as one component 
a reduced $\Bbbk$-rational point. Computations in~\cite{M2} for $G_{3,6}$ 
showed that for every choice of $9$ linear forms (note that $\dim
G_{3,6}=9$), none of the components of the corresponding
zero-dimensional subscheme is reduced.
\end{remarkbox}

\section{An example}
\label{section:EulerKoszul}

In the Introduction, we mentioned that, in general, for a smooth connected
projective scheme $X$ over $R$ with a closed embedding $i : X \to
\projective^N_R$, the Koszul complex constructed from the vector-bundle on
$X$ (and a section) induced by the pull-back of the Euler sequence on
$\projective^N_R$ need not give a generator of 
$\Ext_X^n(\strSh_X, \omega_{X/R})$. We now describe an example of this
behaviour.

Let $R = \ints$ and $X =G_{2,4}$.
Write $\projective^5 = \projective^5_\ints$.
Let $H_1, \ldots, H_4$ be the hyperplanes in 
$\projective^5$ defined by the linear polynomials $l_3, l_4, l_6, l_7$
(defined in the previous section) respectively.
Let $D_i = X \cap H_i$, $1 \leq i \leq 4$.

\begin{proposition}
\label{proposition:isoCohOmega}
$\homology^i(\projective^5, \Omega_{\projective^5}(m)) 
\simeq \homology^i(X, \Omega_{X}(m))$ for all $i$ and for $m=0,1$.
\end{proposition}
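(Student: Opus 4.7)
The plan is to exploit the fact that $X = G_{2,4}$ is the Klein quadric, i.e., a smooth degree-$2$ hypersurface in $\projective^5$ cut out by the single Pluecker relation $p_{1,2}p_{3,4} - p_{1,3}p_{2,4} + p_{1,4}p_{2,3}$. Hence the ideal sheaf of $X$ in $\projective^5$ is $\strSh_{\projective^5}(-2)$, and the conormal sheaf is $\strSh_X(-2)$.

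I would prove the isomorphism by routing through $\homology^i(X, i^*\Omega_{\projective^5}(m))$, where $i : X \hookrightarrow \projective^5$ is the embedding. First, tensoring the ideal sheaf sequence
\[
0 \to \strSh_{\projective^5}(-2) \to \strSh_{\projective^5} \to i_*\strSh_X \to 0
\]
with the locally free sheaf $\Omega_{\projective^5}(m)$ and taking cohomology gives
\[
\homology^i(\projective^5, \Omega_{\projective^5}(m)) \simeq \homology^i(X, i^*\Omega_{\projective^5}(m))
\]
provided $\homology^i(\projective^5, \Omega_{\projective^5}(m-2)) = 0$ for every $i$. Second, the conormal sequence
\[
0 \to \strSh_X(-2) \to i^*\Omega_{\projective^5} \to \Omega_X \to 0,
\]
twisted by $\strSh_X(m)$, gives
\[
\homology^i(X, i^*\Omega_{\projective^5}(m)) \simeq \homology^i(X, \Omega_X(m))
\]
provided $\homology^i(X, \strSh_X(m-2)) = 0$ for every $i$. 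Composing the two isomorphisms proves the proposition.

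Both required vanishings (for $m \in \{0, 1\}$, i.e., twists in $\{-2, -1\}$) are routine. For the cotangent twists on $\projective^5$, I would use the Euler sequence to reduce to the standard vanishing $\homology^i(\projective^5, \strSh_{\projective^5}(j)) = 0$ for every $i$ when $-5 \leq j \leq -1$. For the line bundle twists on $X$, I would tensor the ideal sheaf sequence again by $\strSh_{\projective^5}(m-2)$ and invoke the same vanishing on $\projective^5$ (the twists that appear are in $\{-4, -3, -2, -1\}$, all in the vanishing range).

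I do not expect a substantive obstacle: everything reduces to standard cohomology of line bundles on $\projective^5$. Working over $\ints$ instead of a field is harmless, since all the cohomology groups invoked vanish identically and are therefore $\ints$-flat automatically.
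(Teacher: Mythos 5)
Your proposal is correct and follows essentially the same route as the paper: both arguments combine the conormal sequence $0 \to \strSh_X(m-2) \to \Omega_{\projective^5}|_X(m) \to \Omega_X(m) \to 0$ with the restriction sequence $0 \to \Omega_{\projective^5}(m-2) \to \Omega_{\projective^5}(m) \to \Omega_{\projective^5}|_X(m) \to 0$, and reduce the two needed vanishings to standard cohomology of line bundles on $\projective^5$ via the Euler sequence and the degree-$2$ ideal sheaf sequence. The only difference is organizational (you phrase the first step as tensoring the ideal sheaf sequence by $\Omega_{\projective^5}(m)$, which is the same sequence the paper writes down), and your remark that everything works over $\ints$ because the relevant groups vanish identically is a harmless addition.
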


\begin{proof}
Let $I$ be the ideal defining $X$ inside $\projective^5$.
It is generated by a degree $2$ polynomial.
Hence $I/I^2 \simeq\strSh_X(-2)$.
Take the conormal sequence twisted by $\strSh_X(m)$:
\[
0 \to \strSh_X(m-2) \to \Omega_{\projective^5}|_{X}(m) \to 
\Omega_{X}(m) \to 0.
\]
We see that $\homology^*(X,\strSh_X(m-2)) = 0$ from the exact sequence
\[
0 \to \strSh_{\projective^5}(m-4)
\to \strSh_{\projective^5}(m-2)
\to \strSh_X(m-2) \to 0.
\]
Hence
$\homology^*(X,\Omega_{\projective^5}|_{X}(m))
=\homology^*(X,\Omega_{X}(m))$.

To calculate $\homology^*(X,\Omega_{\projective^5}|_{X}(m))$,
consider the exact sequence (obtained by applying 
$-\otimes \Omega_{\projective^5}(2)$ to the above sequence)
\[
0 \to \Omega_{\projective^5}(m-2)
\to \Omega_{\projective^5}(m)
\to \Omega_{\projective^5}|_{X}(m) \to 0.
\]
We want to show that 
$\homology^*(\projective^5,\Omega_{\projective^5}(m-2)) =
0$, which we get
from the Euler exact sequence (after a twist)
\[
0 \to \Omega_{\projective^5}(m-2) \to 
\strSh_{\projective^5}(m-3)^6 \to \strSh_{\projective^5}(m-2)
\to 0.
\]
\end{proof}

\begin{corollary}
$\homology^1(X, \Omega_{X}) = \ints$.
\end{corollary}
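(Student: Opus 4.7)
The plan is to combine Proposition~\ref{proposition:isoCohOmega} with a direct computation of $\homology^1(\projective^5, \Omega_{\projective^5})$ via the Euler sequence.

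By the $m=0$ case of Proposition~\ref{proposition:isoCohOmega}, we have $\homology^1(X, \Omega_X) \simeq \homology^1(\projective^5, \Omega_{\projective^5})$. So it suffices to compute the latter, which is a standard calculation over $\ints$. I would use the untwisted Euler sequence
\[
0 \to \Omega_{\projective^5} \to \strSh_{\projective^5}(-1)^{6} \to \strSh_{\projective^5} \to 0.
\]
Since $\homology^i(\projective^5, \strSh_{\projective^5}(-1)) = 0$ for all $i$ (by the standard cohomology of line bundles on projective space over $\ints$), the long exact sequence in cohomology degenerates into isomorphisms
\[
\homology^{i-1}(\projective^5, \strSh_{\projective^5}) \simeq \homology^{i}(\projective^5, \Omega_{\projective^5})
\]
for every $i$. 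Taking $i=1$ and noting $\homology^0(\projective^5, \strSh_{\projective^5}) = \ints$ yields $\homology^1(\projective^5, \Omega_{\projective^5}) = \ints$.

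Chaining this with the isomorphism from Proposition~\ref{proposition:isoCohOmega} gives $\homology^1(X, \Omega_X) = \ints$, as claimed. There is no real obstacle here since the hard work (reducing cohomology on $X$ to cohomology on ambient $\projective^5$) was already done in the preceding proposition; this corollary is essentially an instance of the well-known computation of $\homology^1$ of the cotangent bundle of projective space, performed over $\ints$ rather than a field.
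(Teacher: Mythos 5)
Your proof is correct and follows the same route as the paper: apply the $m=0$ case of Proposition~\ref{proposition:isoCohOmega} and then compute $\homology^1(\projective^5,\Omega_{\projective^5})=\ints$ from the Euler sequence using the vanishing of $\homology^*(\projective^5,\strSh_{\projective^5}(-1))$. The paper's proof is exactly this, stated in one line.
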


\begin{proof}
Follows from Proposition~\ref{proposition:isoCohOmega}
and the Euler exact sequence.
\end{proof}

\begin{proposition}
Let $1 \leq i \leq 4$.
The natural map $\Ext^1(\strSh_{D_i},\omega_X) \to
\Ext^1(\strSh_X,\omega_X) = \homology^1(X,\strSh_X)$
is an isomorphism. 
\end{proposition}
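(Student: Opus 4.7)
The plan is to exploit the hyperplane-section short exact sequence
\[
0 \to \strSh_X(-1) \to \strSh_X \to \strSh_{D_i} \to 0
\]
(which holds because $D_i = X \cap H_i$ and $H_i$ is a hyperplane in the ambient $\projective^5$) and to apply $\Hom_X(-,\Omega_X)$. In the relevant range the resulting Ext long exact sequence reads
\[
\homology^0(X, \Omega_X) \to \homology^0(X, \Omega_X(1)) \to \Ext^1(\strSh_{D_i}, \Omega_X) \to \homology^1(X, \Omega_X) \to \homology^1(X, \Omega_X(1)),
\]
so the proposition reduces to verifying $\homology^0(X, \Omega_X(1)) = 0$ and $\homology^1(X, \Omega_X(1)) = 0$; the target $\homology^1(X,\Omega_X)$ is already known to be $\ints$ from the preceding corollary.

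To obtain these vanishings I would invoke Proposition~\ref{proposition:isoCohOmega} at $m = 1$ to transport the computation to $\projective^5$, and then twist the Euler sequence by $\strSh_{\projective^5}(1)$:
\[
0 \to \Omega_{\projective^5}(1) \to \strSh_{\projective^5}^{\oplus 6} \to \strSh_{\projective^5}(1) \to 0.
\]
The induced map on global sections $\ints^{\oplus 6} \to \homology^0(\projective^5, \strSh_{\projective^5}(1)) = \ints^{\oplus 6}$ sends the $k$-th standard basis vector to the homogeneous coordinate $x_k$ of $\projective^5$, hence is an isomorphism. Combined with the vanishing of higher cohomology of $\strSh_{\projective^5}^{\oplus 6}$ and of $\strSh_{\projective^5}(1)$, this forces $\homology^j(\projective^5,\Omega_{\projective^5}(1)) = 0$ for $j = 0, 1$, which is exactly what is needed.

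I do not expect a serious obstacle: once Proposition~\ref{proposition:isoCohOmega} is in hand, the remainder is the Ext diagram chase above paired with the explicit Euler-sequence calculation, both routine. The only mild point is confirming that Proposition~\ref{proposition:isoCohOmega} genuinely covers the twist $m = 1$; as stated it does. The same argument works uniformly in $i$, since only the fact that $H_i$ has degree $1$ (and not its specific defining linear form $l_3$, $l_4$, $l_6$, or $l_7$) enters the proof.
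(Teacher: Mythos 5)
Your proposal is correct and follows essentially the same route as the paper: both use the sequence $0 \to \strSh_X(-1) \to \strSh_X \to \strSh_{D_i} \to 0$, apply $\Hom_X(-,\Omega_X)$, and reduce to the vanishing of $\homology^0(X,\Omega_X(1))$ and $\homology^1(X,\Omega_X(1))$, obtained via Proposition~\ref{proposition:isoCohOmega} and the twisted Euler sequence on $\projective^5$. Your write-up just makes explicit the Euler-sequence computation that the paper leaves as a one-line assertion.
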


\begin{proof}
Write $D = D_i$.
From the Euler exact sequence, we get that
$\homology^0(\projective^5,\Omega_{\projective^5}(1)) = 
\homology^1(\projective^5,\Omega_{\projective^5}(1)) = 0$.
Hence Proposition~\ref{proposition:isoCohOmega} gives that
$\homology^0(X,\Omega_X(1)) = 
\homology^1(X,\Omega_X(1)) = 0$.
From the exact sequence
\[
0 \to \strSh_X(-1) \to \strSh_X \to \strSh_D \to 0
\]
we get the exact sequence
\[
\Hom_X (\strSh_X(-1), \Omega_X) \to 
\Ext^1(\strSh_D,\omega_X) \to \Ext^1(\strSh_X,\omega_X)
\to \Ext^1(\strSh_X(-1),\omega_X).
\]
Since the first and the last terms are zero, we get the isomorphism.
\end{proof}

Fix a generator
\[
0 \to \Omega_X \to \calF \stackrel{s}\to  \strSh_X \to 0
\]
of $\Ext^1(\strSh_X,\omega_X)$.
(Note that $s$ is a global section of $\calF^*$.)
Let 
\begin{equation}
\label{equation:defFD}
0 \to \Omega_X \to \calF_{D_i} \stackrel{s_i} \to \strSh_{D_i} \to 0
\end{equation}
be its inverse image in $\Ext^1(\strSh_{D_i},\omega_X)$.

In what follows, for a chain complex $L_\bullet$, 
$\bfh_j(L_\bullet)$ denotes the $j$th homology of $L_\bullet$, i.e., 
\[
\frac{\ker (L_j \to L_{j-1})} {\image (L_{j+1} \to L_{j})}.
\]

\begin{lemma}
\label{lemma:tensorFD}
Let $1 \leq i \leq 4$.
Let $L_\bullet$ be a bounded complex such that 
$\Tor_k(L_j, \strSh_{D_i}) = 0$ for all $j$ and $k>0$.
Write $L'_\bullet$ for the 
total complex of $L_\bullet \otimes (\calF_{D_i} \to \strSh_{D_i})$.
Then 
$\bfh_j(L'_\bullet) = \bfh_{j-1}(L_\bullet) \otimes \Omega_X$.
\end{lemma}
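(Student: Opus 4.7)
The plan is to realize $L'_\bullet$ as the total complex of the two-column double complex $E_{\bullet,\bullet}$ with $E_{1,q} = L_q \otimes \calF_{D_i}$ and $E_{0,q} = L_q \otimes \strSh_{D_i}$, equipped with horizontal differential $\mathrm{id}_{L_q} \otimes s_i$ and vertical differential $\partial_L \otimes \mathrm{id}$, and then to run the spectral sequence obtained by first taking homology in the horizontal direction.

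For each fixed $q$, the $q$th row is a two-term complex, and tensoring the short exact sequence~\eqref{equation:defFD} by $L_q$ yields
\[
0 \to L_q \otimes \Omega_X \to L_q \otimes \calF_{D_i} \to L_q \otimes \strSh_{D_i} \to 0,
\]
which is exact on the left thanks to the hypothesis $\Tor_1(L_q, \strSh_{D_i}) = 0$. Hence the row homology is concentrated in column $p = 1$, giving $E^1_{1,q} = L_q \otimes \Omega_X$ and $E^1_{0,q} = 0$. Passing to $E^2$ by taking vertical homology, the induced differential on $E^1_{1,\bullet}$ is $\partial_L \otimes \mathrm{id}_{\Omega_X}$; since $X$ is smooth, $\Omega_X$ is locally free and hence flat over $\strSh_X$, so tensoring with $\Omega_X$ commutes with taking homology and produces $E^2_{1,q} = \bfh_q(L_\bullet) \otimes \Omega_X$. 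With only one surviving column, the spectral sequence degenerates at $E^2$, and $\bfh_j(L'_\bullet) = E^\infty_{1,j-1} = \bfh_{j-1}(L_\bullet) \otimes \Omega_X$.

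There is no serious obstacle: the argument is essentially bookkeeping for a two-column double complex whose rows become short exact after a single Tor vanishing. The points to watch are that $\Tor_1(L_j, \strSh_{D_i}) = 0$ is exactly what is needed to make each row short exact (so the $E^1$ page has only one nonzero column), and that the flatness of $\Omega_X$ is what permits the identification $\bfh_q(L_\bullet \otimes \Omega_X) = \bfh_q(L_\bullet) \otimes \Omega_X$ at $E^2$.
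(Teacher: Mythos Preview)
Your proof is correct and follows essentially the same route as the paper: both set up the two-column double complex, take homology first in the direction of the short complex $\calF_{D_i}\to\strSh_{D_i}$ (using the $\Tor_1$ vanishing to identify the kernel with $L_q\otimes\Omega_X$), and then observe that the spectral sequence degenerates with a single nonzero column. Your version is slightly more explicit in invoking the local freeness of $\Omega_X$ to pass from $\bfh_q(L_\bullet\otimes\Omega_X)$ to $\bfh_q(L_\bullet)\otimes\Omega_X$, which the paper leaves implicit in its statement that $L'_\bullet$ is quasi-isomorphic to $L_\bullet\otimes\Omega_X[-1]$.
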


\begin{proof}
$L_\bullet \otimes (\calF_{D_j} \to \strSh_{D_j})$
gives a third-quadrant double complex 
\[
E^{-p,-q} 
\begin{cases}
L_p \otimes \strSh_{D_j}, & q = 0
\\
L_p \otimes \calF_{D_j}, & q = 1
\\
0, & \text{otherwise}.
\end{cases}
\]
Since the vertical map $E^{-p,-q} \to E^{-p,-q+1}$ are surjective 
and $\Tor_1(L_p, \strSh_{D_j}) = 0$, the
${}_vE_1$ page of the spectral sequence is
\[
{}_vE_1^{-p,-q}
\begin{cases}
L_p \otimes \Omega_X, & q = 1
\\
0, & \text{otherwise}.
\end{cases}
\]
Hence $L'_\bullet$ is quasi-isomorphic to $L_\bullet \otimes \Omega_X[-1]$.
\end{proof}

\begin{lemma}
\label{lemma:torAssoc}
Let $\calF_1, \calF_2, \calF_3$ be coherent sheaves on $X$ such that 
$\Tor_{\geq 1}^{\strSh_X} (\calF_1 , \calF_2) = 0 = \Tor_{\geq
1}^{\strSh_X} (\calF_2 , \calF_3)$.
Then for all $i$, 
then $\Tor_i^{\strSh_X} (\calF_1 \otimes \calF_2, \calF_3) = 
\Tor_i^{\strSh_X} (\calF_1 , \calF_2 \otimes \calF_3)$ for all $i$.
\end{lemma}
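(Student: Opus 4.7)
The plan is to run the standard double-complex argument for associativity of $\Tor$. Choose locally free resolutions $P_\bullet \to \calF_1$ and $Q_\bullet \to \calF_3$ on $X$, and form the first-quadrant double complex
\[
E^{-p,-q}_0 = P_p \otimes_{\strSh_X} \calF_2 \otimes_{\strSh_X} Q_q.
\]
Since each $P_p$ and each $Q_q$ is locally free, tensoring with $\calF_2$ on either side commutes with the relevant homology computations in the two directions. The total complex $\Tot(E_0)$ therefore carries two convergent spectral sequences, and the idea is to identify the $E_\infty$-pages on either side with the two Tor groups in the statement.

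First I would filter by rows (fix $p$, vary $q$). For fixed $p$, the column $P_p \otimes \calF_2 \otimes Q_\bullet$ computes $P_p \otimes_{\strSh_X} \Tor^{\strSh_X}_*(\calF_2, \calF_3)$ because $P_p$ is flat; by hypothesis this collapses to $P_p \otimes \calF_2 \otimes \calF_3$ concentrated in degree zero. The $E_1$-page is then the single row $P_\bullet \otimes (\calF_2 \otimes \calF_3)$, whose homology is $\Tor^{\strSh_X}_*(\calF_1, \calF_2 \otimes \calF_3)$, and this is the abutment from one direction.

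Symmetrically, filtering by columns, each row $P_\bullet \otimes \calF_2 \otimes Q_q$ computes $\Tor^{\strSh_X}_*(\calF_1, \calF_2) \otimes Q_q$ (using that $Q_q$ is flat), which collapses to $\calF_1 \otimes \calF_2 \otimes Q_q$ in degree zero by the other vanishing hypothesis. Taking homology in the $Q$-direction of $(\calF_1 \otimes \calF_2) \otimes Q_\bullet$ yields $\Tor^{\strSh_X}_*(\calF_1 \otimes \calF_2, \calF_3)$. Since both spectral sequences converge to the homology of the same total complex, we obtain the claimed isomorphism for every $i$.

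The only real subtlety is making sure the vanishing hypotheses are used \emph{exactly} where needed to force the two spectral sequences to degenerate at the $E_2$-page into a single row/column; with that in place the identification of the limits is automatic. There is no obstacle beyond bookkeeping, since working locally (or Zariski-locally on $X$) reduces all of this to the classical double-complex proof of associativity of $\Tor$ over a commutative ring.
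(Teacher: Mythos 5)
Your proof is correct and is essentially the paper's argument: both form the triple tensor product complex $\calL_1 \otimes \calF_2 \otimes \calL_3$ of two locally free resolutions with $\calF_2$ in the middle, and use the two $\Tor$-vanishing hypotheses to identify its homology with each of the two $\Tor$ groups. The paper phrases the collapse via quasi-isomorphisms of flat complexes rather than via the two spectral sequences, but this is only a cosmetic difference.
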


\begin{proof}
Let $\calL_1$ and $\calL_3$ be locally free resolutions of $\calF_1$ and
$\calF_3$ respectively.
Then we have an isomorphism
$(\calL_1 \otimes \calF_2) \otimes \calL_3 \to
\calL_1 \otimes (\calF_2 \otimes \calL_3)$.
By hypothesis, 
$\calL_1 \otimes \calF_2$ is quasi-isomorphic to 
$\calF_1 \otimes \calF_2$, so 
$\bfh_*((\calL_1 \otimes \calF_2) \otimes \calL_3) = 
\Tor_*^{\strSh_X} (\calF_1 \otimes \calF_2, \calF_3)$.
Similarly,
$\bfh_*(\calL_1 \otimes (\calF_2 \otimes \calL_3)) = 
\Tor_*^{\strSh_X} (\calF_1, \calF_2 \otimes \calF_3)$.
\end{proof}

\begin{lemma}
\label{lemma:higherTorMulti}
Let $A \subsetneq \{1, \ldots, 4 \}$ and $b \in \{1, \ldots, 4 \} \minus
A$. Then
\[
\Tor_{\geq 1}^{\strSh_X} \left(\bigotimes_{a \in A} \calG_a,
\calG_b\right) = 0.
\]
for all $\displaystyle (\calG_1, \ldots, \calG_4) \in \prod_{i=1}^4 
\{\strSh_{D_i}, \calF_{D_i}\}$.
\end{lemma}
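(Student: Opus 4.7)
The plan is to distill the whole claim to a single regular-sequence assertion. Write $\ell_i$ for the linear form on $X$ that cuts out the divisor $D_i$ (so $\ell_i$ is one of $l_3, l_4, l_6, l_7$ of Theorem~\ref{theorem:grassDetails}). The key input is that $\ell_1, \ell_2, \ell_3, \ell_4$ form a regular sequence on $\strSh_X$ in every order: by Theorem~\ref{theorem:grassDetails} they cut $X$ down to a zero-dimensional scheme, and $X$ is smooth---hence Cohen--Macaulay---of relative dimension $4$ over $\ints$, so any such sequence can be permuted locally. Two observations reduce the lemma to this fact: (A) from the two-term locally free resolution $0 \to \strSh_X(-1) \xrightarrow{\ell_b} \strSh_X \to \strSh_{D_b} \to 0$, for any coherent sheaf $M$ on $X$ one has $\Tor^{\strSh_X}_{\geq 2}(M, \strSh_{D_b})=0$, and $\Tor^{\strSh_X}_1(M, \strSh_{D_b})=0$ iff $\ell_b$ is a non-zero-divisor on $M$; (B) from the defining extension $0 \to \Omega_X \to \calF_{D_b} \to \strSh_{D_b} \to 0$ together with local freeness of $\Omega_X$, the $\Tor$ long exact sequence gives $\Tor_{\geq 1}(M, \calF_{D_b})=0$ whenever $\Tor_{\geq 1}(M, \strSh_{D_b})=0$. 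Combining (A) and (B), the lemma reduces to the assertion ($\ast$): writing $M_A := \bigotimes_{a \in A}\calG_a$, the tuple $(\ell_b : b \notin A)$ is a regular sequence on $M_A$ in every order.

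I would prove ($\ast$) by induction on $|A|$, with $|A|=0$ handled by the first paragraph. For the inductive step, pick $a_0 \in A$, set $A' = A \setminus \{a_0\}$ and $N = M_{A'}$. The induction hypothesis applied to $N$ gives that $(\ell_{a_0}, \ell_b : b \notin A)$ is a regular sequence on $N$ in every order; in particular $\ell_{a_0}$ is $N$-regular, so by (A) and (B) we get $\Tor_{\geq 1}(N, \calG_{a_0})=0$, and the defining short exact sequence of $\calG_{a_0}$ stays exact after tensoring by $N$. If $\calG_{a_0} = \strSh_{D_{a_0}}$ this gives $M_A = N/\ell_{a_0} N$, and ($\ast$) for $M_A$ is immediate from the definition of regular sequence applied to $N$, using orderings that place $\ell_{a_0}$ first. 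If $\calG_{a_0} = \calF_{D_{a_0}}$ one obtains instead
\[
0 \to N \otimes \Omega_X \to M_A \to N/\ell_{a_0} N \to 0,
\]
whose outer terms satisfy ($\ast$)---the left because $\Omega_X$ is locally free, the right by the preceding case---and a snake-lemma argument applied inductively on the length of $(\ell_b : b \notin A)$ propagates the regular-sequence property to the middle term $M_A$.

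The main subtlety is that the naive induction hypothesis ``$\ell_b$ is $M_A$-regular'' is not strong enough: in the $\strSh_{D_{a_0}}$ case one needs $\ell_b$ to remain regular after killing $\ell_{a_0}$, which amounts to $(\ell_{a_0}, \ell_b)$ being an $N$-regular sequence rather than each element being individually a non-zero-divisor on $N$. Strengthening to the full regular-sequence statement in ($\ast$) absorbs this difficulty, and once this strengthening is in place the whole argument is iterated snake-lemma diagrams together with the two bookkeeping reductions (A) and (B).
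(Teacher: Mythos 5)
Your argument is correct, but it takes a genuinely different route from the paper's. The first reduction is the same in both: pass from $\calG_b=\calF_{D_b}$ to $\calG_b=\strSh_{D_b}$ using that the kernel $\Omega_X$ of $\calF_{D_b}\to\strSh_{D_b}$ is locally free. After that the paper inducts on the pair $(|A|,\delta_A)$, where $\delta_A$ counts the factors equal to $\calF_{D_a}$, and drives the induction with the separate Tor--associativity statement of Lemma~\ref{lemma:torAssoc}: it rewrites $\Tor_i\bigl(\bigotimes_{a\in A}\calG_a,\strSh_{D_b}\bigr)$ as $\Tor_i\bigl(\calF_{D_{\tilde a}},\bigotimes_{a\neq\tilde a}\calG_a\otimes\strSh_{D_b}\bigr)$ for a suitable $\tilde a$, thereby lowering $\delta$. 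You instead strengthen the induction hypothesis to the regular-sequence assertion $(\ast)$ about the full complementary tuple $(\ell_b : b\notin A)$ and peel off one tensor factor at a time with short exact sequences and the snake lemma, never invoking associativity of Tor. Both proofs rest on the same base-case input, namely that $l_3,l_4,l_6,l_7$ form a regular sequence on $\strSh_X$ in every order because $X$ is Cohen--Macaulay and they cut it down to relative dimension $0$. What your version buys is self-containedness and transparency: it isolates the real content (the complementary linear forms stay regular on every mixed tensor product $M_A$), and correctly identifies why the naive hypothesis is too weak; the cost is carrying the stronger statement $(\ast)$ through the induction. One small point to make explicit in a final write-up: when you deduce that $(\ell_b : b\notin A)$ is regular on $N=M_{A'}$ (needed for the $N\otimes\Omega_X$ term), you should choose the ordering of the induction hypothesis that places $\ell_{a_0}$ last, so the sub-tuple is an initial segment; an arbitrary sub-tuple of a regular sequence need not be regular, so the ``in every order'' strengthening is doing real work there as well.
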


\begin{proof}
We may assume that $\calG_b = \strSh_{D_b}$,
since for every quasi-coherent
sheaf $\calG$, $\Tor_{\geq 1}^{\strSh_X} (\calG, \calF_{D_b})
\subseteq
\Tor_{\geq 1}^{\strSh_X} (\calG, \strSh_{D_b})$ by~\eqref{equation:defFD}.
We proceed by induction on $(|A|, \delta_A)$, where 
$\delta_A = \left|\{ a \in A \mid \calG_a = \calF_{D_a} \}\right|$. 
We order such pairs lexicographically.

Suppose that $\delta_A = 0$. 
Since the coordinate ring of $X$ is Cohen-Macaulay, the homogeneous
polynomials $l_3, l_4, l_6, l_7$ form a regular sequence in any order.
Therefore, irrespective of $|A|$, the assertion of the lemma follows.
When $(|A|, \delta_A) = (1,1)$, 
we use the fact that by~\eqref{equation:defFD},
\[
\Tor_{\geq 1}^{\strSh_X} (\calF_{D_a},\strSh_{D_b} )
\subseteq
\Tor_{\geq 1}^{\strSh_X} (\strSh_{D_a}, \strSh_{D_b})
\]
which is zero by the case $\delta_A = 0$.

Now assume that $|A| > 1$ and $\delta_A > 0$.
Let $a_1 \in A$ be such that $\calG_a = \calF_{D_a}$ for some 
$a \in A' := A \minus \{a_1 \}$.
By induction on $|A|$,
\[
\Tor_{\geq 1}^{\strSh_X} \left(\bigotimes_{a \in A'} \calG_a,
\calG_{a_1}\right) = 0
\quad\text{and}\quad
\Tor_{\geq 1}^{\strSh_X} \left(\calG_{a_1}, \strSh_{D_b}\right) = 0
\]
Hence by Lemma~\ref{lemma:torAssoc}, 
\[
\Tor_i^{\strSh_X} \left(\bigotimes_{a \in A}\calG_a, \strSh_{D_b} \right) = 
\Tor_i^{\strSh_X} \left(\bigotimes_{a \in A'}\calG_a, 
\calG_{a_1} \otimes \strSh_{D_b}\right)
\]
for all $i$.
Repeating this, we find $\tilde a \in A$ such that $\calG_{\tilde a} =
\calF_{D_{\tilde a}}$ and such that
\[
\Tor_i^{\strSh_X} \left(\bigotimes_{a \in A}\calG_a, \strSh_{D_b} \right) = 
\Tor_i^{\strSh_X} \left(\calF_{D_{\tilde a}}, 
\bigotimes_{\substack{a \in A\\ a \neq \tilde a}}
\calG_a \otimes \strSh_{D_b} \right) = 0
\]
for all $i \geq 1$, since 
$\delta_{(A \setminus \{\tilde a\}) \cup \{b\}} = \delta_A-1$.
\end{proof}

\begin{proposition}
\label{proposition:exactSeqalphaDivs}
$\Tot \left ( \bigotimes_{k=1}^4 (\calF_{D_k} \stackrel{s_i}\to 
\strSh_{D_k})\right)$ 
gives a six-term exact sequence of the form
\[
\alpha : \quad
0 \to \Omega_X^{\otimes 4} \to \cdots \to 
\bigoplus_{i=0}^4 
\mathcal{F}_i|_{D_1\cap \cdots  \widehat{D_i} \cdots \cap D_4}
\to
\strSh_{D_1 \cap \cdots \cap D_4} \to 0
\]
\end{proposition}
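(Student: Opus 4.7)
The plan is to view the displayed total complex as a chain complex $T_\bullet^{(4)}$ supported in homological degrees $0, \ldots, 4$ (with $\calF_{D_k}$ placed in degree $1$ and $\strSh_{D_k}$ in degree $0$ in each two-term factor), to compute its homology, and to package the result as the advertised six-term exact sequence $\alpha$.

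Concretely, I would prove by induction on $1 \leq m \leq 4$ that
\[
T_\bullet^{(m)} := \Tot\bigl(\bigotimes_{k=1}^m (\calF_{D_k} \to \strSh_{D_k})\bigr)
\]
has $\bfh_m(T_\bullet^{(m)}) = \Omega_X^{\otimes m}$ and vanishing homology in all other degrees. The base case $m=1$ is exactly~\eqref{equation:defFD}. For the inductive step, apply Lemma~\ref{lemma:tensorFD} with $L_\bullet = T_\bullet^{(m-1)}$ and $i = m$: each term $L_j$ is a direct sum of sheaves of the form $\bigotimes_{k \in A} \calG_k$ with $A \subseteq \{1, \ldots, m-1\}$ and $\calG_k \in \{\strSh_{D_k}, \calF_{D_k}\}$, so the hypothesis $\Tor_{\geq 1}^{\strSh_X}(L_j, \strSh_{D_m}) = 0$ is exactly Lemma~\ref{lemma:higherTorMulti} (with $b = m$; note $A \subsetneq \{1, \ldots, 4\}$ since $m \leq 4$). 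The conclusion of Lemma~\ref{lemma:tensorFD} gives $\bfh_j(T_\bullet^{(m)}) = \bfh_{j-1}(T_\bullet^{(m-1)}) \otimes \Omega_X$, and combined with the inductive hypothesis this completes the step.

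Specializing to $m = 4$, the complex $T_\bullet^{(4)}$ is acyclic except in degree $4$, where its homology equals $\Omega_X^{\otimes 4}$. Letting $\Omega_X^{\otimes 4} \hookrightarrow T_4^{(4)}$ denote the inclusion of the kernel of $T_4^{(4)} \to T_3^{(4)}$ and reading off the low-degree terms, one obtains a six-term exact sequence. The degree-$0$ term is $\bigotimes_{k=1}^4 \strSh_{D_k} = \strSh_{D_1 \cap \cdots \cap D_4}$ (again by the Tor-vanishing of Lemma~\ref{lemma:higherTorMulti} with $\delta_A = 0$), and the degree-$1$ term is $\bigoplus_{i=1}^4 \calF_{D_i}|_{\bigcap_{j \neq i} D_j}$; this is precisely the sequence $\alpha$.

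I do not anticipate any genuine obstacle here: the essential Tor computations and the spectral-sequence argument have already been carried out in Lemmas~\ref{lemma:tensorFD} and~\ref{lemma:higherTorMulti}, and what remains is a straightforward induction together with the identification of the two low-degree terms of the total complex. The only real bookkeeping is to verify at each inductive stage that the hypothesis of Lemma~\ref{lemma:tensorFD} still holds, and this is exactly why Lemma~\ref{lemma:higherTorMulti} is stated in the generality it is, covering every mixture of $\strSh_{D_a}$ and $\calF_{D_a}$ factors that can arise as a summand of $T_\bullet^{(m-1)}$.
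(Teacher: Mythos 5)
Your proposal is correct and follows essentially the same route as the paper: an induction on the number of tensor factors, applying Lemma~\ref{lemma:tensorFD} at each stage with the Tor-vanishing hypothesis supplied by Lemma~\ref{lemma:higherTorMulti}, and then reading off the end terms of the resulting acyclic total complex. The only difference is presentational — you spell out the identification of the degree-$0$ and degree-$1$ terms a little more explicitly than the paper does.
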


\begin{proof}
We prove this by repeated application of Lemma~\ref{lemma:tensorFD}.
Let $L^{(1)}_\bullet = 0 \to \calF_{D_1} \to \strSh_{D_1} \to 0$
and, define, for $2 \leq i \leq 4$,
Let $L^{(i)}_\bullet =  \Tot
(L^{(i-1)}_\bullet \otimes (\calF_{D_i} \to \strSh_{D_i}))$.
Note that for each $1 \leq i \leq 3$, 
$L^{(i)}_\bullet$ satisfies the hypotheses of 
Lemma~\ref{lemma:tensorFD}: Firstly, for each $j$, 
$L^{(i)}_j$ is a tensor product of a few $\calF_{D_k}$ and a few
$\strSh_{D_k}$, so we can use Lemma~\ref{lemma:higherTorMulti}.
Secondly
by induction on $i$, we see that $\bfh_j(L^{(i)}_\bullet) = 0$ unless
$j=i$, and $\bfh_i(L^{(i)}_\bullet) \simeq \Omega_X^{\otimes i}$.
\end{proof}

\begin{proposition}
\label{proposition:exactSeqbetaX}
$\Tot \left ( \bigotimes_{k=1}^4 (\calF \stackrel s \to \strSh_X)\right)$ 
gives a six-term exact sequence of the form
\[
\beta : \quad
0 \to \Omega_X^{\otimes 4} \to \cdots \to 
\bigoplus_{i=0}^4 \mathcal{F} \to
\strSh_X \to 0.
\]
\end{proposition}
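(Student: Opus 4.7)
The plan is to mimic the proof of Proposition~\ref{proposition:exactSeqalphaDivs} with $\strSh_X$ and $\calF$ playing the roles of $\strSh_{D_i}$ and $\calF_{D_i}$. The proof should simplify considerably, because $\strSh_X$ is obviously locally free and $\calF$ sits in the exact sequence $0 \to \Omega_X \to \calF \to \strSh_X \to 0$ of locally free sheaves on the smooth variety $X$, so $\calF$ is locally free too. Consequently, every Tor-vanishing hypothesis invoked along the way is automatic, and we can bypass the analogue of Lemma~\ref{lemma:higherTorMulti} entirely.

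First I would observe that the two-term complex $C_\bullet := \bigl(\calF \stackrel{s}\to \strSh_X\bigr)$, with $\calF$ in homological degree $1$ and $\strSh_X$ in degree $0$, has $\bfh_1(C_\bullet) = \ker s = \Omega_X$ and $\bfh_0(C_\bullet) = 0$, so $C_\bullet$ is quasi-isomorphic to $\Omega_X[1]$.

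Next, set $L^{(1)}_\bullet := C_\bullet$ and, for $2 \leq k \leq 4$, define $L^{(k)}_\bullet := \Tot\bigl(L^{(k-1)}_\bullet \otimes C_\bullet\bigr)$. At every stage, each $L^{(k)}_j$ is a direct sum of terms of the form $\calF^{\otimes a} \otimes \strSh_X^{\otimes b}$, hence locally free and a fortiori flat. The proof of Lemma~\ref{lemma:tensorFD}, with $\strSh_X$ in place of $\strSh_{D_i}$ and $\calF$ in place of $\calF_{D_i}$, then goes through verbatim and yields $\bfh_j(L^{(k)}_\bullet) = \bfh_{j-1}(L^{(k-1)}_\bullet) \otimes \Omega_X$. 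Induction then shows that $L^{(4)}_\bullet$ has homology concentrated in degree $4$, where it equals $\Omega_X^{\otimes 4}$.

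Finally, reading off the terms of the totalization in the standard way, in total degree $j$ we find $\binom{4}{j}$ summands, each isomorphic (after absorbing the unit factors $\strSh_X$) to $\calF^{\otimes j}$; the degree $0$ end is the single copy $\strSh_X$ and the degree $4$ end is $\calF^{\otimes 4}$, whose kernel is exactly the homology $\Omega_X^{\otimes 4}$. Splicing gives the desired six-term exact sequence $\beta$. I do not expect a genuine obstacle: once Tor-vanishing is observed to be free of charge from local freeness, the entire argument reduces to the bookkeeping of the total complex already carried out in Proposition~\ref{proposition:exactSeqalphaDivs}.
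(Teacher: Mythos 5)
Your proof is correct and follows essentially the same route as the paper: the paper's own proof is just the observation that $\bfh_j(\Tot(L_\bullet \otimes (\calF \to \strSh_X))) = \bfh_{j-1}(L_\bullet)\otimes\Omega_X$ (the Tor hypotheses of Lemma~\ref{lemma:tensorFD} being automatic since $\calF$, as an extension of $\strSh_X$ by $\Omega_X$, is locally free), iterated four times. Your write-up simply makes that local-freeness remark and the resulting induction explicit.
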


\begin{proof}
Let $L_\bullet$ be a bounded complex.
Write $L'_\bullet 
= \Tot(L_\bullet \otimes (\calF \stackrel s\to \strSh_X))$.
Then 
$\bfh_j(L'_\bullet) = \bfh_{j-1}(L_\bullet) \otimes \Omega_X$.
Now repeatedly apply this to 
$\Tot \left ( \bigotimes_{k=1}^r (\calF \stackrel s \to \strSh_X)\right)$,
$2 \leq r \leq 4$.
\end{proof}

We now obtain the following diagram of complexes, where the 
first row is $\alpha$ from
Proposition~\ref{proposition:exactSeqalphaDivs}
the second row is $\beta$ from Proposition~\ref{proposition:exactSeqbetaX}
and the last row $\gamma$ is the \emph{exact} 
Koszul complex for the map $\calF \stackrel s \to \strSh_X$.
\[
\xymatrix@C=4ex{
\alpha : & 
0 \ar[r] & \Omega_X^{\otimes 4} \ar[r] & \bigotimes_{i=0}^4 \mathcal{F}_i
\ar[r]  & \cdots \ar[r] & \bigoplus_{i=0}^4 \mathcal{F}_i|_{D_1\cap \cdots
\widehat{D_i} \cdots \cap D_4} \ar[r]  &  \strSh_{D_1 \cap \cdots \cap
D_4}\ar[r] & 0
\\
\beta : & 
0 \ar[r] & \Omega_X^{\otimes 4} \ar[r]\ar[d]^\mu \ar@{=}[u] & \bigotimes^4
\mathcal{F} \ar[r]\ar[d]\ar[u]  & \cdots \ar[r] & \bigoplus^4 \mathcal{F}
\ar[r]\ar[d]\ar[u]  &  \strSh_X\ar[r]\ar@{=}[d]\ar[u]^\pi & 0 
\\
\gamma : & 
0 \ar[r] & \omega_X \ar[r] & \bigwedge^4 \mathcal{F} \ar[r]  & \cdots
\ar[r] & \bigwedge^1 \mathcal{F} \ar[r]  &  \strSh_X\ar[r] & 0 
}
\]
($\pi$ and $\mu$ are the natural maps.)
Note that 
$\alpha \in\Ext_X^4(\strSh_{D_1 \cap \cdots \cap D_4}, 
\Omega_X^{\otimes 4})$,
$\beta \in \Ext_X^4(\strSh_X,\Omega_X^{\otimes 4})$ and 
$\gamma \in \Ext_X^4(\strSh_X,\omega_X)$. 

\begin{lemma}
\label{lemma:pialaphamubeta}
$\pi^*(\alpha) = \beta$ and $\mu_*(\beta)  = \gamma$.
\end{lemma}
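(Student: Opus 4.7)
The plan is to apply the standard correspondence~\cite[Proposition~III.5.1]{MacLHomology63} (the same reference already used in Section~3) between chain maps of Yoneda extensions and the pull-back / push-out operations on $\Ext$. Concretely, given $n$-fold extensions $\xi$ of $B$ by $A$ and $\xi'$ of $B'$ by $A'$, a chain map $\xi \to \xi'$ whose left-end restriction is $f : A \to A'$ and right-end restriction is $g : B \to B'$ witnesses the equality $f_*\xi = g^*\xi'$ in $\Ext^n(B,A')$. Thus for each half of the lemma it suffices to produce an explicit chain map with the correct boundary data.

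For $\pi^*(\alpha) = \beta$, the idea is to lift the comparison sheaf-by-sheaf. By~\eqref{equation:defFD}, each $\calF_{D_i}$ is the inverse image of $\calF$ along the surjection $\strSh_X \twoheadrightarrow \strSh_{D_i}$, so by the universal property of the pull-back there is a natural map of two-term complexes $(\calF \xrightarrow{s} \strSh_X) \to (\calF_{D_i} \xrightarrow{s_i} \strSh_{D_i})$ that is the identity on the subsheaf $\Omega_X$ and the canonical surjection on the right. Tensoring these four comparison maps over $\strSh_X$ and totalizing then produces a chain map $\beta \to \alpha$ that is $\mathrm{id}_{\Omega_X^{\otimes 4}}$ on the left and $\pi$ on the right, which by the correspondence above gives $\beta = \pi^*(\alpha)$.

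For $\mu_*(\beta) = \gamma$, the chain map $\beta \to \gamma$ should be the natural antisymmetrization. In total degree $k$, the term of $\beta$ is $\bigoplus_{I \subseteq \{1,\dots,4\},\,|I|=k}\bigotimes_{i\in I} \calF$; the projection of each summand to $\bigwedge^k \calF$ by the usual skew-symmetric map, weighted by the Koszul signs arising from the totalization of a tensor product of two-term complexes, defines a map to $\gamma$. On the right this is $\mathrm{id}_{\strSh_X}$, and on the left it sends $\Omega_X^{\otimes 4} \subseteq \calF^{\otimes 4}$ to $\bigwedge^4 \Omega_X = \omega_X$ via antisymmetrization, which is exactly $\mu$. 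The correspondence then yields $\mu_*(\beta) = \gamma$.

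The main obstacle is in the second half: one must verify that the sign-weighted antisymmetrization genuinely commutes with both differentials (the Koszul contraction by $s$ on $\gamma$ and the totalization differential on $\beta$, which itself carries Koszul signs). The cleanest way to dispatch this is to exhibit $\gamma$ as the graded-commutative quotient---in the super sense, with $\calF$ declared odd and $\strSh_X$ even---of the tensor-algebra totalization $\beta$; under this identification the chain map is just the canonical projection from the tensor construction to its graded-commutative quotient, and both the compatibility with differentials and the identification of the left-end map with $\mu$ become automatic.
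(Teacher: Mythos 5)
Your proof is correct and follows essentially the same route as the paper: both exhibit morphisms of $4$-fold extensions $\beta\to\alpha$ (identity on $\Omega_X^{\otimes 4}$, $\pi$ on the right) and $\beta\to\gamma$ ($\mu$ on the left, identity on $\strSh_X$) and then invoke \cite[Proposition~III.5.1]{MacLHomology63}. The only difference is that you spell out the construction of these chain maps (the pull-back property of the $\calF_{D_i}$ and the signed antisymmetrization onto the Koszul complex), which the paper simply records in the displayed diagram preceding the lemma.
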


\begin{proof}
Use~\cite[Proposition~III.5.1]{MacLHomology63} to see that 
$\pi^*(\alpha) = (\mathrm{id}_{\Omega_X^{\otimes 4}})_*(\beta)=  \beta$
and that
$\mu_*(\beta)  = (\mathrm{id}_{\strSh_X})^*(\gamma) = \gamma$.
\end{proof}

\begin{lemma}
$\pi^*(\mu_*(\alpha)) = \mu_*(\pi^*(\alpha)) = \gamma$.
\end{lemma}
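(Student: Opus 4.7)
The plan is to reduce the entire statement to Lemma~\ref{lemma:pialaphamubeta} together with the general fact that pullback and pushout commute on Yoneda Ext. The middle equality $\mu_*(\pi^*(\alpha)) = \gamma$ is immediate: by Lemma~\ref{lemma:pialaphamubeta}, $\pi^*(\alpha) = \beta$, and applying $\mu_*$ and invoking Lemma~\ref{lemma:pialaphamubeta} again gives $\mu_*(\beta) = \gamma$.

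For the first equality $\pi^*(\mu_*(\alpha)) = \mu_*(\pi^*(\alpha))$, the observation is that for any exact sequence $\xi \in \Ext_X^n(\calF', \calG)$ and morphisms $\pi : \calF \to \calF'$ and $\mu : \calG \to \calG'$, the Yoneda operations satisfy $\pi^*(\mu_*(\xi)) = \mu_*(\pi^*(\xi))$ in $\Ext_X^n(\calF, \calG')$. This is a standard compatibility: both sides are represented by the $n$-fold extension obtained by simultaneously pushing out along $\mu$ on the left end and pulling back along $\pi$ on the right end, which can be constructed independently of the order (\textit{cf.}~\cite[Proposition~III.5.1]{MacLHomology63} and the associativity of the Yoneda product, or directly from the derived-category picture, where both operations are represented by compositions of morphisms in $\mathbf{D}(X)$).

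Applying this with $\xi = \alpha$, $\pi : \strSh_X \to \strSh_{D_1 \cap \cdots \cap D_4}$, and $\mu : \Omega_X^{\otimes 4} \to \omega_X$ the antisymmetrization map, the commutativity follows, and combining with the previous paragraph yields $\pi^*(\mu_*(\alpha)) = \gamma$ as well.

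There is essentially no obstacle here; the lemma is a formal consequence of Lemma~\ref{lemma:pialaphamubeta} once one cites the functoriality of Yoneda Ext in both arguments. The only thing to be careful about is to state explicitly which maps $\pi$ and $\mu$ are being used, since the same notation has been introduced in the diagram preceding Lemma~\ref{lemma:pialaphamubeta}.
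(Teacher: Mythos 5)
Your proposal is correct and follows essentially the same route as the paper: both reduce the statement to Lemma~\ref{lemma:pialaphamubeta} plus the general commutativity $\pi^*\mu_* = \mu_*\pi^*$ on $\Ext^n$. The only difference is that the paper spells out that commutativity with a short argument via injective resolutions (a commuting square of $\Hom$ complexes into $I_B$ and $I_D$), whereas you cite it as a standard fact with a representative-level sketch; either justification is adequate here.
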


\begin{proof}
The second equality is from Lemma~\ref{lemma:pialaphamubeta}.
To prove the first equality, let $\calA$ be an abelian category with enough
injectives. Write $\mathrm{Ch}(\calA)$ for the abelian category of chain
complexes over $\calA$.
Let $A,B, C, D$ be objects in $\calA$ and $\pi : C \to A$ and $\mu : B \to
D$ be morphisms. Let $n \in \naturals$.
We want to show that $\pi^*\mu_* = \mu_* \pi^*$ 
as maps from $\Ext_\calA^n(A,B)$ to $\Ext_\calA^n(C,D)$.
Let $I_B$ and $I_D$ be injective resolutions of $B$ and $D$ respectively.
Then the following diagram commutes
\[
\xymatrix{
\Hom_{\mathrm{Ch}(\calA)}(A,I_B) \ar[r]^{\pi^*} \ar[d]_{\mu_*} &
\Hom_{\mathrm{Ch}(\calA)}(C,I_B) \ar[d]_{\mu_*}
\\
\Hom_{\mathrm{Ch}(\calA)}(A,I_D) \ar[r]^{\pi^*}  &
\Hom_{\mathrm{Ch}(\calA)}(C,I_D)
}
\]
Taking homology we see that $\pi^*\mu_* = \mu_* \pi^*$ 
as maps from $\Ext_\calA^n(A,B)$ to $\Ext_\calA^n(C,D)$.
\end{proof}

\begin{proposition}
\label{proposition:TwoPts}
With notation as above:
\begin{enumerate}

\item
\label{proposition:TwoPts:red}
$D_1 \cap \cdots \cap D_4$ is a reduced scheme $\{P_1,P_2 \}$ where, for
$i=1,2$, $P_i \simeq \Spec \ints$.

\item
\label{proposition:TwoPts:aut}
The automorphism $\tau$ of $X$ given by $p_{i,j } \mapsto p_{k,l }$ where
$\{i,j,k,l \} = \{1, \ldots, 4 \}$ sends $P_1$ to $P_2$ and vice versa.
\end{enumerate}

\end{proposition}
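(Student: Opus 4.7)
The plan is to reduce Proposition~\ref{proposition:TwoPts} to a direct computation with the unique Plücker relation for $G_{2,4}$. First I would specialize the linear forms $l_k = \sum_{i+j=k} p_{i,j}$ to the case $m=4$: for each $k \in \{3,4,6,7\}$ there is a unique pair $(i,j)$ with $1 \leq i<j \leq 4$ and $i+j=k$, namely $l_3 = p_{1,2}$, $l_4 = p_{1,3}$, $l_6 = p_{2,4}$, and $l_7 = p_{3,4}$. Consequently, the scheme $D_1 \cap \cdots \cap D_4$ is cut out inside $\projective^5_\ints$ by the ideal generated by these four coordinates together with the single Plücker relation $p_{1,2}p_{3,4} - p_{1,3}p_{2,4} + p_{1,4}p_{2,3}$ defining $X$.

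Substituting the vanishing of the four coordinates into the Plücker relation collapses it to $p_{1,4}p_{2,3} = 0$, so the defining ideal in $S = \ints[p_{i,j}]$ is $J = (p_{1,2}, p_{1,3}, p_{2,4}, p_{3,4}, p_{1,4}p_{2,3})$. The key observation is that $J = \frakp_1 \cap \frakp_2$, where $\frakp_1 = J + (p_{1,4})$ and $\frakp_2 = J + (p_{2,3})$ are each generated by five of the six homogeneous coordinates, with quotients $\ints[p_{2,3}]$ and $\ints[p_{1,4}]$ respectively. Each $\frakp_i$ then defines a $\ints$-rational point $P_i$ of $\projective^5_\ints$ lying on $X$, and the two points are disjoint because they lie in complementary basic open affines of the Plücker embedding. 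Reducedness of $D_1 \cap \cdots \cap D_4 = P_1 \sqcup P_2$ is immediate from the localized ideals, each of which visibly cuts out a copy of $\Spec \ints$. This proves (1).

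For (2), I would verify that the coordinate involution $\tau$ with $p_{1,2} \leftrightarrow p_{3,4}$, $p_{1,3} \leftrightarrow p_{2,4}$, $p_{1,4} \leftrightarrow p_{2,3}$ fixes each term of the Plücker relation, hence descends to an automorphism of $X$ over $\ints$. This involution preserves the sub-ideal $(p_{1,2}, p_{1,3}, p_{2,4}, p_{3,4})$ setwise and exchanges $p_{1,4}$ with $p_{2,3}$, so it swaps $\frakp_1$ with $\frakp_2$ and, therefore, swaps $P_1$ and $P_2$, as required.

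I do not anticipate a substantive obstacle. The apparent subtlety of working over $\ints$ rather than a field is harmless: the ideal $J$ is generated by forms that lift tautologically from the residue field, so the decomposition into prime components is visible arithmetically, and there is no need to invoke flatness or fibrewise reducedness arguments of the kind used in Section~\ref{section:dvr}.
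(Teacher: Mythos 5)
Your proposal is correct and follows essentially the same route as the paper: both identify $l_3,l_4,l_6,l_7$ with $p_{1,2},p_{1,3},p_{2,4},p_{3,4}$, reduce the Pl\"ucker quadric modulo these to get the ideal $(p_{1,2},p_{1,3},p_{2,4},p_{3,4},p_{1,4}p_{2,3})$, and read off the two $\ints$-points and the swap under $\tau$. You merely spell out the prime decomposition $J=\frakp_1\cap\frakp_2$ that the paper leaves implicit.
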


\begin{proof}
\eqref{proposition:TwoPts:red}:
In the Pluecker embedding, $X = G_{2,4 }$ is defined inside
$\projective^5 := \Proj \ints[p_{1,2}, \ldots, p_{3,4 }]$
by the polynomial $p_{1,2 }p_{3,4 } - p_{1,3 }p_{2,4 } + p_{1,4 }p_{2,3}$.
Hence 
$D_1 \cap \cdots \cap D_4$
is defined by 
$(p_{1,2 }p_{3,4 } - p_{1,3 }p_{2,4 } + p_{1,4 }p_{2,3}, p_{1,2 },
p_{1,3}, p_{2,4}, p_{3,4}) 
=(p_{1,2 }, p_{1,3}, p_{2,4}, p_{3,4}, p_{1,4 }p_{2,3})
$.

\eqref{proposition:TwoPts:aut}:
Note that 
\[
(p_{1,2 }, p_{1,3}, p_{2,4}, p_{3,4}, p_{1,4 }) \mapsto
(p_{1,2 }, p_{1,3}, p_{2,4}, p_{3,4}, p_{2,3})
\]
 and vice versa.
\end{proof}

\begin{proposition}
$\gamma$ is an even number when we identify $\Ext_X^4(\strSh_X, \omega_X)$
with $\ints$. In particular it is not a generator of 
$\Ext_X^4(\strSh_X, \omega_X)$.
\end{proposition}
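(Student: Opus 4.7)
The plan is to exploit the $\ints/2$-symmetry of the configuration, provided by the automorphism $\tau$ of Proposition~\ref{proposition:TwoPts}\eqref{proposition:TwoPts:aut}, to show that $\gamma$ is twice an element of $\Ext_X^4(\strSh_X, \omega_X) \cong \ints$. First I would use the decomposition $\strSh_{D_1 \cap \cdots \cap D_4} = \strSh_{P_1} \oplus \strSh_{P_2}$ from Proposition~\ref{proposition:TwoPts}\eqref{proposition:TwoPts:red} to split $\alpha$ as $\alpha_1 + \alpha_2$, with $\alpha_i \in \Ext_X^4(\strSh_{P_i}, \Omega_X^{\otimes 4})$. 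Writing $\pi_i$ for the composition of $\pi$ with the projection onto $\strSh_{P_i}$, Lemma~\ref{lemma:pialaphamubeta} then yields $\gamma = \gamma_1 + \gamma_2$, where $\gamma_i := \mu_*\pi_i^*\alpha_i \in \Ext_X^4(\strSh_X, \omega_X)$.

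The crux is to show that $\gamma_1 = \gamma_2$ in $\Ext_X^4(\strSh_X, \omega_X) \cong \ints$. For this I would observe that $\tau$, being induced by a permutation of the Pluecker coordinates of $\projective^5$, preserves the Euler sequence on $\projective^5$ and hence the data $(\calF, s)$ used to build $\gamma$. Moreover, $\tau$ sends $l_3 \leftrightarrow l_7$ and $l_4 \leftrightarrow l_6$, so it permutes the four divisors $D_1, \ldots, D_4$ via the even permutation $(1,4)(2,3)$. Because this permutation is even, the symmetric structure on the fourfold tensor product $\bigotimes_{k=1}^4 (\calF_{D_k} \to \strSh_{D_k})$ introduces no Koszul sign and $\tau^*\alpha = \alpha$ as a class in $\Ext_X^4$. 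Under the decomposition, and using that $\tau$ interchanges $P_1 \leftrightarrow P_2$, this forces $\tau^*\alpha_1 = \alpha_2$; combined with $\tau^*\pi_1 = \pi_2$, we obtain $\tau^*\gamma_1 = \gamma_2$.

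To finish, I would identify $\tau^*$ with the identity on $\Ext_X^4(\strSh_X, \omega_X) \cong \ints$: by Serre duality this group is dual to $\homology^0(X, \strSh_X) = \ints$ via cup product with the trace map $\Tr_f$, and both the cup product and $\Tr_f$ are canonical under automorphisms, so $\tau^*$ fixes the generator. Hence $\gamma_1 = \gamma_2$ in $\ints$ and $\gamma = 2\gamma_1$ is even, so in particular it cannot be a generator. The main obstacle is the bookkeeping behind the claim $\tau^*\alpha = \alpha$ and the triviality of $\tau^*$ on top cohomology; both reduce to naturality considerations, but the Koszul-sign analysis for the symmetry of the fourfold tensor product, and the verification that the trace map is equivariant, require some care.
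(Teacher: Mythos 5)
Your proposal is correct and takes essentially the same route as the paper: decompose $D_1\cap\cdots\cap D_4$ into the two sections $P_1, P_2$, use $\tau^*\alpha=\alpha$ (the paper likewise notes that $\tau$ permutes the $D_i$) together with the fact that $\tau$ swaps $P_1$ and $P_2$, and conclude that the two contributions to $\gamma$ coincide, so $\gamma$ is even. The only (cosmetic) difference is the last step: the paper deduces $\mu_*\alpha = m(\varepsilon_1+\varepsilon_2)$ and then uses that each $\pi_i^*(\varepsilon_i)$ is a unit of $\ints$ (so their sum is even), whereas you invoke trace-equivariance to make $\tau^*$ the identity on $\Ext_X^4(\strSh_X,\omega_X)$ and get $\gamma=2\gamma_1$ --- both work, and your argument is robust even without that identification, since $\tau^*=-1$ would force $\gamma=0$, which is still even.
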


\begin{proof}
Note that 
$\Ext_X^4(\strSh_{D_1 \cap \cdots \cap D_4}, \omega_X)
\simeq 
\Ext_X^4(\strSh_{P_1} , \omega_X) \oplus 
\Ext_X^4(\strSh_{P_2}, \omega_X) \simeq 
\ints\varepsilon_1 \oplus \ints \varepsilon_2$.
(The $\varepsilon_i$ are basis elements.)
The automorphism $\tau$ of $X$ defined in
Proposition~\ref{proposition:TwoPts} gives an isomorphism 
$\tau : \ints\varepsilon_1 \stackrel{\simeq }\to \ints \varepsilon_2$,
with
$\varepsilon_1 \mapsto \varepsilon_2$ and
$\varepsilon_2 \mapsto \varepsilon_1$.
Write $\mu_*(\alpha) = m_1\varepsilon_1 + m_2 \varepsilon_2$.
Hence $\tau^*\mu_*(\alpha)  = m_2\varepsilon_1 + m_1 \varepsilon_2$.
We see from Proposition~\ref{proposition:TwoPts} that
$\mu_*(\tau^*\alpha) = m_2\varepsilon_1 + m_1 \varepsilon_2$.
On the other hand, note that $\tau$ permutes the divisors $D_1, \ldots,
D_4$. Hence, we see from the definition of $\alpha$ 
(Proposition~\ref{proposition:exactSeqalphaDivs}) that
$\tau^* \alpha  = \alpha$.
Therefore $m_1 = m_2$, so
$\mu_* \alpha = m_1 (\varepsilon_1 + \varepsilon_2)$.
Thus we see that it is enough to show that  
$\pi^*(\varepsilon_1 + \varepsilon_2)$ is not a generator of 
$\Ext_X^4(\strSh_X, \omega_X)$.

Write $\pi_i^*$ for the isomorphism
$\Ext_X^4(\strSh_{P_i} , \omega_X) \to 
\Ext_X^4(\strSh_X, \omega_X)$, $i=1,2$.
Then $\pi^*(\varepsilon_1 + \varepsilon_2) 
= \sum_i \pi_i^*(\varepsilon_i)$. 
Since $\pi_i$ is an isomorphism, 
$\pi_i^*(\varepsilon_i)$ 
is a generator of $\Ext_X^4(\strSh_X, \omega_X) \simeq \ints$.
Therefore $\pi^*(\varepsilon_1 + \varepsilon_2)$ is an even 
multiple of a generator of $\Ext_X^4(\strSh_X, \omega_X)$.
\end{proof}

\def\cfudot#1{\ifmmode\setbox7\hbox{$\accent"5E#1$}\else
  \setbox7\hbox{\accent"5E#1}\penalty 10000\relax\fi\raise 1\ht7
  \hbox{\raise.1ex\hbox to 1\wd7{\hss.\hss}}\penalty 10000 \hskip-1\wd7\penalty
  10000\box7}

\end{document}